\newtheoremstyle{classicthm}{12pt}{12pt}{\itshape}{}{\scshape}{:}{1em}{}
\theoremstyle{classicthm}
\newtheorem{theorem}{Theorem}[section]
\theoremstyle{classicthm}
\newtheorem{lemma}[theorem]{Lemma}
\theoremstyle{classicthm}
\newtheorem{proposition}[theorem]{Proposition}
\newtheorem{definition}[theorem]{Definition}
\newtheorem{corollary}[theorem]{Corollary}
\newtheorem{remark}[theorem]{Remark}
\newcommand{\numberset}{\mathbb}
\newcommand{\N}{\numberset{N}}
\newcommand{\R}{\numberset{R}}
\newcommand{\1}{\mathds{1}}
\renewcommand{\epsilon}{\varepsilon}
\newcommand{\abs}[1]{\left|#1\right|}
\DeclarePairedDelimiter{\norma}{\lVert}{\rVert}
\DeclareMathOperator{\scalg}{\cdot_g\!}
\def\vep{\varepsilon}
\def\de{\delta}
\def\rife#1{\eqref{#1}}
\def\cP{{\mathcal P}}
\def\into{\int_ M }
\def\be{\begin{equation}}
\def\ee{\end{equation}}
\def\vfi{\varphi}
\def\cH{{\mathcal H}}
\def\cF{\mathcal{F}}
\def\iinto{\int_0^T\!\!\!\!\into}
\numberwithin{equation}{section}
\title{Entropy-minimizing dynamical  transport on Riemannian manifolds}
\author{Gabriele Bocchi \thanks{Department of Mathematics, University of Rome Tor Vergata. Via della Ricerca Scientifica 1, 00133 Roma, Italy. \texttt{bocchi@mat.uniroma2.it}}  \and Alessio Porretta \thanks{Department of Mathematics, University of Rome Tor Vergata. Via della Ricerca Scientifica 1, 00133 Roma, Italy. \texttt{porretta@mat.uniroma2.it}}}
\date{\today}
\begin{document}

\maketitle

\begin{abstract}
Given a smooth  Riemannian manifold $(M,g)$, compact and without boundary,  we analyze the dynamical optimal mass transport problem where the cost is given by the sum of the kinetic energy and the relative entropy with respect to a reference volume measure $e^{-V}dx$. Under the only assumption  that the prescribed marginals  lie in $L^1(M)$, and a lower bound on the Ricci curvature, we characterize the minimal curves as unique weak solutions of the optimality system coupling the continuity equation with a backward Hamilton-Jacobi equation (with source given by $\log (m)$). We give evidence that the entropic cost enhances diffusive effects in the evolution of the optimal densities, proving $L^1\to L^\infty$ regularization in time for any initial-terminal data,  and smoothness of the solutions whenever the marginals are positive and smooth. We use displacement convexity arguments (in the Eulerian approach) and gradient bounds from quasilinear elliptic equations.  We also prove  the convergence of optimal curves  towards the classical Wasserstein geodesics,  as the entropic term is multiplied by a vanishing parameter, showing that  this kind of functionals can be used to build a smoothing approximation of  the standard optimal transport problem.
\end{abstract}

\section{Introduction}

Let $( M , g)$ be a smooth, connected  $d$-dimensional Riemannian manifold, assumed to be compact and  without boundary, endowed with a metric tensor $g=(g_{ij})$ and a volume form $dx$.  We denote by $\cP(M)$ the set of probability measures on $M$. Given a time horizon $T>0$ and two  fixed (initial and terminal) measures $m_0,m_1\in \cP(M)$,  we analyze in this note the optimal transport problem
 \begin{equation}\label{func}
\begin{aligned}
       &  \min\,\,  \mathcal{F}_\epsilon(m,v) \coloneqq\iinto  \frac{1}{2}\abs{v}^2\,dm+ \vep  \int_0^T \cH(m(t) ; \nu) \,, \qquad  \\ 
        & \qquad \hbox{among all }\quad (m,v)\quad \hbox{} :\begin{cases}\partial_t m -div_g(vm)=0\\m(0)=m_0\,, \,\,m(T)=m_1
        \end{cases}
        \end{aligned}
    \end{equation}
where   $\nu:= e^{-V(x)}dx$ and 
$$
\cH(m ; \nu)= \into \log\left(\frac{dm}{ d\nu} \right)dm= \iinto m(\log m+V)\,dxdt
$$
denotes the relative entropy of $m$ with respect to a reference  measure $e^{-V}dx$, given for some Lipschitz continuous vector field $V$. 

In \rife{func}, $m(t)$ is an (absolutely continuous) arc joining $m_0$ and $m_1$ with velocity $v$, $\abs\cdot$ is the length of vector fields and $div_g(\cdot)$ the intrinsic divergence operator on the manifold $M$.
%
% \begin{equation}\label{func}
% \begin{aligned}
%       &  \min \mathcal{F}_\epsilon(m,v) \coloneqq\iint_Q \frac{1}{2}\abs{v}^2\,dm+ \vep \iint_Q m(\log m+V)\,, \\ 
%        & \qquad \hbox{among all solutions}\quad (m,v)\quad \hbox{} :\begin{cases}m_t-div_g(vm)=0\\m(0)=m_0\,, \,\,m(T)=m_1
%        \end{cases},
%    \end{aligned}        
%    \end{equation}
%where $\abs\cdot$ is the length of vector fields and $div(\cdot)$ the intrinsic divergence operator on the manifold $M$. Here $V$ is a vector field and the term $\iint_Q m([\log m-1]+V)\,$ denotes the relative entropy of $m$ with respect to the volume measure $e^{-V}dx$.
\vskip0.4em
The functional \rife{func} can be seen as a perturbation of the
kinetic energy functional used in the dynamical version of  mass transportation \cite{BB}. The additional  term  in \rife{func} prevents concentration effects by penalizing the relative entropy  and  is supposed to enhance some form of dissipation  along optimal curves. This is only one, yet very natural,  among possibly different entropic regularizations of the classical optimal transport energy.  In this respect, it follows a stream of research which has been very intensive in recent times, where other kind of regularizations of the Wasserstein distance  were suggested 
%, including the nowadays very popular entropic regularization via  Schr\"odinger problem 
 (see \cite{entropic}, \cite{GiTa}, \cite{leonard}, \cite{LMS}).

The evolution of optimal transport densities with additional costs that consider the effects of  congestion  has been exploited so far in several directions, see e.g.   \cite{BCS}, \cite{BJO},  and especially \cite{LaSa}, where some $L^1-L^\infty$ regularization in the time evolution of the optimal curves was  proved using variational techniques. Similar problems were addressed  in  \cite{Carda1}, \cite{CG}, \cite{CGPT}, \cite{CMS}, \cite{GMST}, \cite{ORRIERI20191868} with a different approach based on ideas coming from mean field game theory and PDE estimates on the optimality system (state-adjoint state) associated to \rife{func}, which is 
\be\label{opsys}
\begin{cases}
    -\partial_t u+\frac{1}{2}\abs{\nabla u}^2=\epsilon (\log(m)+V(x) )\quad &\text{in $(0,T)\times M $}\\
    \partial_t m-div_g(m\nabla u)=0 &\text{in $(0,T)\times M $}\\
    m(0)=m_0, \quad m(T)=m_1 &\text{in $ M $.}
\end{cases}
\ee 
 As firstly observed by P.-L. Lions \cite{L-college},   \rife{opsys} is just one instance of PDE system appearing in mean field game theory (\cite{LL1}, \cite{LL-japan}) 
  and some smoothness on the optimal curves of  this kind of functionals can be derived 
from gradient estimates on the adjoint state $u$ (the so-called Kantorovich potential, in mass transportation  language). This approach, relying on the ellipticity hidden in the optimaliy system, was thoroughly developed in \cite{munoz1}, \cite{Mu},   \cite{Po}. 
In particular, the   case of functional \rife{func}, with the additional entropy term, was   addressed in \cite{Po} for convex domains of $\R^d$ (with no-flux condition at the boundary) assuming that the marginals $m_0, m_1$ are positive and smooth, in which case the minima can be proved to be positive and smooth for all times.
% It is so far well understood that the entropy term induces smoothness for positive marginals (and, additionally, some $L^1-L^\infty$ regularization is observed).
Similar results were also proved for Gaussian-like measures in the whole Euclidean space.
\vskip0.4em

%regularizing effect induced by additional terms penalizing congestion effects in the evolution of optimal transport densities has been exploited so far in many directions
%(LS, OPS, M); the   entropy case was specifically addressed in \cite{Po} for convex domains of Euclidean space with no-flux condition at the boundary. It is so far well understood that the entropy term induces smoothness for positive marginals (and, additionally, some $L^1-L^\infty$ regularization is observed).
%This smoothness comes from gradient estimates on the adjoint state $u$(so-called Kantorovich potentials, in pure optimal transport framework) which rely on the ellipticity
%of the optimalty system state-adjoint state connected to \rife{func}. This system is one special kind of PDE systems appearing in mean-field game theory, and the ellipticity was in fact observed by PL Lions firstly in that context.   

The goal of this paper is twofold. First of all, we give some general result on problem \rife{func}, under the only assumption that the marginals $m_0,m_1 \in L^1(M)$.
In particular, the marginals do not need to be positive (nor smooth), extending many results of \cite{Po} to the case of nonnegative initial-terminal data. Except for some special results obtained in one dimension \cite{CMP}, the case of compactly supported marginals had not been developed, so far. 

Secondly, we analyze the problem in the setting of a Riemannian manifold, in order to get a more exhaustive comprehension of some crucial tools. In fact, in the genuine optimal transport viewpoint, it is well understood (\cite{Erasquin-McCann}, \cite{Daneri_2008}, \cite{FV}, \cite{LV}, \cite{McCann2}, \cite{Renesse}, \cite{Villani-oldnew})   that the Riemannian setting is the most natural to observe the role of  Ricci curvature in regularity arguments related to displacement convexity and entropy dissipation.  In our results, we will require  the only condition that the Ricci curvature is bounded below, and we will obtain estimates which are totally consistent with the pure mass transport problem, embedding therefore the case  $\vep=0$ in \rife{func} into a family of similar problems. As an example, if $Ric(M)\geq \Lambda I$, we will prove the diplacement $\Lambda-$ convexity of the entropy along Wasserstein geodesics as the limit of $\Lambda_\vep$-convexity along the optimal curves of \rife{func}.

% for nonnegative measures, say including compactly supported marginals. Secondly,  analyzing  the problem in a Riemannian setting allows us to point out more clearly the role played naturally by Ricci curvature and Bakry-Emery condition in regularizing tools such as gradient bounds and displacement convexity. This is well exploited in the genuine optimal transport viewpoint since the celebrated papers by Otto and Villani,  AGS, etc...Last but not least, we wish to give a first convergence result of optimal curves, as $\vep \to 0$, towards the classical Wasserstein geodesics.

Concerning the characterization of minimal curves of \rife{func}, we summarize our main results in the following statement.

\begin{theorem}\label{main} Let $(M, g)$ be a smooth compact Riemannian manifold without boundary, with $Ric_g(M)$ bounded below.  Let $m_0, m_1 \in L^1( M )\cap\mathcal{P}( M )$, and assume that $V\in W^{2,\infty}( M )$, $\vep >0$. 
%Assume that $Ric(M)$ is bounded below and $V\in C^2( M )$\\%(i) $Ric(M)\geq 0$, and  $V\in W^{1,\infty}( M )$.%(ii) $Ric(M)$ is bounded below, and $D^2V+ Ric(M) \geq 0$.
Then the functional $\mathcal{F}_\epsilon$ in \rife{func} admits a unique minimum, given by $(m,\nabla u)$,  where $(m,u)$ is  the unique weak solution (in the sense of Definition \ref{def:weak_sol}) of  the system   \eqref{opsys}, with $\int_ M  u(T)m_1=0$.
%\\
    %Moreover we have that 
    %\begin{align*}
    %    \norma{m}_{L^\infty(Q)}&\le C(\norma{m_0}_\infty,\norma{m_1}_\infty, d, T, \abs{ M }, V)
    %\end{align*}
Moreover we have:
\begin{itemize}
\item[(i)] $m>0$ a.e. in $(0,T)\times M$.

\item[(ii)] $u,m\in L^\infty_{loc}((0,T)\times M)$ and $u(0)\in L^1(dm_0), u(T)\in L^1(dm_1)$.

\item[(iii)] if $m_0, m_1\in W^{1,\infty}(M)$ and are (strictly) positive, and if $V\in C^{k,\alpha}(M)$, then $u \in C^{k+1,\alpha}((0,T)\times M), m\in C^{k,\alpha}((0,T)\times M)$.  
\end{itemize}
%$(u,m)$ are smooth in $(0,T)\times M$. 
%Finally $\min \mathcal{F}=\lim_{\epsilon\rightarrow0} \min \mathcal{F}_\epsilon$, where the $\mathcal{F}_\epsilon$ are the functionals defined by \eqref{functional}.
\end{theorem}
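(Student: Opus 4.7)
The plan is to combine the direct method in the Benamou-Brenier formulation with an approximation/a priori estimates strategy on the coupled system \eqref{opsys}, in the spirit of the mean field game approach of \cite{Po, Mu, munoz1}, transposed to the Riemannian setting by means of the Ricci lower bound. First, I would pass to the variables $(m,E)$ with $E=vm$, so that $\cF_\epsilon$ becomes jointly convex and lower semicontinuous for weak-$\ast$ convergence of measures, and the admissible set is weakly-$\ast$ closed. Finiteness of the infimum is checked on a smooth positive competitor (for instance a heat-regularized linear interpolation between $m_0$ and $m_1$), and the direct method then yields a minimizer; strict convexity in $(m,E)$ together with strict convexity of $s \mapsto s\log s$ gives uniqueness. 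Convex duality, with dual problem $\sup\{\int_M u(T)\,dm_1 - \int_M u(0)\,dm_0\}$ over subsolutions of $-\partial_t u + \tfrac12|\nabla u|^2 \le \epsilon(\log m + V)$, then forces $v = \nabla u$ $\,dm$-a.e.\ and the Hamilton-Jacobi equation with source $\epsilon\log m$, the normalization $\int_M u(T)\,m_1 = 0$ eliminating the free constant; this identifies the minimum with $(m,\nabla u)$ for a weak solution of \eqref{opsys} in the sense of Definition \ref{def:weak_sol}.

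The core of the proof is the regularity (i)-(ii), which I would obtain by approximation. I would regularize $m_0, m_1$ into smooth strictly positive $m_0^n, m_1^n$ via heat-kernel convolution on $M$, producing smooth minimizers $(m^n, u^n)$ of the associated smooth-marginal problem that solve \eqref{opsys} classically. The crucial step is the uniform interior bound $\|m^n\|_{L^\infty([\delta, T-\delta]\times M)} \le C(\delta)$, which I would establish through an Eulerian displacement-convexity identity for the entropy along optimal curves: under $Ric_g \ge \Lambda$, the Bochner-Bakry-\'Emery inequality makes $t \mapsto \cH(m^n(t); \nu)$ $\Lambda_\epsilon$-semiconvex along \eqref{opsys}; combined with the boundary entropy bound coming from the energy estimate, this controls $\cH(m^n(t); \nu)$ uniformly for $t \in (0, T)$, and a Bernstein-type gradient estimate on the quasilinear Hamilton-Jacobi equation with source $\log m^n + V$ upgrades it to $L^\infty_{loc}$ bounds on both $m^n$ and $\nabla u^n$. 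The boundary integrability $u^n(0) \in L^1(dm_0^n)$ and $u^n(T) \in L^1(dm_1^n)$ then follows from the energy-duality identity.

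Passing to the limit $n \to \infty$ along these uniform estimates produces a weak solution of \eqref{opsys} with the stated regularity; strict positivity $m > 0$ a.e.\ in (i) is forced by the fact that $u \in L^\infty_{loc}$ is incompatible with $\log m$ diverging to $-\infty$ on a set of positive interior measure, and uniqueness of weak solutions in this class follows once more from strict convexity of $\cF_\epsilon$ restricted to weak solutions. For (iii), the smooth positive marginals make approximation unnecessary: positivity and smoothness are preserved along \eqref{opsys}, and the $C^{k+1,\alpha}$ (respectively $C^{k,\alpha}$) regularity for $u$ and $m$ comes from a standard Schauder bootstrap on the coupled system, treating $\log m$ as a smooth source. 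The main obstacle I foresee is precisely the uniform $L^\infty_{loc}$ bound on $m^n$: transposing the displacement-convexity argument of \cite{Po} from convex Euclidean domains to a general Riemannian manifold requires absorbing the curvature terms produced by commuting gradients in the Bochner formula via the lower Ricci bound, and ensuring that the resulting constants are insensitive to the approximation and to the possible vanishing of the marginals.
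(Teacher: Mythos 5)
Your proposal has the right overall architecture (heat-kernel regularization of the marginals, a priori estimates on smooth solutions of \eqref{opsys}, and a stability argument to pass to the limit), and your treatment of existence/uniqueness, of part (iii) via Schauder bootstrap, and of the positivity from $\log m \in L^1_{loc}$ are essentially what the paper does. However, there is a genuine gap in the step you yourself flag as crucial, the uniform interior bound $\|m^n\|_{L^\infty_{loc}} \le C$.

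Your chain is: Bochner--Bakry--\'Emery gives $\Lambda_\epsilon$-semiconvexity of $t\mapsto \cH(m^n(t);\nu)$; together with the boundary entropy from the energy estimate this controls $\cH(m^n(t);\nu)$ uniformly on $(0,T)$; then a Bernstein estimate on the Hamilton--Jacobi equation with source $\log m^n + V$ upgrades this to $L^\infty_{loc}$. This does not close. First, the ``boundary entropy bound'' is not uniform in $n$: $m_0, m_1$ are only in $L^1(M)$, so $\cH(m_0^n;\nu)$ and $\cH(m_1^n;\nu)$ may blow up as $n\to\infty$, and semiconvexity alone transports that blow-up into the interior. What actually provides an $n$-uniform interior entropy bound is the Fisher-information term $\epsilon\int m^{-1}|\nabla m|^2$ that appears on the right side of the displacement-convexity inequality; this produces the comparison ODE $\varphi'' \ge \epsilon c_4 e^{2\varphi/d} - c_5$ whose supersolution blows up at $t=0,T$, yielding a bound $\varphi(t) \lesssim d|\log(t(T-t))| + \tfrac d2|\log\epsilon| + L$ independent of the marginals --- this mechanism is missing from your sketch. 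Second, and more importantly, the paper explicitly warns that the $L^\infty$ control on $m$ \emph{cannot} be deduced from displacement convexity here: when $Ric_g$ is merely bounded below (possibly negative), the semiconvexity constant $\Lambda_\epsilon$ can be negative and the argument of \cite{Po} breaks down. In the paper the $L^\infty_{loc}$ bound on $m$ is obtained by a completely different route: (a) the universal upper bound on $\min\cF_\epsilon$ (Proposition \ref{prop:exist_comp}) yields, by a duality argument with heat-flow-generated competitors, the interior two-sided bound $-\hat C/t \le \hat u \le \hat C/(T-t)$ of Lemma \ref{stime}, which is independent of $m$; (b) this feeds into the local Bernstein-type estimate of Proposition \ref{prop:local_bound_m} on the quasilinear elliptic equation \eqref{elliptic:short} in the time-space variables (not the first-order HJ equation), giving $\epsilon(\log m + V) + \kappa|\nabla u|^2 \le K(1/(t-a)^2 + 1/(b-t)^2)$ with $K$ depending only on $\|u\|_{L^\infty((a,b)\times M)}$, hence an upper bound on $\log m$ that bypasses the entropy entirely. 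Displacement convexity is used in the paper, but for a different purpose: the Fisher-information estimate of Lemma \ref{lemma:bound:nabla_m}, needed to get strong $L^1$-compactness of $m^n$ and pass to the limit in the nonlinearity $\log m^n$. You should therefore decouple the $L^\infty$ bound on $m$ from the displacement convexity argument and derive it instead from the elliptic gradient estimate and the interior $L^\infty$ bound on $u$.
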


The statement of Theorem \ref{main} summarizes several different results that we establish later. In fact, we will start from the case of smooth and positive marginals (see Theorem \ref{smoothex}), proving the existence of smooth solutions to the system \eqref{opsys}. To this purpose, we follow the strategy suggested by P.-L. Lions \cite{L-college}, and developed in \cite{Mu}, \cite{Po}, which consists in rewriting the system \rife{opsys} as a quasilinear elliptic equation for $u$ (see \rife{elliptic:long}) and using the continuity method, relying on gradient bounds, in order to produce a smooth solution $u$. Following \cite{Po}, this strategy is first employed for a penalized auxiliary problem \rife{mfg-de}, where the $L^\infty-$ norm of $u$ is readily controlled.  Then a compactness argument yields a smooth solution $(m,u)$ after a suitable normalization of $u$.  

Once we have built smooth solutions of \eqref{opsys}, we will obtain all relevant estimates which remain robust for merely $L^1$ marginals $m_0, m_1$.  
This step includes a bound on the minimal value of $\cF_\vep$, obtained by building suitable competitors, which in turn will yield local uniform bounds on  $u$. 

From the local bounds on $u$ we will also obtain the local $L^\infty$- bound on $m$.  Notice that this $L^1-L^\infty$ regularization on the density is not a straightforward extension from the euclidean case, because the Ricci curvature  is allowed to be negative. In particular, the $L^\infty$ control on $m$ does not follow directly from the displacement convexity inequalities as in \cite{Po}. 

Finally,  we will  derive local (in time) bounds for $Du$ in $L^2$ (from the HJ equation) and for the Fisher information of $m$ (by displacement convexity estimates)  that yield the suitable compactness arguments. This latter step   includes a relaxation result on the system and the convergence to weak solutions, providing with the final characterization of the minima of \rife{func} (see Theorem \ref{weak:thm}). 

Many of those tools, involving estimates and stability on the optimality system \eqref{opsys}, are stable as $\vep \to 0$.  

Indeed,  we conclude the article by giving a result of convergence of the optimal curves towards the Wasserstein geodesic, as $\vep \to0$, as well as  the convergence of $\min \mathcal{F}_\vep $ towards $ \min \mathcal{F}_0$.   This convergence occurs with a rate $O(\vep)$ when the marginals have finite entropy, while we cannot prove a rate better than $O(\vep |\log\vep|)$ for the general case of marginals only in $L^1(M)$.

\begin{theorem}\label{main2} Under the assumptions of Theorem \ref{main}, let $(m_\vep, \nabla u_\vep)$ be the minima of \rife{func}, where $(m_\vep, u_\vep)$ solves   \eqref{opsys}, with $\int_ M  u_\vep (T)m_1=0$, and let $(m,\nabla u)$ be the Wasserstein geodesic between $m_0, m_1$. 
Then, as $\vep \to 0$, we have
\begin{align*}
m_\vep & \to m \quad \hbox{in $C^0([0,T],\cP(M))$ and  weakly in $L^1((0,T)\times M)$,}
\\ 
m_\vep \nabla u_\vep & \to m\nabla u\quad \hbox{weakly in $L^1((0,T)\times M)$,}
\end{align*}
and $\min \mathcal{F}_\epsilon\to \min \mathcal{F}_0$. In particular we have 
$$
\min \mathcal{F}_\epsilon=  \min \mathcal{F}_0 + r_\vep
$$
where $r_\vep= O(\vep)$ if $m_0, m_1$ have finite entropy, otherwise $r_\vep= O(\vep|\log \vep|)$.
\end{theorem}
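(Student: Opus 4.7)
The plan is to bracket $\min\mathcal{F}_\epsilon$ between $\min\mathcal{F}_0 - C\epsilon T$ (easy, using that $\mathcal{H}(\cdot;\nu)$ is bounded below on $\mathcal{P}(M)$) and $\min\mathcal{F}_0 + r_\epsilon$ (via explicit competitors), and then to extract compactness from the resulting uniform kinetic bound and identify the limit as the Wasserstein geodesic. The lower bound is immediate: since $x\log x\geq -1/e$ for $x\geq 0$ and $V\in L^\infty(M)$, we have $\mathcal{H}(m;\nu)\geq -C$ uniformly in $m\in\mathcal{P}(M)$, so every admissible $(m,v)$ satisfies $\mathcal{F}_0(m,v)\leq \mathcal{F}_\epsilon(m,v)+C\epsilon T$, giving $\min\mathcal{F}_\epsilon\geq \min\mathcal{F}_0-C\epsilon T$ for free.

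For the upper bound I would distinguish the two cases of the statement. If $\mathcal{H}(m_i;\nu)<\infty$ for $i=0,1$, I would take the Wasserstein geodesic $(m,\nabla u)$ itself as a competitor in $\mathcal{F}_\epsilon$. Under the assumed lower bound $Ric_g\geq \Lambda$, the classical displacement $\Lambda$-convexity of the relative entropy along Wasserstein geodesics (Cordero-Erausquin--McCann--Schmuckenschl\"ager, von~Renesse--Sturm) yields $\mathcal{H}(m(t);\nu)\leq (1-t/T)\mathcal{H}(m_0;\nu)+(t/T)\mathcal{H}(m_1;\nu)+C\,W_2^2(m_0,m_1)$, hence $\int_0^T\mathcal{H}(m(t);\nu)\,dt\leq C$, which in turn gives $\min\mathcal{F}_\epsilon\leq \min\mathcal{F}_0+C\epsilon$ and the clean rate $r_\epsilon=O(\epsilon)$.

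For general $m_0,m_1\in L^1(M)\cap\mathcal{P}(M)$ the strategy is to regularize via the heat semigroup on $(M,g)$: set $m_i^\delta := P_\delta m_i$, which satisfy the standard heat-kernel estimates $W_2^2(m_i,m_i^\delta)\leq C\delta$ and $\mathcal{H}(m_i^\delta;\nu)\leq C(1+|\log\delta|)$. One then builds a piecewise competitor by concatenating short boundary bridges $m_0\leadsto m_0^\delta$ on $[0,\tau]$ and $m_1^\delta\leadsto m_1$ on $[T-\tau,T]$ with the Wasserstein geodesic between $m_0^\delta$ and $m_1^\delta$ on the bulk interval $[\tau,T-\tau]$. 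The kinetic cost is bounded by $\min\mathcal{F}_0+C(\delta/\tau+\tau)$; the bulk entropy cost is $O(\epsilon T\,|\log\delta|)$ by displacement convexity between the smooth endpoints; on the boundary bridges the entropy is controlled by splitting $m\log m$ into its (uniformly bounded) negative part and its positive part, the latter being tamed via a heat-flow representation of the bridge combined with de~Bruijn's identity and the bound $\mathcal{H}(m_i^\delta;\nu)\leq C|\log\delta|$. Optimizing $\tau\asymp \sqrt{\delta}$ and $\delta\asymp \epsilon$ yields $r_\epsilon=O(\epsilon|\log\epsilon|)$.

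From the two-sided estimate and the uniform lower bound on $\mathcal{H}$, the kinetic term $\iint|v_\epsilon|^2\,dm_\epsilon$ is uniformly bounded, and standard Benamou--Brenier compactness provides, along a subsequence, $m_\epsilon\to \tilde m$ in $C^0([0,T],\mathcal{P}(M))$ and weakly in $L^1((0,T)\times M)$, together with $m_\epsilon v_\epsilon\rightharpoonup \tilde m\tilde v$ weakly in $L^1$. The limit satisfies the continuity equation with the prescribed marginals, and the lower semicontinuity of the kinetic energy together with $\mathcal{F}_0\leq \mathcal{F}_\epsilon+C\epsilon T$ gives $\mathcal{F}_0(\tilde m,\tilde v)\leq \min\mathcal{F}_0$, so $(\tilde m,\tilde v)$ is a minimizer of $\mathcal{F}_0$; uniqueness of the Brenier--McCann displacement interpolation for $L^1$ marginals on the compact Riemannian manifold identifies $(\tilde m,\tilde v)=(m,\nabla u)$ and upgrades the convergence to the whole family. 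The hardest step is clearly the competitor construction in the $L^1$ case, specifically the entropy control on the boundary bridges when one endpoint may have $\mathcal{H}=+\infty$: standard displacement convexity cannot be applied verbatim, and the $|\log\epsilon|$ loss in the rate is precisely the unavoidable price of this regularization.
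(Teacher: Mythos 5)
Your strategy---bracket $\min\mathcal F_\vep$ between explicit competitors, then extract compactness and identify the limit---is genuinely different from the paper, which instead passes to the limit in the optimality system $(u_\vep,m_\vep)$ via the relaxed Hamilton--Jacobi inequality and the cross-duality identity \rife{vecchia-OPS}, using the local $H^1$ and interior $L^\infty$ bounds of Lemma \ref{stime} and Corollary \ref{boundmloc}. Your route is a reasonable alternative in principle, but as written it has concrete gaps. The first concerns compactness: the uniform kinetic bound gives $m_\vep\to\tilde m$ in $C^0([0,T];\cP(M))$ and $m_\vep v_\vep\rightharpoonup\tilde w$ weakly-$*$ as measures, \emph{not} weakly in $L^1(Q_T)$. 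Weak-$L^1$ convergence of $m_\vep$ (and hence of $m_\vep v_\vep$, by Cauchy--Schwarz against the kinetic bound) requires equi-integrability of $m_\vep(t)$, which is precisely what the paper extracts from the De la Vall\'ee Poussin function $U$ of Lemma \ref{giuseppe-villani}, propagated in time via the Eulerian displacement inequality of Proposition \ref{prop:conv} (or, when $m_0,m_1$ have finite entropy, from \rife{bound:semi_conv:log}). This step cannot be dismissed as ``standard Benamou--Brenier compactness.''

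The second gap is in the competitor construction for merely $L^1$ marginals. Your boundary bridge cannot simultaneously be a heat flow (to control the entropy via Li--Yau/de~Bruijn) and a geodesic (to have kinetic cost $O(\delta/\tau)$): with linear time parametrization the heat flow from $m_0$ to $P_\delta m_0$ has kinetic action $\int_0^\delta\into|\nabla P_s m_0|^2/P_s m_0\,dx\,ds\gtrsim\int_0^\delta ds/s=+\infty$ by Li--Yau, while the geodesic bridge gives no entropy control whenever $\cH(m_0;\nu)=+\infty$. The paper resolves this in Proposition \ref{prop:exist_comp} by the power reparametrization $\tilde\mu_0(t)=\mu_0(t^\beta)$, $\beta>1$, which renders the kinetic action finite while keeping the $O(|\log t|)$ entropy decay; you would need to import that device. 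Finally, even granting the overheads $O(\delta/\tau+\tau)$ in kinetic energy and $O(\vep|\log\delta|)$ in entropy, your choice $\tau\asymp\sqrt\delta$, $\delta\asymp\vep$ yields a kinetic surplus $O(\sqrt\delta)=O(\sqrt\vep)$, much worse than the target $O(\vep|\log\vep|)$; balancing $\sqrt\delta$ against $\vep|\log\delta|$ forces $\delta\asymp\vep^2$, which does recover the stated rate, so your scaling is off.
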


Last but not least, we will show (see  Theorem \ref{conv_vep2}) that using a suitable approximation of $m_0, m_1$ with   sequences $m_{0\vep}, m_{1\vep}$ of smooth positive functions, the Wasserstein geodesic between $m_0, m_1$ can be approximated by smooth minimizers of $\cF_\vep$, in a way that  $u_\vep$ remains uniformly bounded in Lipschitz norm. Hence, in particular,  the Kantorovich potentials converge uniformly in $M$. 

This latter result  shows that the purpose of smoothing the Wasserstein geodesic can be fully accomplished with the functional \rife{func}; in particular, this gives a general Eulerian strategy  towards the proof of displacement convexity properties of the geodesics of optimal transport. As an example, we recover some results of \cite{Erasquin-McCann},  \cite{Daneri_2008} with an alternative proof, avoiding the use of EVI inequalities in favor of a standard Eulerian approach which is now justified going through problems \rife{func}.

\section{Notations and setting of the problem}\label{notations}
In the following, we recall some elements of Riemannian geometry (see e.g. \cite{taylor2010partial}). Throughout  the paper, $( M , g)$ denotes  a smooth, compact, connected and oriented $d$-dimensional Riemannian manifold without boundary, with metric tensor $g=(g_{ij})$, inverse $g^{-1}=(g^{ij})$ and determinant $\abs{g}$. The orientation induces a unitary volume form $dx$. 
If $w$ and $v$ are two vector fields on $ M $, we denote by 
\begin{equation*}
    w \scalg w \coloneqq\sum_{ij} g_{ij}(x)w_{i}v_j
\end{equation*}
their scalar product in the tangent space $T_x\! M $. The length of a vector field is given by $|w|= \sqrt{w \scalg w }$. Correspondingly, there is a scalar product in the cotangent space $T^*_x\!M$, which is defined on differential $1$-forms $\omega$ and $\nu$ on $M$ as $    \omega \scalg \nu \coloneqq\sum_{ij} g^{ij}(x)\omega_{i}\nu_j$.  

Let $x_j$, $j=1,\dots, d$, be a local system of coordinates: if $u\in C^1( M )$, the covariant gradient of $u$, denoted by $\nabla u$, is the vector field with coordinates $\nabla_i u=g^{ij}(x)u_{x_j}$. Therefore, given $u,v\in C^1( M )$,
\begin{equation*}
    \nabla u\scalg \nabla v=\sum_{ij}g^{ij}(x)u_{x_i}v_{x_j}
\end{equation*}
We denote the Levi-Civita connection associated to the metric $g$ with the letter $D$ and we will derivate covariantly vector and tensor fields on $M$.
Recalling that, in local coordinates, the Christoffel symbols are
\begin{equation*}
    \Gamma^k_{ij}= \frac{1}{2}\sum_l\left(\frac{\partial g_{jl}}{\partial x_i}+\frac{\partial g_{li}}{\partial x_j}-\frac{\partial g_{ij}}{\partial x_l}\right)g^{lk}\,,
\end{equation*}
the covariant derivative of a $C^1$ vector field $X=(X_j)$ along the vector field $v=(v_i)$ is the vector field $D_vX$ with $k$-th coordinate given by
$$
(D_vX)_k=\sum_{ij}v_iX_j\Gamma^k_{ij}+(\nabla X_k)\scalg v.
$$
If $X=(X_j)$ is a $C^1$ vector field on $M$, the divergence of $X$ is defined by
$$
div_gX=\frac{1}{\sqrt{\abs{g}}}\sum_k(\sqrt{\abs{g}}X^k)_{x_k}
$$
 and the Leibniz rule: $div_g(fX)= \nabla f \scalg X+ f div_g X$ holds for every $f\in C^1(M)$ and any $C^1$ vector field $X$ on $M$. Furthermore, by the Stokes theorem, we have
$$
\int_M div_g X\,dx=0\,.
$$
%for every  $C^1$ vector field $X$ on $M$.
The Hessian $\nabla^2u$ of a $C^2$ function $u$ is the symmetric $2$-tensor given by
$$
(\nabla^2u)(v,w)\coloneqq (D_v\nabla u)\scalg w=(D_w\nabla u)\scalg v
$$
for every vector fields $v,w$ on $M$, where the last equality follows by the symmetry and the compatibility with the metric of the Levi-Civita connection.
The components of the Hessian are the second covariant derivatives, given by
\begin{equation*}
   \nabla_{ij}u=u_{x_ix_j}-\sum_k\Gamma^k_{ij}u_{x_k}.
\end{equation*}
In particular for every $C^2$ functions $f,u$ and every vector field $v$ it holds
$$
v\scalg\nabla(\nabla f\scalg \nabla u)=(\nabla^2 f)(v,\nabla u)+(\nabla^2 u)(v,\nabla f).
$$
As usual, we denote {$\Delta_gu=div_g(\nabla u)$}   the Laplace-Beltrami operator on $ M $. We recall the B\"ochner formula (see e.g. \cite{bochner}):
\be\label{boch}
\tfrac{1}{2}\Delta_g\abs{\nabla f}^2=\abs{\nabla^2f}^2+\nabla (\Delta_gf)\scalg \nabla f+Ricc_g(\nabla f, \nabla f)
\ee
for every $f\in C^3(M)$, where $Ricc_g$ is the curvature tensor of the metric $g$. % and $\abs{\nabla^2f}^2=\sum_{ij}\abs{\nabla^2_{ij}f}^2$. 
Throughout all the paper,  we will assume that $Ric_g(M)$ is bounded below,  i.e. \be\label{ricci}
Ricc_g(X,X)\ge -\lambda \abs{X}^2
\ee 
for every vector field $X$ on $M$,  for some $\lambda\geq 0$.
  \vskip0.5em
Given $M$, we denote by $\cP(M)$ the space of probability measures on $M$, endowed with the Wasserstein metric.   The characterization of the Wasserstein geodesics in terms of optimal mass transportation on $M$ is well established, since $M$ is compact, see \cite{McCann2}, \cite{Villani-oldnew}.   
We will always identify the measures with their densities when they are  absolutely continuous with respect to the unitary volume measure $dx$. With $L^p(M)$ we  indicate the standard Lebesgue space, for $p\in[1,+\infty]$, and with $W^{k,p}(M)$ the Sobolev space of functions with $k$ weak $L^p$-derivatives.  We will use the Sobolev and Poincar\'e-Wirtinger inequalities on $M$, for which we refer to \cite{hebey}. 
Finally, throughout the paper we denote $Q_T$ the cylinder $(0,T)\times M$, and $\overline Q_T= [0,T]\times M$.

\subsection{Optimal transport  functional} 

We now make precise the sense of the minimization problem \rife{func}. 

\begin{definition}\label{def-conteq-sol} Let $m_0, m_1\in \cP( M )$.  A couple $(m,v)$ is a solution of the continuity equation 
\be\label{conteq}
\begin{cases}
\partial_t m-div_g(vm)=0\\m(0)=m_0\,, \,\,m(T)=m_1\,,
\end{cases}
\ee
if $m\in C([0,T];\cP( M ))$ with $m(0)=m_0$ and $m(T)=m_1$,  $v(t,x)$ is a measurable vector field on $Q_T $ such that $\iinto |v|^2\, dm <\infty$ and the following equality holds
$$
\into \vfi(t)dm(t)-\into \vfi(s)dm(s) + \int_s^t\!\!\!\! \into \left( -\partial_t \vfi + v\scalg \nabla \vfi\right) \, dm =0 \,, 
$$
for every $0\leq s<t\leq T$ and every   function $\vfi\in C^1(\overline Q_T )$.
\end{definition}

We recall (see \cite{AGS}) that  weak solutions as defined above are essentially equivalent to absolutely continuous curves from $[0,T]$ into $\cP( M )$ which have $L^2$ metric derivative.  We also recall that any convex, superlinear function $F(r)$ induces a lower semicontinuous functional on the space of probability measures:
$$
F(m):= \begin{cases} \into F(m)\, dx  & \hbox{if $m$ is absolutely continuous} \\ +\infty  & \hbox{otherwise.}
\end{cases} 
$$
Similar kind of functionals have been extensively studied,  see e.g. \cite{LaSa} and references therein. Even if we could  consider general functions $F$, for the sake of clarity we  restrict  the analysis in this paper  to the specific entropic case, in which $F(m)= m\log(m)$, and more generally to the relative entropy in terms of a possibly inhomogeneous reference measure $\nu= e^{-V(x)}dx$:
\be\label{rel-entr}
\cH(m; \nu):= \into F\left(\frac{dm}{ d\nu} \right)d\nu= \into \log\left(\frac{dm}{ d\nu} \right)dm= \into m(\log m + V)dx \,
\ee
with the convention that $\cH(m; \nu)=+\infty$ whenever $m$ is not absolutely continuous with respect to $dx$.  In what follows, we assume that $V$ is (at least) Lipschitz continuous on $M$.
\vskip0.4em
Thanks to Definition \ref{def-conteq-sol}, the meaning of the optimal transport problem \rife{func} is now clarified, to be read as
\be\label{func-precise}
\begin{aligned}
       &  \min \mathcal{F}_\epsilon(m,v) \coloneqq\iinto \frac{1}{2}\abs{v}^2\,\,dm+ \vep  \int_0^T \cH(m ; \nu) \,, \qquad \nu:= e^{-V(x)}dx\\ 
        & \qquad \hbox{among all }\quad (m,v)\, : \quad \begin{cases}\partial_t m-div_g(vm)=0\\m(0)=m_0\,, \,\,m(T)=m_1
        \end{cases},
    \end{aligned}        
\ee
where the equation is understood as above.

We first establish that, for every   $m_0, m_1\in \cP(M)$, there exists an arc along which  the above functional is finite, so it admits a finite minimum. In addition, we can give a universal upper bound  on the minimal value of $\mathcal{F}_\epsilon$.

\begin{proposition}
\label{prop:exist_comp}
%Assume that $M$ is a compact manifold under conditions of  Section 2 and, in particular, that 
Let \rife{ricci} hold  true.  There exists a constant $C( M,d, \lambda, T, \|V\|_\infty)$ (depending on $M$ as well as on $d,\lambda, T, \|V\|_\infty$) such that
    \begin{equation}\label{upper-min}
        \min \mathcal{F}_\epsilon\leq C( M,d, \lambda, T, \|V\|_\infty)
    \end{equation}
    for every $m_0,m_1\in\cP( M )$, and every $\vep\leq 1$.
\end{proposition}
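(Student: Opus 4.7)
The plan is to produce an explicit admissible competitor $(m,v)$ for \rife{func-precise} whose kinetic and entropic costs are each controlled by a constant depending only on $M,d,\lambda,T,\|V\|_\infty$; since $\vep\le 1$, this will yield \rife{upper-min}. I split $[0,T]$ into $[0,T/3]\cup[T/3,2T/3]\cup[2T/3,T]$ and build $m$ as follows: on the two outer pieces, regularize $m_0,m_1$ via a time-reparametrized heat flow; on the middle piece, connect the two smoothed densities by a Wasserstein geodesic.

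On $[0,T/3]$ I take $m(t)=P_{\tau(t)}m_0$, where $P_s$ is the heat semigroup on $(M,g)$ and $\tau\colon[0,T/3]\to[0,s_0]$ is smooth, increasing, with $\tau(0)=0$. Since $\partial_t m=\tau'(t)\Delta_g m$, this is a solution of the continuity equation with $v=-\tau'(t)\nabla\log m$, and its kinetic cost is $\int_0^{T/3}\tau'(t)^2 I(P_{\tau(t)}m_0)\,dt$, where $I$ denotes the Fisher information. On a compact manifold with $Ric_g$ bounded below the universal heat-kernel bound $\|P_s m_0\|_\infty\le C\,s^{-d/2}$ gives, uniformly in $m_0\in\cP(M)$, the estimate $h(s):=\into (P_s m_0)\log(P_s m_0)\,dx\le C_1+\tfrac{d}{2}|\log s|$. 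Combining this with de~Bruijn's identity $h'(s)=-I(P_s m_0)$, the change of variables $s=\tau(t)$ and an integration by parts rewrite the kinetic cost as
\begin{equation*}
-f(s_0)h(s_0)+\lim_{s\to 0^+}f(s)h(s)+\int_0^{s_0} f'(s)h(s)\,ds, \qquad f(s)=\tau'(\tau^{-1}(s)).
\end{equation*}
Choosing $\tau(t)=c\,t^2$, so that $f(s)\sim\sqrt{s}$, the boundary term at $0$ vanishes because $\sqrt{s}\,|\log s|\to 0$, and the remaining integral converges to a constant independent of $m_0$. The same $\tau$ keeps $\int_0^{T/3}\cH(P_{\tau(t)}m_0;\nu)\,dt$ finite, since $|\log\tau(t)|$ is integrable near $t=0$. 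On $[2T/3,T]$ the construction is mirrored, with $m(t)=P_{\tau(T-t)}m_1$.

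On $[T/3,2T/3]$ I take $m(t)$ to be the Wasserstein geodesic joining $P_{s_0}m_0$ and $P_{s_0}m_1$. Its kinetic cost is $\tfrac{3}{2T}\Wdist_2^2(P_{s_0}m_0,P_{s_0}m_1)\le\tfrac{3}{2T}\mathrm{diam}(M)^2$. The hypothesis $Ric_g\ge-\lambda$ yields the $(-\lambda)$-displacement semiconvexity of the Boltzmann entropy along $\Wdist_2$-geodesics (cf.~\cite{Erasquin-McCann,Villani-oldnew}), so
\begin{equation*}
\into m(t)\log m(t)\,dx\le\max_{i=0,1}\into P_{s_0}m_i\log P_{s_0}m_i\,dx+\tfrac{\lambda}{8}\mathrm{diam}(M)^2,
\end{equation*}
and adding the trivial bound $\int V\,dm(t)\le\|V\|_\infty$ controls $\cH(m(t);\nu)$ by a universal constant. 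Gluing the three pieces produces a curve in $C([0,T];\cP(M))$ solving the continuity equation in the sense of Definition~\ref{def-conteq-sol}, and summing the estimates gives $\cF_\vep(m,v)\le C(M,d,\lambda,T,\|V\|_\infty)$ uniformly in $m_0,m_1\in\cP(M)$ and $\vep\le 1$.

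The main difficulty is the regularization on the outer pieces: when $m_0$ or $m_1$ has infinite or undefined entropy (every singular $\mu\in\cP(M)$ is of this type), the plain heat flow $\tau(t)=t$ is unusable, for de~Bruijn then forces $\int_0^{s_0}I(P_s m_0)\,ds=+\infty$. The subquadratic reparametrization $\tau(t)\sim t^2$ shifts weight away from the singular behaviour of the Fisher information at $s=0$, while still producing an admissible solution of the continuity equation and keeping the time integral of the entropy finite; calibrating $\tau$ so that both costs are bounded simultaneously by a universal constant is the only genuinely non-routine point of the argument.
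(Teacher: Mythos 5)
Your construction is correct and has the same overall architecture as the paper's proof (three time pieces: reparametrized heat flow on the outer pieces, Wasserstein geodesic in the middle, and $(-\lambda)$-displacement convexity of the entropy along that geodesic). What is genuinely different is the technical engine used to bound the kinetic cost of the outer pieces. The paper controls $\int_M |\nabla\log\mu_0|^2\,\mu_0\,dx$ directly via the Li--Yau inequality ($\le C + 2d/t$), then integrates the reparametrized version; it separately obtains the entropy decay $h(s)\le -\tfrac d2\log s + C$ from Sobolev/Poincar\'e plus an ODE comparison. You instead take the on-diagonal heat-kernel bound $\|P_s m_0\|_\infty\le C s^{-d/2}$ as your only input, deduce the entropy bound immediately, and then recover the kinetic-cost bound \emph{from} the entropy bound through de~Bruijn's identity $h'(s)=-I(P_s m_0)$ plus integration by parts, with $\tau(t)\sim t^2$ tuned so that $f'(s)\sim s^{-1/2}$ makes $\int_0^{s_0} f'(s)h(s)\,ds$ converge and the boundary term $f(s)h(s)\to 0$. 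This is a neat reorganisation: it sidesteps Li--Yau entirely, and routes both the kinetic and entropic costs through a single heat-kernel estimate. Two cosmetic points: in the paper's sign convention $\partial_t m - \mathrm{div}_g(vm)=0$ the velocity should be $v=+\tau'(t)\nabla\log m$ (irrelevant for $|v|^2$), and the integration-by-parts argument implicitly uses the lower bound $h(s)\ge -\log|M|$ (Jensen) to control the boundary term from below — you should state this, though it is elementary. With those noted, the argument goes through and yields a constant with the stated dependence $C(M,d,\lambda,T,\|V\|_\infty)$.
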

\begin{proof}
    %Given a generic element $m_0\in\cP( M )\cap L^1( M )$, we claim that there exist an element $\overline{m_0}\in\cP( M )\cap C^\infty( M )$ such that $\mathcal{F}_\epsilon(m_0,\overline{m_0})<+\infty$. Then the lemma follows for every couple $m_0,m_1\in\cP( M )\cap L^1( M )$ by gluing the three curves connecting $m_0$ to $\overline{m_0}$, $\overline{m_0}$ to $\overline{m_1}$ and $\overline{m_1}$ to $m_1$.

    Consider the heat kernel $p_t(x,y)$ associated to the volume measure $dx$ and the curve $\mu_0(\cdot):[0,1]\to\cP( M )\cap C^\infty_+( M )$  generated by the heat semigroup $S_t$:
    $$
    t\to \mu_0(t,x)= S_t (m_0)\coloneqq\int_ M  p_t(x,y)\,dm_0(y).
    $$
It is a classical result (cfr. \cite[Chapters 7, 8]{grigoryan2009heat}) that   $\mu_0(\cdot)$ is well defined and is  a smooth solution of the heat equation
    $$
    \frac{\partial}{\partial t}\mu_0=\Delta_g \mu_0\qquad \hbox{on $[0,\infty)\times M$.}
    $$ 
    In particular we have $\mu_0(t,\cdot)>0$ for every $t>0$ by the strong maximum principle. %Finally we note that 
   % \be\label{bound:unif_heat}
    %\norma{\mu_0(t,\cdot)}_\infty\leq\norma{p_t}_{L^\infty( M ^2)}\leq C(t, M ).
    %\ee
    It follows that the velocity of such curve for $t>0$ is given by the vector field
    $$
        \nu_0(t,x)\coloneqq\frac{\nabla \mu_0}{\mu_0} \,.
    $$
    By the Li-Yau inequality \cite[Theorem 1.4]{li_yau} we know that there exists a constant $C(d,\lambda)$ such that  
    %for every $\delta>0$ 
    $$
    \frac{\abs{\nabla \mu_0}^2}{\mu_0 }-2\frac{\partial}{\partial t}\mu_0\leq 
    (C+\frac{2d}{t})\,  \mu_0\,.
    $$
    %$$
    %\int_ M \left(\frac{\abs{\nabla \mu_0}^2}{\mu_0 }-2\frac{\partial}{\partial %t}\mu_0\right) \,dx\le(C+\frac{2d}{t})\int_ M  \mu_0+\delta\,dx
    %$$
    Recalling that $\mu_0$ is a probability density for every $t>0$, integrating the above inequality we get 
    %{\blue and passing to the limit for $\delta\to0$ (by monotone convergence)} we get 
    \begin{equation}
    \label{bound:heat_comp:vel}    
       \int_ M  \abs{\nu_0}^2\mu_0\,dx\le C+\frac{2d}{t}. 
    \end{equation}
    We now study the decay of the entropy along the heat flow. We recall that 
    $$
     \frac{\partial }{\partial t}\int_ M  \mu_0\log(\mu_0) \,dx= \frac{\partial }{\partial t}\int_ M  (\mu_0\log(\mu_0)-\mu_0)\,dx=- \int_ M  \frac{\abs{\nabla \mu_0}^2}{\mu_0}\,dx\,.
    $$
    By Sobolev and Poincaré-Wirtinger inequality we have (for $2^*=\frac{2d}{d-2}$ if $d>2$, or $2^*$ any sufficient large number if $d=2$) 
\begin{align*}
   \int_ M \frac{\abs{\nabla \mu_0}^2}{\mu_0}\,dx&=  4\int_ M \abs{\nabla \sqrt{\mu_0}}^2\,dx \ge C_S\left(\int_ M \abs{\sqrt{\mu_0}-Vol( M )^{-1}\!\!\int_ M \sqrt{\mu_0}\,dx}^{2^*}dx\right)^{\frac{2}{2^*}}\\
    &\ge c_1(\int_ M \sqrt{\mu_0}^{2^*}dx)^\frac{2}{2^*}-c_2 (\int_ M \sqrt{\mu_0}\,dx)^2\\
    &\ge c_1(\int_ M \sqrt{\mu_0}^{2^*}dx)^\frac{2}{2^*}-c_2  Vol( M )\,.
\end{align*}
By the concavity of the $\log$ function and Jensen inequality for the probability measure $\mu_0$
\be\label{fish}
\begin{split}
     \log\left(\int_ M \frac{1}{\mu_0}\abs{\nabla \mu_0}^2\,dx+c_2 Vol( M )\right)&\ge \frac{2}{2^*}\log\left(\int_ M \sqrt{\mu_0}^{2^*}dx\right)+\log(c_1)\\
    &\ge \frac{2}{2^*}\int_ M \log\left( \mu_0^{\frac{2^*-2}2}\right)\mu_0dx+\log(c_1)\\
    &= \frac{2}{d}\int_ M    \log\left(\mu_0\right)\mu_0dx+\log(c_1)\,.
\end{split}
\ee
In other words, if $\varphi(t)\coloneqq\int_ M  \mu_0\log \mu_0 \,dx$, then we deduce
    $$
    \varphi'(t)\le- c_1e^{\frac{2}{d}\varphi}+C
    $$
    for a constant $C$ depending only on $Vol( M )$ and $d$. 
    This implies
    $$
    (\varphi'(t)-C)e^{-\frac{2}{d}(\varphi-Ct)} \leq- c_1 e^{\frac{2C}{d}t} \leq - c_1 
    $$
%    using the positivity of $t, C$ and $d$ we get
%    $$
%    (\varphi'(t)-C)e^{-\frac{2}{d}(\varphi-Ct)}\leq-1.
%    $$
 and then,  integrating in $(t_0,t_1)$, we get
    $$
    -\frac{d}{2}e^{-\frac{2}{d}(\varphi(t_1)-Ct_1)}+\frac{d}{2}e^{-\frac{2}{d}(\varphi(t_0)-Ct_0)}+c_1(t_1-t_0)\leq0\,.
    $$
In particular, letting $t_0\to 0$ we deduce
%    by the positivity of the exponential and passing to the limit $t_0\to0$ 
    $$
    -\frac{d}{2}e^{-\frac{2}{d}(\varphi(t_1)-Ct_1)}+c_1 t_1\leq0.
    $$
 Since $t_1$ is arbitrary, this means that   
    \be\label{bound:heat_comp:entr}
    \int_ M  \mu_0(t)\log \mu_0(t)  \,dx=\varphi(t)\le-\frac{d}{2}\log\left(\frac {2c_1}dt\right)+C(d, M )t
    \ee
    for every $t>0$.\\
    Now, for any given $ \beta>1$, we consider the reparametrization $\Tilde{\mu}_0(t,\cdot)\coloneqq\mu_0(t^\beta)$ for every $t>0$. Its velocity field is
    $$
    \Tilde{\nu}_0(t,\cdot)\coloneqq \beta t^{\beta-1}\nu_0(t^\beta,\cdot)
    $$
    so, for any fixed $0<\delta_0<\frac{T}{3}$, by \eqref{bound:heat_comp:vel}
    \begin{align*}
        \int_0^{\delta_0}\!\!\int_ M \abs{\Tilde{\nu}_0}^2\Tilde{\mu}_0\,dxdt&=\frac{1}{\beta}\int_0^{\delta_0^\beta}\!\!t^{1-\frac{1}{\beta}}\int_ M  \abs{\nu_0}^2\mu_0\,dxdt\\
        & \leq\frac{1}{\beta}\int_0^{\delta_0^\beta}\!\!\frac{1}{t^{\frac{1}{\beta}}}\,dt
    \end{align*}    
    which is finite for every $\beta>1$. With such a choice, if we merge this estimate  with \eqref{bound:heat_comp:entr} we obtain
    $$
        \int_0^{\delta_0}\!\!\int_ M \abs{\Tilde{\nu}_0}^2\Tilde{\mu}_0\,dxdt+\vep\int_0^{\delta_0}\!\int_ M  \Tilde{\mu}_0(t)(\log \Tilde{\mu}_0(t)+ V)\,dxdt\leq C_0
    $$
    where $C_0$ is a constant depending only on $ M, d, \lambda, T,  \|\vep V\|_\infty, \beta$ and $\delta_0$.\\
    In a similar way, for a fixed $T/3<\delta_1<T$, we find a smooth curve of probability densities $\Tilde{\mu}_1$, with velocity $\Tilde{\nu}_1$ such that
    $$
        \int_{\delta_1}^T\!\!\int_ M \abs{\Tilde{\nu}_1}^2\Tilde{\mu}_1\,dxdt+\vep\int_{\delta_1}^T\!\int_ M  \Tilde{\mu}_1(t)(\log \Tilde{\mu}_1(t)+ V)\,dxdt\le C_1
    $$
    where $C_1$ is a constant depending only on $ M, d, \lambda, T,  \|\vep V\|_\infty, \beta$ and $\delta_1$.\\
    Consider now the 2-Wasserstein geodesic $(\overline{\mu},\overline{\nu})$ between $\Tilde{\mu}_0(\delta_0,\cdot)$ and $\Tilde{\mu}_1(\delta_1,\cdot)$. We recall (see e.g. \cite{Daneri_2008})   that the entropy functional is $(-\lambda)$-convex along the 2-Wasserstein geodesic. Hence, by \eqref{bound:heat_comp:entr},
    $$
        \int_{\delta_0}^{\delta_1}\!\!\int_ M \abs{\overline{\nu}}^2\overline{\mu}\,dxdt+\vep\int_{\delta_0}^{\delta_1}\!\!\int_ M \overline{\mu}(\log\overline{\mu}+ V)\,dxdt\le C
    $$
    where the constant $C$ depends only on $ M, d, \lambda, T ,  \|\vep V\|_\infty, \delta_0,\delta_1$ and the Wasserstein distance $W_2\bigl(\Tilde{\mu}_0(\delta_0,\cdot),\Tilde{\mu}_1(\delta_1,\cdot)\bigr)$. However, the latter is uniformly estimated in terms of the manifold, 
%    But, for any fixed $x_0\in M $ 
%    $$
%    W_2\bigl(\mu_0(\delta_0,\cdot),\mu_1(\delta_1,\cdot)\bigr)\le W_2\bigl(\mu_0(\delta_0,\cdot),\delta_{x_0}\bigr)+W_2\bigl(\delta_{x_0},\mu_1(\delta_1,\cdot)\bigr)\leq C( M )
%    $$
    thanks to the compactness of $ M $. 
    %It follows that
%    $$
%        \int_{\delta_0}^{\delta_1}\!\!\int_ M \abs{\overline{v}}^2\overline{\mu}\,dxdt+\vep\int_{\delta_0}^{\delta_1}\!\!\int_ M \overline{\mu}\log\overline{\mu}\,dxdt\le C_{0,1}
%    $$
%    where $C_{0,1}$ is a constant depending only on $ M , \delta_0$ and $\delta_1$.\\
    Finally, gluing the paths from $m_0$ to $\tilde \mu_0(\delta_0,\cdot)$ and from  $\tilde \mu_1(\delta_1,\cdot)$ to $m_1$ with the 2-Wasserstein geodesic $\overline{\mu}$, we have built an admissible arc joining $m_0$ and $m_1$, and with a convenient choice of $\delta_0, \delta_1$ we estimate 
    $$
    \inf \mathcal{F}_\vep(m_0,m_1)\leq C_0+C +C_1
    $$
    where last constants only depend on $M, d, \lambda, T,  \|\vep V\|_\infty$. 
    
    It is a classical result, after Benamou-Brenier's trick \cite{BB} and the weak-lower semicontinuity of the entropy, that  the above estimate, with the existence of an admissible curve, yields the existence of a minimizer of $\cF_\vep$ by direct methods of Calculus of Variations. For a similar proof in euclidean context, see e.g. \cite[Proposition 2.9]{LaSa}.
    \end{proof}

\begin{remark}
\label{T-dependence}\rm
With a suitable choice of $\de_0,\de_1$ in the above proof, it is possible to give an estimate of the dependence of the constant $C$ in \rife{upper-min} from the time-horizon $T$. Alternatively, if we denote $ \min \mathcal{F}_\epsilon^1$ the minimum in unit time $T=1$, with a simple time-scaling one can estimate
$$
 \min \mathcal{F}_\epsilon\leq \max\left(\frac1T, T\right)  \min \mathcal{F}_\epsilon^1 \leq \max\left(\frac1T, T\right) C(M,d,\lambda)
$$
\end{remark}

\begin{remark}\label{dasoli} \rm We stress that in the above proof we can avoid the use of the  Wasserstein geodesic $(\overline{\mu},\overline{\nu})$ between $\Tilde{\mu}_0(\delta_0,\cdot)$ and $\Tilde{\mu}_1(\delta_1,\cdot)$ (and consequently, avoid the use of the $(-\lambda)$- displacement convexity of the geodesic). In fact, since $\Tilde{\mu}_0(\delta_0,\cdot)$ and $\Tilde{\mu}_1(\delta_1,\cdot)$ are smooth and positive, we can take the smooth optimal curve  of $\cF_\vep$ joining the two measures, whose existence will be proved in Section \ref{sec5}, Theorem \ref{smoothex}.
\end{remark}

%We skip this point now, since in any case our approach will  produce later a unique minimizer, %characterized by the optimality system. 

 \section{The optimality system}

In this Section we discuss the structure  of the optimality system satisfied by minima of functional \rife{func-precise}.  
This is a first order PDE system which takes the following form
\begin{equation}
\label{mfg-log}
\left\{
\begin{aligned}
    &-\partial_t u+\frac{1}{2}\abs{\nabla u}^2=   \epsilon (\log(m)+V )\,, \qquad t\in (0,T)\\
    & \partial_t m-div_g(m\nabla u)=0\,, \qquad \qquad \qquad t\in (0,T).
\end{aligned}
\right.
\end{equation}
%{\blue This is in fact just one instance of PDE systems coupling HJ equation and continuity equations, which also appear in MFG theory. More general versions will be considered later. }
Let us first underline that, at least formally, \rife{mfg-log} is the optimality condition of \rife{func-precise}, in the sense that $(m,\nabla u)$ provides with the  couple $(m,v)$ minimizing \rife{func-precise}.  This is a consequence of the convexity of the functional, following the original idea of Benamou and Brenier  \cite{BB}.  Even if this is clear to expert readers, we provide a proof for completeness.

\begin{lemma}\label{itsmin} Let $(u,m)$ be a  smooth solution of system \rife{mfg-log}, in the sense that $u\in  C^1([0,T]\times  M ), m\in C^0([0,T]\times  M ) $ with $m(0)=m_0, m(T)=m_1$, and  $m>0$. Then $(m,\nabla u)$ is a minimum  point of \rife{func-precise}.
\end{lemma}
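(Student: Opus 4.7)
The strategy is the Benamou--Brenier-type duality argument: take any admissible competitor $(\tilde m,\tilde v)$ for which $\cF_\vep(\tilde m,\tilde v)<\infty$ (so that in particular $\tilde m$ is absolutely continuous and $\tilde m\log\tilde m\in L^1$), and use the smooth function $u$ as a test function in the weak formulation of the continuity equation for both $(m,\nabla u)$ and $(\tilde m,\tilde v)$. Since both curves share the same marginals $m_0,m_1$, the ``boundary'' terms $\int_M u(T)\,dm_1-\int_M u(0)\,dm_0$ coincide in the two resulting identities, and can be eliminated.

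Concretely, applying Definition \ref{def-conteq-sol} with test function $\varphi=u$ to the two pairs gives
\begin{equation*}
\iinto\bigl(\partial_t u-|\nabla u|^2\bigr)m\,dxdt=\iinto\bigl(\partial_t u-\nabla u\scalg \tilde v\bigr)\tilde m\,dxdt.
\end{equation*}
I would then substitute $\partial_t u=\tfrac12|\nabla u|^2-\vep(\log m+V)$ from the Hamilton--Jacobi equation into both sides. The left-hand side collapses to $-\cF_\vep(m,\nabla u)$, whereas on the right-hand side one applies the pointwise Young inequality $\nabla u\scalg \tilde v\leq \tfrac12|\nabla u|^2+\tfrac12|\tilde v|^2$ to obtain
\begin{equation*}
-\cF_\vep(m,\nabla u)\leq \iinto \tfrac12|\tilde v|^2\tilde m\,dxdt\cdot(-1)+\vep\iinto \tilde m\,(\log m+V)\,dxdt\cdot(-1).
\end{equation*}
Rearranging yields $\cF_\vep(m,\nabla u)\leq \iinto \tfrac12|\tilde v|^2\,\tilde m\,dxdt+\vep\iinto \tilde m(\log m+V)\,dxdt$.

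The one step that requires a small extra argument is that the right-hand side still involves $\log m$ (coming from the HJ source) rather than $\log\tilde m$. This is handled by the standard convexity/Gibbs inequality: from $\log(m/\tilde m)\leq m/\tilde m-1$ one gets $\tilde m\log m\leq \tilde m\log\tilde m+(m-\tilde m)$ pointwise (with the usual convention $0\log 0=0$, and recalling that $m>0$ by assumption), and integrating in $x$ the correction $\int_M(m-\tilde m)\,dx$ vanishes since $m(t)$ and $\tilde m(t)$ are both probability measures for a.e.\ $t$. Consequently $\iinto \tilde m(\log m+V)\,dxdt\leq \iinto \tilde m(\log\tilde m+V)\,dxdt$, and combining with the previous inequality gives $\cF_\vep(m,\nabla u)\leq \cF_\vep(\tilde m,\tilde v)$, as desired. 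The only technical caveat is the legitimacy of using $u$ as a test function, but this is immediate from the assumed regularity $u\in C^1([0,T]\times M)$, which makes $u$ admissible in Definition \ref{def-conteq-sol}.
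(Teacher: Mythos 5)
Your proof is correct and is essentially the paper's argument: the pointwise Young inequality $\nabla u\scalg\tilde v\le\tfrac12|\nabla u|^2+\tfrac12|\tilde v|^2$ that you use is exactly what the subgradient inequality for the Benamou--Brenier function $\Psi(p,m)=|p|^2/(2m)$ reduces to when evaluated at $(m\nabla u,m)$, and the final step (the Gibbs inequality $\tilde m\log m\le\tilde m\log\tilde m+(m-\tilde m)$) coincides with the convexity inequality for $r\mapsto r\log r$ used in the paper. Your organization (test against $u$ for both curves first, then estimate) is a clean and equivalent rearrangement of the same argument.
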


\begin{proof} Let $(\mu,v)$ be any couple which solves the continuity equation  in the sense of Definition \ref{def-conteq-sol}. We can assume that $\cF_\vep(\mu,v)<\infty$ (otherwise, the inequality  $\cF_\vep(m,\nabla u)\leq \cF_\vep(\mu,v)$ is obvious), and in particular $\mu \in L^1(Q_T)$.  Let  us define the following convex and lower semicontinuous function in $\R^d\times \R$:
\be\label{BB}
 \Psi(p,m)= \left\{
    \begin{array}[c]{ll}
      \frac{|p|^2}{2m}  \quad & \hbox{if}\quad m>0,\\
      0\quad & \hbox{if}\quad m=0 \hbox{ and } p=0,\\
      +\infty \quad &\hbox{otherwise}.
    \end{array}
\right.
\ee
We set $w=\mu v, \hat w= m\nabla u$. Since $m,u$ are smooth solutions of \rife{mfg-log} (with $m>0$),  then $\partial \Psi(\hat w,m)$ is well defined.  By convexity of $\Psi$, we have
\begin{align*}
\Psi(\hat w, m)- \Psi (w,\mu) & \leq \partial_p \Psi(\hat w, m) \cdot (\hat w-w) + \partial_m \Psi(\hat w,m)(m-\mu)
\\ & = \frac{\hat w}m\cdot (\hat w-w)-  \frac{|\hat w|^2}{2m^2}(m-\mu)
\end{align*}
Since  $\mu\in L^1$ and $\mu|v|^2 \in L^1$, we have $w\in L^1$ and the above inequality is integrable on $ M $. From the very definition of $w,\hat w$ we deduce
\be\label{Psi}
\iinto  \frac{1}{2} m|\nabla u|^2 \leq \frac12 \iinto \mu|v|^2 + \frac12 \iinto m |\nabla u|^2 - \iinto w\cdot \nabla u + \frac12 \iinto \mu |\nabla u|^2
\ee
Since $u\in C^1$,   the continuity equation gives 
\begin{align*}
\iinto w\cdot \nabla u & =  \iinto \partial_tu\, \mu- \into u(T)m_1+ \into u(0)m_0  
\\ & = -\vep \iinto (\log m+ V)\mu + \frac12 |\nabla u|^2 \mu -
\into u(T)m_1+ \into u(0)m_0\,.
\end{align*}
Hence from \rife{Psi} we get
\begin{align*}
\iinto  \frac{1}{2} m|\nabla u|^2 & \leq \frac12 \iinto \mu|v|^2 + \frac12 \iinto m |\nabla u|^2 +
\into u(T)m_1- \into u(0)m_0  \\ & + \vep \iinto (\log m+ V)\mu
\\ & = \frac12 \iinto \mu|v|^2 - \vep \iinto m(\log(m)+ V)+ \vep \iinto (\log m+ V)\mu
\end{align*}
By convexity we obviously have $m\log(m)-\mu\log(\mu)\leq \log(m) (m-\mu)+(m-\mu)$, where last term disappears after integration. Then we conclude that
$$
\mathcal{F}_\epsilon(m,\nabla u) \leq \mathcal{F}_\epsilon(\mu,v)\,.  
$$
\end{proof}

Since \rife{mfg-log} is a Hamiltonian system, there is  some invariant of motion. The proof is straightforward. 

\begin{lemma}\label{HS}  Let $(u,m)$ be a smooth solution of system \rife{mfg-log}.   Then we have that the quantity 
$$
 E(m_0,m_1):=\frac12 \into m|\nabla u|^2 - \vep \into m(\log(m)+V)
$$
is constant in time and it holds
\begin{equation}    
    \label{bound:E}
  E(m_0,m_1)= \frac1T  {\mathcal B}_\vep(m_0,m_1)- \frac{2\vep}T \iinto m(\log(m)+V),
\end{equation}
      where ${\mathcal B}_\vep(m_0,m_1)=  \min \mathcal{F}_\epsilon$. 
\end{lemma}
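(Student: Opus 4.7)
The plan is to show first that the quantity $E(t)=\frac12\int_M m|\nabla u|^2-\vep\int_M m(\log m+V)$ has zero time derivative, and then to recover the stated formula by time–integration, using Lemma \ref{itsmin} to identify $\mathcal B_\vep(m_0,m_1)$ with $\cF_\vep(m,\nabla u)$.

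For the first step I differentiate $E(t)$ in time, splitting the two contributions. The continuity equation gives
\[
\frac{d}{dt}\int_M \frac{m}{2}|\nabla u|^2=\int_M \mathrm{div}_g(m\nabla u)\,\tfrac12|\nabla u|^2+\int_M m\,\nabla u\scalg\nabla(\partial_t u),
\]
and I would substitute $\partial_t u=\frac12|\nabla u|^2-\vep(\log m+V)$ from the Hamilton–Jacobi equation into $\nabla\partial_t u$. After one integration by parts (Stokes' theorem plus the Leibniz rule for $\mathrm{div}_g$, both recalled in Section \ref{notations}), the two terms involving $\nabla(\tfrac12|\nabla u|^2)$ cancel, leaving
\[
\frac{d}{dt}\int_M \frac{m}{2}|\nabla u|^2=-\vep\int_M \nabla u\scalg\nabla m-\vep\int_M m\,\nabla u\scalg\nabla V.
\]
Independently, using the continuity equation in the form $\partial_t m=\mathrm{div}_g(m\nabla u)$ and integrating by parts,
\[
\frac{d}{dt}\int_M m(\log m+V)=\int_M(\log m+1+V)\,\mathrm{div}_g(m\nabla u)=-\int_M \nabla u\scalg\nabla m-\int_M m\,\nabla u\scalg\nabla V,
\]
where the constant $1$ is killed by the divergence. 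Taking the difference with the factor $\vep$ cancels every remaining term and yields $\frac{d}{dt}E(t)=0$.

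For the second step, since $E$ is constant on $[0,T]$,
\[
T\,E(m_0,m_1)=\iinto \tfrac12 m|\nabla u|^2-\vep\iinto m(\log m+V).
\]
Adding and subtracting the entropy term,
\[
T\,E(m_0,m_1)=\Big(\iinto\tfrac12 m|\nabla u|^2+\vep\iinto m(\log m+V)\Big)-2\vep\iinto m(\log m+V)=\cF_\vep(m,\nabla u)-2\vep\iinto m(\log m+V).
\]
By Lemma \ref{itsmin}, $(m,\nabla u)$ minimises $\cF_\vep$, so $\cF_\vep(m,\nabla u)=\mathcal B_\vep(m_0,m_1)$, and dividing by $T$ gives exactly \eqref{bound:E}.

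There is no serious obstacle: the argument is a smooth computation. The only mild care is in the integration by parts, which is legitimate because $M$ has no boundary and all quantities are smooth by hypothesis; in particular the strict positivity of $m$ ensures that $\log m$ and $\nabla m/m$ appearing in $\nabla(\log m+V)$ are well defined, so every boundary-type term drops by Stokes' theorem.
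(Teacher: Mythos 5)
Your computation is correct and is exactly the "straightforward" argument the paper has in mind (the paper omits the proof, remarking only that it is straightforward): differentiating $E$ in time, using the continuity equation and the Hamilton--Jacobi equation together with Stokes' theorem on the boundaryless manifold, and then integrating the constant $E$ over $(0,T)$ and invoking Lemma \ref{itsmin} to identify $\cF_\vep(m,\nabla u)$ with $\mathcal B_\vep(m_0,m_1)$. No gaps.
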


{We observe that the quantity $E$ only depends on the marginals $m_0, m_1$, and is easily estimated. In particular, by Jensen's inequality,  we have $\into m(\log m +V) dx\geq -\log (\nu(M))$, for the measure  $\nu:= e^{-V}dx$. Recalling  Proposition \ref{prop:exist_comp} (and Remark \ref{T-dependence}), we deduce that
\be\label{stimaE}
E(m_0,m_1)\leq \frac1{T^2 \wedge 1}\, C(M,d,\lambda) + 2\vep \log (\into e^{-V}dx) \leq K
\ee
for some constant $K$ which is uniform for all $m_0, m_1\in \cP(M), V\in L^\infty(M)$ and any  $\vep \leq 1$.} 
%$ whenever the measure $\nu:= e^{-V}dx \in \cP( M )$, in which case we have $E(m_0,m_1)\leq \frac1T  {\mathcal B}_\vep(m_0,m_1)$. Otherwise, in general we have
%\be\label{bound:E_B}
%E(m_0,m_1)\leq \frac1T  {\mathcal B}_\vep(m_0,m_1) + \vep c( | e^{-V}|( M ))\,.
%\ee
%Finally, an explicit estimate can be found by computing  $\mathcal{F}_\epsilon$ on suitable arcs joining $m_0$ and $m_1$. As an example, using the Wasserstein geodesic $\gamma_W$ between $m_0,m_1$, we have
%$$
%\frac1T {\mathcal B}_\vep(m_0,m_1) \leq  W_2(m_0, m_1)^2 + \frac \vep T \int_0^T \cH (\gamma_W(t); \nu) \,,
%$$
%and since $\cH (\gamma_W(t); \nu)\leq    \max(\cH (m_0; \nu), \cH(m_1, \nu) ) + \lambda W_2(m_0, m_1) $ (see e.g. \cite{Daneri-Savare}) we conclude that
%$$
%\frac1T {\mathcal B}_\vep(m_0,m_1) \leq  (1+ \vep \lambda) W_2(m_0, m_1)^2 +  c\, \vep \,  (1+ \max(\cH (m_0; \nu), \cH(m_1, \nu) ) )\,.
%$$
%In particular, there exists $C$, only depending on $\cH (m_0; \nu), \cH(m_1, \nu), W_2(m_0, m_1)$ such that
%$$
%E(m_0,m_1)\leq C\,.
%$$
 \subsection{Displacement convexity estimates}

In this section we study the convexity of some energy functional  along the optimal curves of   \rife{func-precise}.  This is obtained in the Eulerian approach by exploiting dissipativity properties of the solutions of system \rife{mfg-log}. We consider smooth solutions, which justifies the computations below; as we will see later, this is no loss of generality, since all solutions will be obtained as limit of classical ones. The following is an extension to the Riemannian setting of the results proved in \cite{GoSe}, (or in \cite{Po} with Neumann conditions); the only new ingredient is provided by the B\"ochner formula \rife{boch}.

\begin{proposition}\label{prop:conv}
Let $u\in C^2(\overline{Q}_T)$ and $m\in C^1(\overline{Q}_T)$ be classical solutions to the system \rife{mfg-log}, where $V\in W^{2,\infty}( M )$.
%\begin{equation}
%\label{weak:1st_order:0}
%\begin{cases}
%    -u_t+\frac{1}{2}\abs{\nabla u}^2=\epsilon \log(m)+V(x) \quad &\text{in $Q\coloneqq(0,T)\times M $}\\
%    m_t-div(m\nabla u)=0 &\text{in $Q$}\\
%    m(0,\cdot)=m_0, \quad m(T,\cdot)=m_1 &\text{in $ M $}
%\end{cases}
%\end{equation}
%where $V\in W^{1,\infty}( M ).$\\

Let $U:(0,+\infty)\rightarrow\R$ be a $C^1$ function such that
\begin{equation*}
    P(r)\coloneqq U'(r)r-U(r)\ge0
\end{equation*}
Then 
\begin{equation}\label{disco}
\begin{split}
    \frac{d^2}{dt^2}\int_ M  U(m)\,dx\ge&\int_ M \left[P'(m)m-(1-\frac{1}{d})P(m)\right](\Delta_gu)^2\,dx\\
    &+\int_ M  P(m)Ricc_g(\nabla u,\nabla u)\,dx+\\
    &+\int_ M  \epsilon\frac{P'(m)}{m}\abs{\nabla m}^2\,dx  + \vep \int_ M  P'(m) \nabla m \scalg \nabla V\,dx 
 %   -\int_ M  P(m)\Delta V\,dx.
\end{split}\end{equation}
\end{proposition}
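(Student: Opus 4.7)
The plan is to differentiate the functional $t\mapsto \int_M U(m(t))\,dx$ twice in time, using both equations of \rife{mfg-log} and the B\"ochner formula \rife{boch} to process the Hessian terms that arise. A key algebraic observation that will be used throughout is that $U''(r)r = P'(r)$, so that $P'(m)\nabla m = \nabla P(m)$.

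First I would compute the first derivative. By the continuity equation and integration by parts,
\begin{equation*}
\frac{d}{dt}\int_M U(m)\,dx = \int_M U'(m)\,\mathrm{div}_g(m\nabla u)\,dx = -\int_M \nabla P(m)\scalg \nabla u\,dx = \int_M P(m)\Delta_g u\,dx.
\end{equation*}
Differentiating once more in time gives
\begin{equation*}
\frac{d^2}{dt^2}\int_M U(m)\,dx = \int_M P'(m)\,\mathrm{div}_g(m\nabla u)\,\Delta_g u\,dx + \int_M P(m)\Delta_g(\partial_t u)\,dx.
\end{equation*}
In the first summand I would expand $\mathrm{div}_g(m\nabla u) = \nabla m \scalg \nabla u + m\Delta_g u$, getting the contribution $\int_M P'(m) m(\Delta_g u)^2$ plus a term $\int_M \nabla P(m)\scalg \nabla u\,\Delta_g u$. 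In the second summand I would substitute $\partial_t u = \tfrac12|\nabla u|^2 - \epsilon(\log m + V)$ and apply the B\"ochner formula \rife{boch}, obtaining
\begin{equation*}
\int_M P(m)\Delta_g(\partial_t u) = \int_M P(m)\bigl[|\nabla^2 u|^2 + \nabla(\Delta_g u)\scalg \nabla u + Ricc_g(\nabla u,\nabla u)\bigr] - \epsilon\int_M P(m)\Delta_g(\log m + V)\,dx.
\end{equation*}

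The key cancellation is then carried out: the two mixed terms combine through the Leibniz rule and an integration by parts as
\begin{equation*}
\int_M \nabla P(m)\scalg \nabla u\,\Delta_g u + \int_M P(m)\nabla(\Delta_g u)\scalg \nabla u = \int_M \nabla(P(m)\Delta_g u)\scalg \nabla u = -\int_M P(m)(\Delta_g u)^2\,dx.
\end{equation*}
Together with the pointwise bound $|\nabla^2 u|^2 \geq \tfrac{1}{d}(\Delta_g u)^2$ (from Cauchy--Schwarz on the eigenvalues of the symmetric Hessian, where $d = \dim M$), the terms carrying $(\Delta_g u)^2$ collapse to the coefficient $P'(m)m - (1-\tfrac1d)P(m)$ announced in \eqref{disco}, while the $Ricc_g$ term is already in the desired form. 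The hypothesis $P(m)\geq 0$ makes the use of the pointwise bound legitimate under the integral.

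Finally, I would handle the entropic contribution $-\epsilon\int_M P(m)\Delta_g(\log m + V)\,dx$ by one last integration by parts: it yields $\epsilon\int_M P'(m)\nabla m\scalg \nabla (\log m)\,dx + \epsilon\int_M P'(m)\nabla m\scalg \nabla V\,dx$, the first of which equals $\epsilon\int_M \tfrac{P'(m)}{m}|\nabla m|^2\,dx$, matching exactly the two remaining terms on the right-hand side of \eqref{disco}. The main technical point to watch is the bookkeeping of signs and the repeated integrations by parts on the manifold; the smoothness assumptions $u\in C^2(\overline{Q}_T)$, $m\in C^1(\overline{Q}_T)$ and $V\in W^{2,\infty}(M)$ justify every step, and in particular validate the use of the B\"ochner identity applied to $u$.
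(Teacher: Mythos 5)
Your proof is correct and follows essentially the same route as the paper's: compute the first time derivative via the continuity equation and integration by parts to get $\int_M P(m)\Delta_g u\,dx$, differentiate again, apply B\"ochner to $\Delta_g(\tfrac12|\nabla u|^2)$, use $|\nabla^2 u|^2\ge\tfrac1d(\Delta_g u)^2$ together with $P\ge0$, and integrate the entropic term $-\epsilon\int_M P(m)\Delta_g(\log m+V)$ by parts. The only cosmetic difference is that the paper first converts $\int_M \nabla P(m)\scalg\nabla u\,\Delta_g u$ via integration by parts and lets the resulting $-\int_M P(m)\nabla(\Delta_g u)\scalg\nabla u$ cancel the matching B\"ochner term, whereas you group the two mixed terms as $\int_M \nabla(P(m)\Delta_g u)\scalg\nabla u$ before integrating by parts — the same cancellation, packaged differently.
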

\begin{proof}
    We begin by calculating the first derivate of the function $t\rightarrow \int_ M  U(m)\,dx$,
\begin{equation*}
\begin{aligned}
    \frac{d}{dt}\int_ M  U(m)\,dx&=\int_ M  U'(m)\partial_t m\,dx\\
                                    &=\int_ M  U'(m)(m\Delta_g u+\nabla m\scalg \nabla u)\,dx\\
                                    %&=\int_ M  U'(m)m\Delta_g u\,dx-\int_ M  U(m)\Delta_g u\,dx\\
                                    &=\int_ M  P(m)\Delta_g u\,dx
\end{aligned}\end{equation*}
recalling that $P(r)= U'(r)r - U(r)$. So, the second derivative takes the form 
\begin{align*}
    \frac{d^2}{dt^2}\int_ M  U(m)\,dx=&\int_ M  P'(m)\partial_t m\Delta_g u+P(m)\Delta_g (\partial_tu) \,dx \\                           
                                    =&\int_ M  P'(m)(m\Delta_g u+\nabla m\scalg \nabla u)\Delta_g u\,dx+\\
                                    &+\int_ M  P(m)\Delta_g(\frac{1}{2}\abs{\nabla u}^2-\epsilon (\log(m)+V))\,dx
                                    \\  = &  \int_ M [P'(m)m-P(m)](\Delta_g u)^2\,dx-\int_ M  P(m)\nabla (\Delta_g u)\scalg \nabla u\,dx
                                    \\ & +\int_ M  P(m)\Delta_g(\frac{1}{2}\abs{\nabla u}^2-\epsilon (\log(m)+V))\,dx
                                    \,.
                                    \end{align*}
Now we use B\"{o}chner's formula \eqref{boch} to calculate $\Delta_g(\frac{1}{2}\abs{\nabla u}^2)$, obtaining
\begin{align*}
  \frac{d^2}{dt^2}\int_ M  U(m)\,dx= 
                                   &\int_ M [P'(m)m-P(m)](\Delta_g u)^2\,dx\\
                                    &+\int_ M  P(m)[tr( (\nabla^2u)^2)+Ricc_g(\nabla u, \nabla u)]\,dx\\
                                    &+\int_ M  \epsilon\frac{P'(m)}{m}\abs{\nabla m}^2dx+ \vep \int_ M  P'(m) \nabla m \scalg \nabla V\,dx.\\
\end{align*}
Since, by the symmetry of $\nabla^2u$, it holds $tr( (\nabla^2u)^2)\ge\frac{1}{d}(tr(\nabla^2u))^2=\frac{1}{d}(\Delta_gu)^2$, we get \rife{disco}.
%\begin{equation*}
%\begin{aligned}
%    \frac{d^2}{dt^2}\int_ M  U(m)\,dx\ge&\int_ M \left[P'(m)m-(1-\frac{1}{d})P(m)\right](\Delta_gu)^2\,dx\\
%    &+\int_ M  P(m)Ricc_g(\nabla u,\nabla u)\,dx+\\
%    &+\int_ M  \epsilon\frac{P'(m)}{m}\abs{\nabla m}^2\,dx+ \vep \int_ M  P'(m) \nabla m \scalg \nabla V\,dx.
%\end{aligned}\end{equation*}
\end{proof}

%\begin{comment}
    
In particular, the inequality \rife{disco} implies the semi-convexity of the $\log$-entropy along the optimal curve $m(t)$, and   the strict convexity of the relative entropy whenever $Ricc_g + D^2 V\geq 0$.  
%We recall that the quantity $E(m_0,m_1)$, In what follows, we denote
%\begin{equation}\label{E}
%E(m_0,m_1)\coloneqq\int_ M \frac{1}{2}m\abs{\nabla u}^2-\epsilon \into m(\log m +V) dx
% \end{equation}
% which is a constant due to Lemma \ref{HS}. 

\begin{corollary}\label{corollary:semi_conv:log}
    Under  the assumptions of Proposition \ref{prop:conv}, let $\cH(m(t); \nu)$ be the relative entropy defined in \rife{rel-entr}, for $\nu=e^{-V}dx$. Then we have
     \be\label{BK+displa}
\begin{split}
\frac{d^2}{dt^2} \cH(m(t); \nu) & \geq \into m (Ricc_g(\nabla u,\nabla u)+ D^2 V(\nabla u,\nabla u)) \, dx  \\ & \qquad\qquad + \vep \into |\nabla (\log m + V)|^2\, m\, dx\,.
\end{split}
     \ee
     Moreover,  let $\lambda\geq 0$ satisfy \rife{ricci},  and define 
     $$
     \vfi(t)= \int_ M  m(t)\log m(t)\, dx\,.
     $$ 
Then we have:
     \begin{itemize}
     \item[(i)] there exists a constant 
     $\Lambda_\vep$, depending on $M,d,\lambda,  \|V\|_{W^{1,\infty}}, T, \vep$, such that $\vfi$ is $\Lambda_\vep -$ semiconvex in $(0,T)$, hence
     %$C=C(\lambda, \|V\|_{W^{1,\infty}},  M )$ such that
     \begin{align}
     \label{bound:semi_conv:log}
         \varphi(t)\le\frac{T-t}{T}\varphi(0)+\frac{t}{T}\varphi(T)+\Lambda_\vep \frac{t(T-t)}{2T^2}
         %\left(-2\lambda E_\vep(m_0,m_1)+\epsilon C\right).
     \end{align}
  Moreover, the constant $\Lambda_\vep$ is bounded independently of $\vep$ (for $\vep\leq 1$),  and we have $\Lambda_\vep \,  \mathop{\to}\limits^{\vep \to 0} \, \frac{\lambda}TW_2(m_0,m_1)^2$.
%     
%     {\color{red}Finally if there exist a couple $(\Tilde{m},\Tilde{u})\in C^1(\overline{Q})\times C^2(\overline{Q})$  of classical solutions to the system \rife{mfg-log} for $\vep_0>\vep$, then the constant $C$ in \eqref{bound:semi_conv:log} can be chosen independent of $\vep<\vep_0$.}
\vskip0.4em
\item[(ii)] there exists a constant $L=L(M,d,\lambda,  \|V\|_{W^{1,\infty}}, T)$ such that
\be\label{loc_bound_entropy}
\vfi(t) \leq  d\, | \log (t(T-t)) | + \frac d2 \, |\log \vep |+ L  \qquad \forall t \in (0,T)\,. 
\ee
\end{itemize}
\end{corollary}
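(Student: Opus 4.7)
\textbf{Paragraph 1 (derivation of \rife{BK+displa}).}
First I would apply Proposition \ref{prop:conv} with $U(r)=r\log r$, for which $P(r)=r$, $P'(r)=1$ and the coefficient $P'(m)m-(1-\tfrac{1}{d})P(m)=m/d\ge 0$; discarding the nonnegative $(\Delta_g u)^2$ contribution yields
\begin{equation*}
\tfrac{d^2}{dt^2}\into m\log m\,dx\ge\into m\,Ricc_g(\nabla u,\nabla u)\,dx+\vep\into\tfrac{|\nabla m|^2}{m}\,dx+\vep\into\nabla m\scalg\nabla V\,dx.
\end{equation*}
Separately, I would compute $\tfrac{d^2}{dt^2}\into mV\,dx$ directly from the continuity equation and the HJ equation: after two integrations by parts, the cross terms $(\nabla^2 u)(\nabla V,\nabla u)$ arising respectively from the divergence of $m\nabla u$ and from $\nabla(\tfrac12|\nabla u|^2)$ cancel thanks to the product identity of Section \ref{notations}, leaving
\begin{equation*}
\tfrac{d^2}{dt^2}\into mV\,dx=\into m(D^2V)(\nabla u,\nabla u)\,dx+\vep\into\nabla m\scalg\nabla V\,dx+\vep\into m|\nabla V|^2\,dx.
\end{equation*}
Summing the two identities and using the pointwise identity $\tfrac{|\nabla m|^2}{m}+2\nabla m\scalg\nabla V+m|\nabla V|^2=m|\nabla(\log m+V)|^2$ produces \rife{BK+displa}.

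\textbf{Paragraph 2 (part (ii)).}
Starting from the above with only $Ricc_g\ge-\lambda|\cdot|^2$, and absorbing the $V$-cross term by Young's inequality at the cost of half the Fisher information, I would get
\begin{equation*}
\varphi''(t)\ge-\lambda\into m|\nabla u|^2\,dx+\tfrac{\vep}{2}\into\tfrac{|\nabla m|^2}{m}\,dx-C_0.
\end{equation*}
The Sobolev+Poincar\'e estimate \rife{fish} already established in the proof of Proposition \ref{prop:exist_comp} gives $\into|\nabla m|^2/m\ge c_1 e^{2\varphi(t)/d}-c_2$, while Lemma \ref{HS} combined with the universal bound \rife{stimaE} on $E$ yields $\into m|\nabla u|^2\le 2K+2\vep(\varphi(t)+\|V\|_\infty)$. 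Substituting produces the autonomous ODI
\begin{equation*}
\varphi''(t)\ge\tfrac{\vep c_1}{2}e^{2\varphi(t)/d}-2\lambda\vep\,\varphi(t)-C_1,
\end{equation*}
with $C_1$ uniform in $\vep\le 1$. I would then compare $\varphi$ with the explicit barrier $B(t):=d|\log(t(T-t))|+\tfrac{d}{2}|\log\vep|+L$: a direct computation shows that for $L$ large enough (depending only on $M,d,\lambda,\|V\|_{W^{1,\infty}},T$), $B''(t)\le\tfrac{\vep c_1}{2}e^{2B(t)/d}-2\lambda\vep B(t)-C_1$ on $(0,T)$, and $B\to+\infty$ at the endpoints while $\varphi(0),\varphi(T)$ are finite. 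The strict monotonicity of $y\mapsto\tfrac{\vep c_1}{2}e^{2y/d}-2\lambda\vep y$ for $y$ large, applied at any hypothetical interior maximum of $\varphi-B$ where $(\varphi-B)''\le 0$, gives a contradiction, hence $\varphi\le B$ on $(0,T)$, which is \rife{loc_bound_entropy}.

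\textbf{Paragraph 3 (part (i)).}
For the semiconvexity, I would use \rife{BK+displa} retaining the Ricci and $D^2V$ bounds but discarding the nonnegative Fisher-information term:
\begin{equation*}
\varphi''(t)\ge-\bigl(\lambda+\|V\|_{W^{2,\infty}}\bigr)\into m|\nabla u|^2\,dx-C_V.
\end{equation*}
Substituting $\into m|\nabla u|^2=2E+2\vep(\varphi(t)+\into mV\,dx)$ from Lemma \ref{HS}, with $E\le K$ by \rife{stimaE} and with $\vep\varphi(t)$ controlled uniformly by (ii) (since $\vep|\log\vep|$ is bounded for $\vep\le 1$ and $\varphi(0),\varphi(T)$ are finite for smooth marginals), one obtains a pointwise inequality $\varphi''(t)\ge-\Lambda_\vep/T^2$, with $\Lambda_\vep$ depending only on $M,d,\lambda,\|V\|_{W^{1,\infty}},T,\vep$ and bounded uniformly for $\vep\le 1$. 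Integrating twice with boundary data $\varphi(0),\varphi(T)$ yields \rife{bound:semi_conv:log}. The limit $\Lambda_\vep\to\tfrac{\lambda}{T}W_2(m_0,m_1)^2$ is identified by letting $\vep\to 0$ in Lemma \ref{HS}: the $\vep$-entropy correction $\tfrac{2\vep}{T}\iinto m(\log m+V)$ vanishes thanks to (ii), so that $E\to\mathcal{B}_0/T$ and then the Benamou--Brenier identity $\mathcal{B}_0=W_2^2(m_0,m_1)/(2T)$ provides the announced limit.

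\textbf{Main obstacle.}
The technical heart lies in the barrier argument of (ii): the $\vep$-dependence inside the exponential feedback must be matched exactly by the $\tfrac{d}{2}|\log\vep|$ offset in $B(t)$, and the blow-up rate $d|\log(t(T-t))|$ must be tuned so that $B''(t)$, which also scales like $(t(T-t))^{-2}$, stays below $\tfrac{\vep c_1}{2}e^{2B/d}$ with a constant $L$ independent of $\vep\in(0,1]$. Once \rife{loc_bound_entropy} is established, assertion (i) follows by essentially routine integration of a pointwise lower bound on $\varphi''$.
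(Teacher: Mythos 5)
Paragraphs~1 and~2 are essentially correct and run parallel to the paper's proof: for \rife{BK+displa} you correctly combine Proposition~\ref{prop:conv} applied with $P(r)=r$ (yielding \rife{Plog}) together with a direct computation of $\frac{d^2}{dt^2}\into mV$ in which the $(\nabla^2u)(\nabla u,\nabla V)$ cross terms cancel, and the completion-of-square identity recovers the relative Fisher information; for part~(ii) the barrier comparison with a function blowing up logarithmically at $t=0,T$ is the same idea as the paper (which uses the smooth $\psi(t)=-d\log(\sqrt{\vep}\,t(T-t))+L$ rather than $d|\log(t(T-t))|$, avoiding the kink where $t(T-t)=1$), and the monotonicity-at-maximum argument is sound.

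Paragraph~3, however, has a genuine gap. You start from \rife{BK+displa}, discard the Fisher-information term, and bound $Ricc_g+D^2V\geq -(\lambda+\|V\|_{W^{2,\infty}})I_g$, which picks up a coefficient $\lambda+\|V\|_{W^{2,\infty}}$ in front of $\into m|\nabla u|^2$. This is incompatible with two claims in the statement: that $\Lambda_\vep$ depends only on $\|V\|_{W^{1,\infty}}$ (not $W^{2,\infty}$), and that $\Lambda_\vep\to\frac{\lambda}{T}W_2(m_0,m_1)^2$ (which requires the coefficient to be exactly $\lambda$, not $\lambda+\|D^2V\|$). Worse, after substituting $\into m|\nabla u|^2=2E+2\vep\into m(\log m+V)$, you are left needing a uniform bound on $\vep\varphi(t)$ over $t\in(0,T)$. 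Your appeal to part~(ii) does not deliver this: \rife{loc_bound_entropy} gives $\vep\varphi(t)\leq \vep d\,|\log(t(T-t))|+\tfrac{d\vep}{2}|\log\vep|+\vep L$, whose first term is unbounded as $t\to 0,T$ (the coefficient $\vep$ being fixed, this does not tend to zero). Invoking finiteness of $\varphi(0),\varphi(T)$ for smooth marginals would then require the semiconvexity you are trying to prove, which is circular, and would in any case make $\Lambda_\vep$ depend on $\|m\|_{L^\infty(Q_T)}$. The paper avoids all of this by working directly from \rife{Plog} (which involves only $\nabla V$), absorbing $\vep\into\nabla m\scalg\nabla V$ by Young at half the cost of the Fisher term, and then \emph{retaining} the Fisher term: after the Sobolev estimate \rife{fish}, the bracket $\vep\bigl(-2\lambda\varphi+c_3e^{2\varphi/d}\bigr)$ is bounded below by $\vep$ times the finite minimum of $r\mapsto-2\lambda r+c_3e^{2r/d}$ on $[0,\infty)$, so no bound on $\varphi$ itself is needed. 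This single observation gives $\varphi''\geq -2\lambda K-\vep C$ directly, with constants depending only on the stated data, and also yields the precise form $\Lambda_\vep\simeq \vep C(1+|\log\vep|)+2\tfrac{\lambda}{T}\min\cF_\vep$ needed for the identification of the limit.
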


\begin{proof}
    %The independence from time of the quantity $E(m_0,m_1)$ follows from the system \eqref{weak:1st_order} and the divergence theorem.\\
    We use Proposition \ref{prop:conv} with $U(r)=r\log r -r$ (so that $P(r)=r$) and we get
\be\label{Plog}\begin{split}
   \frac{d^2}{dt^2}\int_ M  m\log m= &   \frac{d^2}{dt^2}\int_ M  (m\log m-m)\,dx\ge \int_ M  mRicc_g(\nabla u,\nabla u)\,dx+\\
    &\qquad +\int_ M  \epsilon\frac{1}{m}\abs{\nabla m}^2\,dx+ \vep \int_ M  \nabla m\scalg \nabla V\,dx\,.
    \end{split}
    \ee
Similarly, we compute
\begin{align*}
 \frac{d^2}{dt^2}\int_ M  m\,V = &  \frac{d}{dt }\int_ M  \partial_t m\,V = -  \frac{d}{dt } \into m\, \nabla V\scalg \nabla u \\
 & = - \into (\nabla V\scalg \nabla u) div_g(m \nabla u) - \into m \, \nabla V\scalg \nabla ( \frac12 |\nabla u|^2- \vep (log(m) + V))
 \\
 & = \into m\, D^2 V (\nabla u, \nabla u) + \vep \into m \, \nabla V\scalg \nabla  (log(m) + V)\,.
\end{align*}    
Adding this equality to \rife{Plog}, we obtain \rife{BK+displa}.    
    
Now, if we come back to \rife{Plog} and use the lower bound on $Ric_g$, we get        
$$
 \frac{d^2}{dt^2}\int_ M  m\log m \geq -\lambda\int_ M  m\abs{\nabla u}^2\,dx+\frac\vep 2 \int_ M   \frac{1}{m}\abs{\nabla m}^2\,dx- \frac\vep 2\int_ M    m|\nabla V|^2\,dx\,.
$$
%    where we used that $Ric( M )\geq -\lambda $. 
By  definition  of the quantity $E(m_0,m_1)$ we obtain
    \be\label{serve}
    \begin{split}   \frac{d^2}{dt^2}\int_ M  m\log m \ge
%    &-\lambda\int_ M  m\abs{\nabla u}^2\,dx+\\
%    &+\int_ M  \epsilon\frac{1}{m}\abs{\nabla m}^2\,dx-\int_ M  m\Delta_g V\,dx\\
%    \ge
    &- 2\lambda E(m_0,m_1)-2\lambda\epsilon  \int_ M  m (\log m +V)  dx\\
    &+\frac\vep 2 \int_ M  \frac{1}{m}\abs{\nabla m}^2\,dx- \frac\vep2 \int_ M    m|\nabla V|^2\,dx
\\
    \ge&-2\lambda E(m_0,m_1)-2\lambda\epsilon\int_ M  m\log m  dx\\
    &+\frac\vep 2 \int_ M  \frac{1}{m}\abs{\nabla m}^2\,dx- \vep \, c(\lambda, \|V\|_{W^{1,\infty}})  .
\end{split}
\ee
We estimate the Fisher information of $m$ as in Proposition \ref{prop:exist_comp}, see \rife{fish}:
$$
\int_ M  \frac{1}{m}\abs{\nabla m}^2\,dx\geq c_1 \exp\left( \frac2d\int_ M  m\log m \right) - c_2 Vol(M)\,.
$$
%By Sobolev and Poincaré-Wirtinger inequality we have (for $2^*=\frac{2d}{d-2}$ if $d>2$, or $2^*$ any sufficient large number if $d=2$) 
%\begin{align*}
%   \int_ M \frac{1}{m}\abs{\nabla m}^2\,dx&=  4\int_ M \abs{\nabla \sqrt{m}}^2\,dx \ge C_S\left(\int_ M \abs{\sqrt{m}-Vol( M )^{-1}\!\!\int_ M \sqrt{m}\,dx}^{2^*}dx\right)^{\frac{2}{2^*}}\\
%    &\ge c_1(\int_ M \sqrt{m}^{2^*}dx)^\frac{2}{2^*}-c_2 (\int_ M \sqrt{m}\,dx)^2\\
%    &\ge c_1(\int_ M \sqrt{m}^{2^*}dx)^\frac{2}{2^*}-c_2  Vol( M )
%\end{align*}
%where we used H\"older inequality in the last passage. By the concavity of the $\log$ function and Jensen inequality for the probability measure $m$
%\begin{align*}
%     \log\left(\int_ M \frac{1}{m}\abs{\nabla m}^2\,dx+c_2 Vol( M )\right)&\ge \frac{2}{2^*}\log\left(\int_ M \sqrt{m}^{2^*}dx\right)+\log(c_1)\\
%    &\ge \frac{2}{2^*}\int_ M \log\left( m^{\frac{2^*-2}2}\right)mdx+\log(c_1)\\
%    &= \frac{2}{d}\int_ M    \log\left(m\right)mdx+\log(c_1)\,.
%\end{align*}
Therefore, if $\varphi(t)\coloneqq\int_ M  m\log m \,dx$,  we deduce
\begin{equation}\label{2nd-order-semiconv-vep}
    \varphi''\ge-2\lambda E(m_0,m_1)+\epsilon( -2\lambda\varphi+c_3e^{\frac2 d\varphi})- \vep \, c(\lambda, \|V\|_{W^{1,\infty}},  M ) \,,
\ee
for some constant $c_3$.  We note that the function $r\to -2\lambda r+ c_3 e^{\frac2d r}$ has a finite minimum on $[0, +\infty)$, and that $E(m_0,m_1)$ is bounded above by some constant $K$ only depending on $M,d,\lambda, T, \|V\|_\infty$, see \rife{stimaE}. Hence we have
$$
 \varphi''\ge-2\lambda K-\epsilon C(\lambda, \|V\|_{W^{1,\infty}},  M )
 $$
 which gives the semiconvexity of the   entropy  along the optimal curves, with a semi-convexity constant $\Lambda_\vep $ which is bounded  uniformly for $\vep\leq 1$. In a more precise form,  on account  of \rife{bound:E} we can estimate
 $$
 \Lambda_\vep = \vep C+ 2\lambda E(m_0, m_1) \simeq \vep C (1+ |\log \vep |) + 2\frac {\lambda }T \min(\cF_\vep)\,.
 $$
 As we will prove in Section \ref{sec6}, it holds that $\min(\cF_\vep) \to \frac12W_2(m_0,m_1)^2$; hence we deduce 
 $$
 \Lambda_\vep  \, \mathop{\to}^{\vep \to 0} \, \,  \frac {\lambda }TW_2(m_0,m_1)^2\,.
 $$
%By maximum principle we get
%$$
%         \sup_{t\in[0,T]}\varphi(t)\le\max\biggl( \int_ M  m_0\log m_0  \,dx,\int_ M  m_1\log m_1  \,dx, C) \biggr)
%         $$
%%         \\
%%         &\frac{C}{\epsilon}\bigl(\norma{V}_{W^{2,\infty}}+E(m_0,m_1)+C_ M \bigr)\biggr)
%%     \end{align*}
%     where $C= C(\lambda, \|V\|_{W^{1,\infty}},  M , E(m_0,m_1))$.  
%     
%     {\color{red}We note that the function $r\to -2\lambda r+ c_3 e^{c_4 r}$ has a finite minimum on $[0, +\infty)$, hence we deduce
%     $$
%     \varphi''\ge-2\lambda E_\vep(m_0,m_1)+\epsilon C(\lambda, \|V\|_{W^{1,\infty}},  M )
%     $$
%     which gives the semiconvexity of the  log-entropy  along the optimal curves, with a semi-convexity constant $\Lambda$ which is independent of $\vep\leq 1$. 
%     
%     Finally, by \eqref{bound:E} and Jensen inequality, we get
%\begin{align*}
%E_\vep(m_0,m_1))&\le\frac1T\mathcal{F}_{\epsilon_0}(\Tilde{m},\Tilde{v})+\frac{\vep-\vep_0}{T}\iint_Q m([\log m-1]+V)+ \vep c( | e^{-V}|( M ))\\
%                &\le \frac1T\mathcal{F}_{\epsilon_0}(\Tilde{m},\Tilde{v})+C\left(\vep_0,T,| e^{-V}|( M )\right)\,.
%\end{align*}}  

Now we also obtain a local bound for the entropy, independently from the initial and terminal marginals. Indeed, we deduce from \rife{2nd-order-semiconv-vep} that
$$
 \varphi''\ge   \vep\, c_4  e^{\frac2 d\varphi}- c_5 \qquad t\in (0,T),
$$
for some constant $c_4, c_5$ depending on $M,d,\lambda, \|V\|_{W^{1,\infty}}, T, \vep$ (and uniform for $\vep\leq 1$). With a suitable choice of $L$ (depending on $ c_4,c_5, T$), we have that the function 
$$
\psi(t):= -d \log ( \sqrt\vep\, t(T-t)) + L
$$
is a supersolution of the same equation, i.e.  $\psi''\le   \vep\, c_4  e^{\frac2 d\psi}- c_5$ for $t\in (0,T)$. Since $\psi$ blows-up at $t=0, t=T$, we conclude by comparison that $\vfi\leq \psi$, which  gives \rife{loc_bound_entropy}.
\end{proof}

\subsection{The optimality system as an elliptic equation}\label{1st-to-2nd-order}

System \rife{mfg-log} can be recasted as a single elliptic equation,  in time-space variables,  for $u$. This comes by noting that  $m e^V= \exp(\frac1\vep(\frac{|\nabla u|^2}2-\partial_t u))$, which can be inserted in the continuity equation, giving rise to a quasilinear elliptic equation in divergence form for $u$. 

This is in fact a special case of a general approach suggested by P-L. Lions in his  lectures at Coll\`ege de France \cite{L-college}, in order to handle mean-field game  systems of first order, such as
 \begin{equation}
\label{elliptic:1st_order}
\left\{
\begin{aligned}
    &-\partial_t u+\frac{1}{2}\abs{\nabla u}^2= f(m)  +V \\
    &\partial_t m-m\Delta_gu-\nabla u\scalg \nabla m=0
\end{aligned}
\right.
\end{equation}
whenever $f$ is an increasing function.  For the reader's convenience, we derive here this equation in the Riemannian context. To this purpose, we first compute the  
covariant gradient of both terms in the Hamilton-Jacobi equation:
\begin{equation*}
\begin{split}
    f'(m) \nabla m
    %&=\phi'\left(-\partial_t u+\frac{\abs{\nabla u}^2}{2}-V(x)\right)D\left(-u_t+\frac{\abs{\nabla u}^2}{2}-V\right) \\
    &= \nabla \left(-\partial_t u+\frac{\abs{\nabla u}^2}{2}-V\right)\,.
%    \\
%    &=  -\nabla u_t+\nabla^2u\scalg \nabla u-\nabla V ,\\    
\end{split}
\end{equation*}
%where the last equality is due to the compatibility of the Levi-Civita connection $D$ with the metric $g$\footnote{\blue ha ancora senso oppure lo aveva solo nella scrittura di forme differenziali?}. 
Taking the scalar product with $\nabla u$ we get
$$
 f'(m) \nabla m \scalg \nabla u= \nabla \left(-\partial_t u+\frac{\abs{\nabla u}^2}{2}-V\right)\scalg \nabla u\,.
$$
Similarly, by taking the time derivative from the Hamilton-Jacobi equation, we have:
\begin{equation}\label{measure:deriv_time}
\begin{split}
    f'(m) \partial_t m
    %&=\phi'\left(-\partial_t u+\frac{\abs{\nabla u}^2}{2}-V\right)\frac{\partial}{\partial t}\left(-\partial_t u+\frac{\abs{\nabla u}^2}{2}-V\right) \\
    &= \frac{\partial}{\partial t}\left(-\partial_t u+\frac{\abs{\nabla u}^2}{2}-V\right)\\
    &=  -\partial_{tt} u +\nabla (\partial_t u) \scalg \nabla u\,.\\    
\end{split}
\end{equation}
From the above equalities, merged with the continuity equation for $m$, we obtain
 \be\label{generica}\begin{split}
   -\partial_{tt} u +\nabla (\partial_t u)\scalg \nabla u -  \nabla \left(-\partial_t u+\frac{\abs{\nabla u}^2}{2}-V\right)\scalg \nabla u  &   =   f'(m) \left( \partial_t m -\nabla m\scalg \nabla u\right) \\
& =f'(m) \,m \Delta_g u 
%\\
%    &\qquad\qquad = =\frac{1}{(f^\epsilon)'(m)}D\left(-u_t+\frac{\abs{\nabla u}^2}{2}-V\right)\\
%    &=\frac{m}{\epsilon}\left(-\nabla u_t+\nabla^2u\scalg \nabla u-\nabla V\right),\\    
%\end{split}
\end{split}
\ee
which becomes a second order equation in the only unknown  $u$. Indeed, by setting  $\phi\coloneqq (f)^{-1}$, the first equation of the system reads as
\begin{equation*}
    m=\phi\left(-\partial_t u+\frac{\abs{\nabla u}^2}{2}- V\right)\,,
\end{equation*}
and so \rife{generica} becomes  a single equation for $u$.
%:
%$$
% -u_{tt}+ 2\nabla u_t\scalg \nabla u -  \nabla u\scalg(\nabla^2u\scalg \nabla u) + \nabla u \scalg \nabla V= f'(m)m\Delta_g u\,,\quad m= \phi\left(-u_t+\frac{\abs{\nabla u}^2}{2}- V\right).
% $$
%
%
%\section{The quasilinear elliptic problem}
%We want to find an equivalent formulation of the system
%\begin{equation}
%\label{elliptic:1st_order}
%\left\{
%\begin{aligned}
%    &u_t+\frac{1}{2}\abs{\nabla u}^2= f(m) + \epsilon \log(m)+V \\
%    &\partial_t m-m\Delta_gu-\nabla u\scalg \nabla m=0
%\end{aligned}
%\right.
%\end{equation}
%which involves an elliptic differential equation of the funciton $u$. \\
In the particular case of $f(m)=\vep  \log m$, we have $f'(m)m=\vep$, and \rife{generica} simplifies further; if we also scale the potential $V$ into  $\vep\, V$ (this is not necessary of course, but it is more consistent with the model of relative entropy), we get to the following form:
\begin{equation}
\label{elliptic:long}
    -\partial_{tt} u  +2\nabla u\scalg \nabla (\partial_t u)-(\nabla^2u)(\nabla u, \nabla u)-\vep\Delta_g u+ \vep \nabla u\scalg \nabla V=0.
\end{equation}
We can shorten such equation by writing it as a quasilinear equation in the space-time variable
\begin{equation}
\label{elliptic:short}
    -tr\left(\mathcal{A}(x,\nabla u)\circ\overline{\nabla}^2u\right)+ \vep \nabla u\scalg \nabla V(x)=0
\end{equation}
where %$\overline{\nabla}^2u$ is the Hessian of $u$ as function on the whole $[0,T]\times  M $ and, for every $x\in  M $ and $\eta\in T_x M $, 
$\mathcal{A}(x,\eta)$ is the endomorphism of $\R\times T_x\! M $ defined by 
\begin{equation*}
    \begin{array}{ccl}
    \R\times T_x\! M &\longrightarrow&\R\times T_x\! M\\
    \relax[\overline{w},w]&\longrightarrow&[-(\eta\scalg w-\overline{w}),(\eta\scalg w-\overline{w})\eta+\vep w]
\end{array}
\end{equation*}   
and, for every $C^2$ function $f$ on  $[0,T]\times  M $, %the composition $\mathcal{A}(x,\eta)\circ\overline{\nabla}^2f$ is between the  operator $\mathcal{A}(x,\eta)$ and,  with a little abuse of notation, 
the endomorphism $\overline{\nabla}^2f$ is given by 
\begin{equation*}
    \begin{array}{ccl}
    \R\times T_x\! M &\longrightarrow&\R\times T_x\! M\\
\relax[\overline{w},w] & \longrightarrow & [\overline{w}\partial_{tt}f+\nabla(\partial_t f)\scalg w, \overline{w}\nabla(\partial_t f)+D_w\nabla f]
\end{array}
\end{equation*}

 Note that $\mathcal{A}(x,\eta)$ is independent of $t\in[0,T]$ and it is symmetric for every choice $(x,\eta)\in  M \times T_x M $. The symbol $\mathcal{A}(x,\eta)$ will denote also the bilinear form induced by such endomorphism through the product metric $\underline{g}$ of the manifold $\R\times M$.  Namely, 
 \begin{align*}
     \mathcal{A}(x,\eta)([\overline{w},w],[\overline{v},v])&\coloneqq\left(\mathcal{A}(x,\eta)[\overline{w},w]\right)\cdot_{\underline{g}}[\overline{v},v]\\
     &=(\eta\scalg w-\overline{w})(\eta\scalg v-\overline{v})+\vep w\scalg v\,.
 \end{align*}
Finally we note that such bilinear form is elliptic (though not uniformly), in fact for every $[\overline{w},w]\in\R\times T_x\! M $ we have
\begin{equation*}
    \mathcal{A}(x,\eta)([\overline{w},w],[\overline{w},w])=(\eta\scalg w-\overline{w})^2+\vep\abs{w}^2>0 \qquad \forall [\overline{w},w]\neq [0,0].
\end{equation*}

\section{Gradient bounds for smooth solutions}

In this section we obtain estimates for smooth solutions of the system \rife{opsys}, by  exploiting the elliptic character of  the quasilinear equation   \rife{elliptic:short}. We mostly follow the ideas developed in \cite{L-college}, \cite{Mu}, \cite{Po}, although specifying to the case of  the entropy nonlinearity allows us to simplify some argument and to give estimates in a more precise form.

As a first step, we derive from  \eqref{elliptic:long}  the differential equations solved by some auxiliary functions of $u$ and its derivatives.

\begin{lemma}\label{lemma:calc}
    Let $u\in C^3([0,T]\times  M )$ be a solution of 
    \begin{equation}\label{eq:lemma_calc}
        -tr\left(\mathcal{A}(x,\nabla u)\circ\overline{\nabla}^2u\right)+\rho u+\vep \nabla u\scalg \nabla V(x)=0
    \end{equation}
    Then
    \begin{enumerate}[label=\roman*)]
        \item for every $K\in\R$, the function $h\coloneqq(u+K)^2$ satisfies
        \begin{align*}
            -tr\left(\mathcal{A}(x,\nabla u)\circ\overline{\nabla}^2h\right) + \vep \nabla u\scalg \nabla V & + 2 \rho u\,(u+K) \\
            & = -2\mathcal{A}(x,\nabla u)\left([\partial_t u ,\nabla u],[\partial_t u,\nabla u]\right).
        \end{align*}
         \item Set $\omega\coloneqq \partial_t u$, then it satisfies
        \begin{align*}
            -tr\left(\mathcal{A}(x,\nabla u)\circ\overline{\nabla}^2\omega\right)+ \vep \nabla \omega\scalg \nabla V+ \rho\omega=&-2\abs{\nabla \omega}^2+2(\nabla^2u)(\nabla u, \nabla \omega).
        \end{align*}
         \item Set $\varphi\coloneqq \frac{1}{2}\abs{\nabla u}^2$, then it satisfies
        \begin{align*}
            -tr\left(\mathcal{A}(x,\nabla u)\circ\overline{\nabla}^2\varphi\right) & + \vep \nabla \varphi\scalg \nabla V + 2\rho\varphi  = \abs{\nabla \varphi}^2 -| \nabla (\partial_t u)|^2  \\
            & - \vep \left( \abs{\nabla^2u}^2+ (\nabla^2V)(\nabla u, \nabla u)+Ricc_g(\nabla u,\nabla u)\right)  \\
        %    &-\nabla (\partial_t u)\scalg \nabla (\partial_t u).\\
        \end{align*}
    \end{enumerate}
\end{lemma}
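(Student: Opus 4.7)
My plan is to expand the equation in standard form and then handle the three parts by the usual strategy of differentiating the equation for $u$ (in the appropriate direction) and regrouping. Written out using the definitions of $\mathcal{A}$ and $\overline{\nabla}^2$, the equation \eqref{eq:lemma_calc} reads
$$
-\partial_{tt} u+2\nabla u\scalg\nabla(\partial_t u)-(\nabla^2u)(\nabla u,\nabla u)-\vep\Delta_g u+\rho u+\vep\nabla u\scalg\nabla V=0,
$$
and more generally, for any $C^2$ function $f$ on $[0,T]\times M$,
$$
tr\bigl(\mathcal{A}(x,\nabla u)\circ\overline{\nabla}^2f\bigr)=\partial_{tt}f-2\nabla u\scalg\nabla(\partial_t f)+(\nabla^2f)(\nabla u,\nabla u)+\vep\Delta_g f.
$$
This is the identity I will apply with $f=h,\omega,\varphi$ respectively.

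For (i), I apply the chain rule to $h=(u+K)^2$: $\partial_{tt}h=2(\partial_t u)^2+2(u+K)\partial_{tt}u$, $\nabla(\partial_t h)=2\partial_t u\,\nabla u+2(u+K)\nabla(\partial_t u)$, $\nabla^2h=2\nabla u\otimes\nabla u+2(u+K)\nabla^2u$, and similarly for $\Delta_g h$. Substituting into $tr(\mathcal{A}(x,\nabla u)\circ\overline{\nabla}^2h)$, the terms proportional to $(u+K)$ assemble into $2(u+K)\,tr(\mathcal{A}(x,\nabla u)\circ\overline{\nabla}^2u)$, which by the equation for $u$ equals $2(u+K)[\rho u+\vep\nabla u\scalg\nabla V]$; the remaining terms collapse to $2(\partial_t u-|\nabla u|^2)^2+2\vep|\nabla u|^2$, which is exactly $2\mathcal{A}(x,\nabla u)([\partial_t u,\nabla u],[\partial_t u,\nabla u])$. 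Rearranging (and using $\nabla h=2(u+K)\nabla u$ to turn $2(u+K)\vep\nabla u\scalg\nabla V$ into $\vep\nabla h\scalg\nabla V$) gives the claim.

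For (ii), I differentiate the equation in $t$. Using $\partial_t(\nabla u)=\nabla\omega$ and $\partial_t(\nabla^2u)=\nabla^2\omega$, the nonlinear term produces $\partial_t[(\nabla^2u)(\nabla u,\nabla u)]=(\nabla^2\omega)(\nabla u,\nabla u)+2(\nabla^2u)(\nabla u,\nabla\omega)$, while $\partial_t[2\nabla u\scalg\nabla(\partial_t u)]=2|\nabla\omega|^2+2\nabla u\scalg\nabla(\partial_t\omega)$. Regrouping so as to identify $tr(\mathcal{A}(x,\nabla u)\circ\overline{\nabla}^2\omega)$ leaves precisely the extra terms $-2|\nabla\omega|^2+2(\nabla^2u)(\nabla u,\nabla\omega)$ on the right-hand side.

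For (iii), which is the main technical step, I take the covariant gradient of the equation and contract with $\nabla u$. The key tools are the identity from Section \ref{notations},
$$
v\scalg\nabla(\nabla f\scalg\nabla u)=(\nabla^2f)(v,\nabla u)+(\nabla^2u)(v,\nabla f),
$$
and the Bochner formula \rife{boch} in the form $\nabla u\scalg\nabla(\Delta_g u)=\Delta_g\varphi-|\nabla^2u|^2-Ricc_g(\nabla u,\nabla u)$. Exploiting $\nabla\varphi\scalg v=(\nabla^2u)(v,\nabla u)$ and $\partial_t\varphi=\nabla u\scalg\nabla\omega$, one finds term by term:
$$
\nabla u\scalg\nabla(-\partial_{tt}u)=-\partial_{tt}\varphi+|\nabla\omega|^2,\qquad \nabla u\scalg\nabla(2\nabla u\scalg\nabla\omega)=2\nabla u\scalg\nabla(\partial_t\varphi),
$$
and, applying the identity above with $f=\varphi$,
$$
\nabla u\scalg\nabla\bigl[-(\nabla^2u)(\nabla u,\nabla u)\bigr]=-(\nabla^2\varphi)(\nabla u,\nabla u)-|\nabla\varphi|^2.
$$
Using the identity with $f=V$ handles the potential term, and Bochner handles the Laplacian. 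Adding everything reconstructs $-tr(\mathcal{A}(x,\nabla u)\circ\overline{\nabla}^2\varphi)+\vep\nabla\varphi\scalg\nabla V+2\rho\varphi$ on one side, with the stated remainder on the other. The main obstacle is bookkeeping: making sure that the $|\nabla\varphi|^2$ produced by the cubic term combines with the right sign against the Bochner contribution, and that the $|\nabla\omega|^2$ from $\partial_{tt}$ survives with the correct coefficient to yield the clean difference $|\nabla\varphi|^2-|\nabla(\partial_t u)|^2$ in the final formula.
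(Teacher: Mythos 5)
Your proposal is correct and follows essentially the same route as the paper: chain rule for (i), time-differentiation of the expanded equation for (ii), and for (iii) a contraction of the differentiated equation against $\nabla u$, combined with the Bochner formula and the identity $\nabla\varphi\scalg v=(\nabla^2u)(v,\nabla u)$. The paper's only cosmetic difference in (iii) is that it first isolates $\partial_{tt}u+\vep\Delta_gu$ and then computes $\partial_{tt}\varphi+\vep\Delta_g\varphi$ by substitution, whereas you apply $\nabla u\scalg\nabla$ directly to the equation; the two are algebraically identical.
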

\begin{proof}
    Equations i) and ii) are straightful computations based on equation \eqref{eq:lemma_calc}. For i) we use the chain rule. For ii) we derive in time equation \eqref{eq:lemma_calc} and we use that $\mathcal{A}(x,\nabla u)$ is independent from time.

        iii) We now study the differential equation solved by $\varphi\coloneqq\tfrac{1}{2}\abs{\nabla u}^2$. We first observe that, if we develop the trace in \rife{eq:lemma_calc}, as we did in \rife{elliptic:long},  we can rewrite the equation as 
%        recall the long formulation of the elliptic differential equation solved by $u$:
%        \begin{equation*}
%    -u_{tt}+2\nabla u_t\scalg \nabla u-\vep\Delta_g u+\nabla u\scalg(\nabla^2u\scalg \nabla u)-\nabla u\scalg \nabla V-\rho u=0
%    \end{equation*}
%    which is equivalent to
    \begin{equation}\label{lemma:3:first_phi}
   \partial_{tt} u+\vep\Delta_g u=2\partial_t \varphi -\nabla u\scalg \nabla \varphi+\vep \nabla u\scalg \nabla V+\rho u
    \end{equation}
    Now, using the Bochner's formula \rife{boch},  which means
    \begin{equation}\label{bochner}
        \Delta_g\varphi=\tfrac{1}{2}\Delta_g\abs{\nabla u}^2=\abs{\nabla^2u}^2+\nabla (\Delta_gu)\scalg \nabla u+Ricc_g(\nabla u, \nabla u)
    \end{equation}
    we get
    \begin{align*}
        \partial_{tt}  \varphi   +\vep\Delta_g\varphi & = 
            \nabla (\partial_t u)\scalg \nabla (\partial_t u) +\nabla u\scalg \nabla (\partial_{tt} u) 
            %+\\ &
            +\vep\left(\nabla (\Delta_gu)\scalg \nabla u +\abs{\nabla^2u}^2+Ricc_g(\nabla u, \nabla u)\right)
            \\
            %&+\vep\left[\abs{\nabla^2u}^2+Ricc_g(\nabla u, \nabla u)\right]\\
        &= \nabla \bigl(\partial_{tt} u +\vep\Delta_gu\bigr)\scalg \nabla u+ |\nabla (\partial_t u)|^2  
        %+\\ &
        +\vep\left[\abs{\nabla^2u}^2+Ricc_g(\nabla u, \nabla u)\right]\,.
        \end{align*}
Then, using \eqref{lemma:3:first_phi}, we have
         \begin{align*}
        \partial_{tt}\vfi   +\vep\Delta_g\varphi  
%        \nabla \bigl(2\varphi_t-\nabla u\scalg \nabla \varphi+\nabla u\scalg \nabla V+\rho u\bigr)\scalg \nabla u+\\
%        &+\nabla u_t\scalg \nabla u_t+\vep\left[\abs{\nabla^2u}^2+Ricc_g(\nabla u, \nabla u)\right]\\
        &= 2\nabla (\partial_t \varphi) \scalg \nabla u-(\nabla^2u)(\nabla u, \nabla \varphi)-(\nabla^2\varphi)(\nabla u, \nabla u)+\\
        &\quad +\vep(\nabla^2u)(\nabla u, \nabla V)+ \vep (\nabla^2V)(\nabla u, \nabla u)+2\rho\varphi+\\
        &\quad +|\nabla (\partial_t u)|^2  + \vep\left[\abs{\nabla^2u}^2+Ricc_g(\nabla u, \nabla u)\right]\,.
\end{align*} 
We note that for every vector field $v$ on $M$ we have $\nabla \varphi\scalg v=(\nabla^2u)(\nabla u, v)$, so the above equality becomes
 \begin{align*}
        \partial_{tt}\vfi   +\vep\Delta_g\varphi & = 
         2\nabla (\partial_t \vfi) \scalg \nabla u-\nabla \varphi\scalg \nabla \varphi-(\nabla^2\varphi)(\nabla u, \nabla u)+\vep \nabla \varphi\scalg \nabla V+\\
        &+\vep (\nabla^2V)(\nabla u, \nabla u)+2\rho\varphi+ |\nabla (\partial_t u)|^2  +\\
        &+\vep\left[\abs{\nabla^2u}^2+Ricc_g(\nabla u, \nabla u)\right].
        \end{align*}
Finally, if we look at the whole chain of equalities we get 
\begin{align*}
           &  -tr\left(\mathcal{A}(x,\nabla u)\circ\overline{\nabla}^2\varphi\right) + \vep \nabla \varphi\scalg \nabla V+ 2\rho\varphi= \abs{\nabla \varphi}^2
           -| \nabla (\partial_t u)|^2 
            \\ & \qquad\qquad \qquad    -\vep \left( \abs{\nabla^2u}^2+(\nabla^2V)(\nabla u, \nabla u)+ Ricc_g(\nabla u,\nabla u)\right)\,.
             \end{align*}
\end{proof}
\subsection{Global Lipschitz bound on $u$}
In this section we will prove a $W^{1,\infty}$ bound for the solution $u\in C^3(Q_T )$ of the system 
\begin{equation}
\label{elliptic:2nd_order:par}
\begin{cases}
    -tr\left(\mathcal{A}(x,\nabla u)\circ\overline{\nabla}^2u\right)+\rho u+ \vep \nabla u\scalg \nabla V(x)=0 &\text{in $Q_T $}\\
    -\partial_t u+\frac{1}{2}\abs{\nabla u}^2=\delta u+\epsilon (\log(m_1)+V(x) )&\text{in $t=T,x\in  M $}\\
    -\partial_t u+\frac{1}{2}\abs{\nabla u}^2+\delta u=\epsilon( \log(m_0)+V(x) )&\text{in $t=0,x\in  M $.}\\
\end{cases}
\end{equation}
We recall that $Q_T= (0,T)\times M$, and hereafter we denote 
\begin{gather*}
   % Q\coloneqq (0,T)\times  M \\
    \Sigma_0\coloneqq\{0\}\times  M \quad\Sigma_T\coloneqq\{T\}\times  M .
\end{gather*}
We notice that \rife{elliptic:2nd_order:par} is a perturbation of  \rife{elliptic:short}, containing an additional term $\rho u$ in the interior (this will simplify a preliminary step, detailed in Appendix) and an additional term $\de u$ on the boundary. The latter  is used to control the function $u$ in a first step. Indeed, using  the maximum principle, we have
\be\label{deltau}
 \delta\norma{u}_\infty\le\left(\norma{\epsilon(\log(m_0)+V)}_\infty+\norma{\epsilon(\log(m_1)+V)}_\infty\right).
\ee
%
%First we find a bound for the function itself by the maximum and minimum principles .
%\begin{lemma}\label{lemma:delta_u}It holds that
%\begin{equation*}
%    \delta\norma{u}_\infty\le\left(\norma{\epsilon\log(m_0)+V}_\infty+\norma{\epsilon\log(m_1)+V}_\infty\right).
%\end{equation*}
%\end{lemma}
%%\begin{proof}
% %   Apply the maximum and the minimum principle to $u$.
%%\end{proof}
Analogously, using Lemma \ref{lemma:calc} and the maximum principle, we bound the time derivative.
\begin{lemma}
\label{lemma:u_t} Let $u$ be a solution of \rife{elliptic:2nd_order:par}. 
    It holds that
    \begin{equation}
        \partial_t u(t,x)\le \sup_{\Sigma_0\cup\Sigma_T}(\partial_t u)_+\quad \text{and}\quad\abs{\partial_t u(t,x)}\le \sup_{\Sigma_0\cup\Sigma_T}\abs{\partial_t u(t,x)}\quad\forall(t,x)\in Q_T
    \end{equation}
\end{lemma}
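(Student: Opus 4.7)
The plan is to apply the weak maximum principle to the PDE satisfied by $\omega := \partial_t u$, which was computed in Lemma \ref{lemma:calc}(ii):
\begin{equation*}
-\mathrm{tr}\!\left(\mathcal{A}(x,\nabla u)\circ\overline{\nabla}^2\omega\right)+\vep \nabla \omega\scalg \nabla V+\rho\omega = -2\abs{\nabla\omega}^2 + 2(\nabla^2 u)(\nabla u,\nabla \omega).
\end{equation*}
The key observation, already recorded at the end of Section \ref{1st-to-2nd-order}, is that the bilinear form $\mathcal{A}(x,\nabla u)$ is (strictly) positive definite on $\R\times T_x M$ for every fixed $u$, so the operator $-\mathrm{tr}(\mathcal{A}(x,\nabla u)\circ\overline{\nabla}^2 \cdot)$ is elliptic in the space-time variable. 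In addition, since $\rho\ge 0$, this is exactly the setting for the classical weak maximum principle on the space-time cylinder $Q_T$ (whose boundary consists of $\Sigma_0\cup\Sigma_T$, as $M$ has no boundary).

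First I would prove the upper bound $\partial_t u\le \sup_{\Sigma_0\cup\Sigma_T}(\partial_t u)_+$. Suppose $\omega$ attains its supremum on $\overline Q_T$ at an interior point $(t_0,x_0)\in Q_T$; then $\partial_t\omega(t_0,x_0)=0$, $\nabla\omega(t_0,x_0)=0$, and $\overline{\nabla}^2\omega(t_0,x_0)$ is negative semidefinite as a symmetric endomorphism of $\R\times T_{x_0}M$. By positive definiteness of $\mathcal{A}$, this gives $-\mathrm{tr}(\mathcal{A}(x_0,\nabla u(t_0,x_0))\circ \overline{\nabla}^2\omega(t_0,x_0))\ge 0$. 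Every term containing $\nabla\omega$ vanishes, so the PDE at $(t_0,x_0)$ reduces to $\rho\,\omega(t_0,x_0)\le 0$, hence $\omega(t_0,x_0)\le 0$. Therefore in all cases
\begin{equation*}
\sup_{Q_T}\omega\le \max\Bigl(0,\ \sup_{\Sigma_0\cup\Sigma_T}\omega\Bigr)\le \sup_{\Sigma_0\cup\Sigma_T}(\omega)_+,
\end{equation*}
which is the first inequality.

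For the second inequality, I would apply the same scheme to $\tilde\omega:=-\omega$. Substituting $\omega=-\tilde\omega$ in the PDE above yields
\begin{equation*}
-\mathrm{tr}\!\left(\mathcal{A}(x,\nabla u)\circ\overline{\nabla}^2\tilde\omega\right)+\vep \nabla \tilde\omega\scalg \nabla V+\rho\tilde\omega = 2\abs{\nabla\tilde\omega}^2 - 2(\nabla^2 u)(\nabla u,\nabla \tilde\omega),
\end{equation*}
which has exactly the same structural property: at an interior maximum of $\tilde\omega$ the right-hand side vanishes (both terms contain $\nabla\tilde\omega$), so the same argument gives $\tilde\omega\le 0$ at any interior maximum, and hence $\sup_{Q_T}(-\omega)\le \sup_{\Sigma_0\cup\Sigma_T}(-\omega)_+\le \sup_{\Sigma_0\cup\Sigma_T}|\omega|$. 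Combined with the upper bound on $\omega$, this yields $|\omega|\le \sup_{\Sigma_0\cup\Sigma_T}|\omega|$ throughout $Q_T$.

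I do not expect a serious obstacle: the ellipticity of $\mathcal{A}$ is already in place, and the crucial miracle is that the quadratic nonlinearities on the right-hand side of the $\omega$-equation, as well as the first-order drift coming from $V$, are \emph{all} proportional to $\nabla\omega$ and therefore drop out at interior extrema. The only point that deserves a line is that $\overline{\nabla}^2\omega\le 0$ as an endomorphism at an interior maximum: this follows by testing against arbitrary directions $[\bar w,w]\in\R\times T_{x_0}M$ and using that $(t_0,x_0)$ is a critical point of $\omega$, so that the definition of $\overline{\nabla}^2\omega$ given in Section \ref{1st-to-2nd-order} reduces there to the ordinary space-time Hessian in a local chart, which is indeed negative semidefinite.
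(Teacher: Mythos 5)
Your proof is correct and is exactly the argument the paper leaves implicit (the paper simply remarks that the bound follows ``using Lemma \ref{lemma:calc} and the maximum principle''): you spell out the weak maximum principle for the equation satisfied by $\omega=\partial_t u$, observing that the drift term $\vep\,\nabla\omega\scalg\nabla V$ and the quadratic right-hand side all carry a factor of $\nabla\omega$ and hence drop at an interior critical point, so that the PDE reduces to $-\mathrm{tr}(\mathcal{A}\circ\overline{\nabla}^2\omega)+\rho\,\omega=0$ there, and negativity of the covariant space-time Hessian at the max forces $\rho\,\omega\le 0$. The only slip is a sign in the flipped equation for $\tilde\omega=-\omega$ (the correct right-hand side is $2\abs{\nabla\tilde\omega}^2+2(\nabla^2 u)(\nabla u,\nabla\tilde\omega)$, not $2\abs{\nabla\tilde\omega}^2-2(\nabla^2 u)(\nabla u,\nabla\tilde\omega)$), but this is immaterial since you only use that both terms vanish when $\nabla\tilde\omega=0$.
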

%\begin{proof}
%    By lemma \ref{lemma:calc}, the function $\omega\coloneqq u_t$ solves
%    \begin{align*}
%            -tr\left(\mathcal{A}(x,\nabla u)\circ\overline{\nabla}^2\omega\right)=&-2\abs{\nabla \omega}^2+2\nabla u\scalg \nabla^2u\scalg \nabla \omega-\nabla \omega\scalg \nabla V-\rho\omega.
%        \end{align*}
%        in $Q$. So $\omega$ does not have a strictly positive maximum inside.        
%\end{proof} 
Finally we give a bound for the space derivative in terms of the $\sup$-norm of $u$.
\begin{theorem}
\label{thm:Lip_u_par} Let $u$ be a solution of \rife{elliptic:2nd_order:par}. 
    There exists a constant $C$, independent from $\rho$ and $\delta$, such that
    \begin{equation}\label{stimaDu}
        \norma{ {\nabla}u }_\infty\le C(1+\norma{u}_\infty)\qquad;\qquad   \norma{ {\partial_t}u }_\infty\le C(1+\norma{u}_\infty^2).
    \end{equation}
    Such constant $C$ depends on $ \norma{\epsilon\log(m_0)}_{W^{1,\infty}},\norma{\epsilon\log(m_1)}_{W^{1,\infty}}$ and on $\norma{\vep V}_{W^{2,\infty}}$.
\end{theorem}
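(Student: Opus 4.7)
The plan is to run the Bernstein method on the quasilinear equation \rife{elliptic:2nd_order:par}, using the differential identities from Lemma \ref{lemma:calc}. The bound on $\partial_t u$ will follow from the Lipschitz bound on $u$: Lemma \ref{lemma:u_t} gives $\|\partial_t u\|_\infty \leq \sup_{\Sigma_0\cup\Sigma_T}|\partial_t u|$, and the boundary conditions of \rife{elliptic:2nd_order:par} yield $|\partial_t u|_{\Sigma_i} \leq \tfrac12|\nabla u|^2 + \delta|u| + \vep(|\log m_i| + |V|)$; combined with $\delta\|u\|_\infty\leq C$ from \rife{deltau}, it is enough to prove $\|\nabla u\|_\infty \leq C(1+\|u\|_\infty)$, and then $\|\partial_t u\|_\infty \leq C(1+\|u\|_\infty^2)$ follows.

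For the Lipschitz estimate I would set $\varphi=\tfrac12|\nabla u|^2$, fix $K>\|u\|_\infty$, and consider the auxiliary function $\psi=\varphi+\tfrac\alpha2(u+K)^2$ with $\alpha>0$ to be tuned. Combining parts (i) and (iii) of Lemma \ref{lemma:calc} (using the explicit form $\mathcal{A}(x,\nabla u)([\partial_t u,\nabla u],[\partial_t u,\nabla u])=(|\nabla u|^2-\partial_t u)^2+\vep|\nabla u|^2$), one gets the equation satisfied by $\psi$ under the degenerate elliptic operator $L[\cdot]=-tr\bigl(\mathcal{A}(x,\nabla u)\circ\overline{\nabla}^2(\cdot)\bigr)+\vep\nabla(\cdot)\scalg\nabla V$, and one examines where $\psi$ attains its maximum on $\overline Q_T$. \emph{Interior max:} at an interior max $(t_0,x_0)$, $L[\psi]\geq 0$ by degenerate ellipticity; $\nabla\psi=0$ yields $\nabla\varphi=-\alpha(u+K)\nabla u$, hence $|\nabla\varphi|^2=2\alpha^2(u+K)^2\varphi$, and the pointwise inequality $|\nabla\varphi|^2\leq 2\varphi|\nabla^2u|^2$ then forces $|\nabla^2u|^2\geq\alpha^2(u+K)^2$ at $(t_0,x_0)$. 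Feeding these into the PDE for $\psi$, using the Ricci lower bound \rife{ricci} and $\|V\|_{W^{2,\infty}}<\infty$ to dominate the Bochner-type term, and isolating $\alpha(2\varphi-\partial_t u)^2\geq 2\alpha\varphi^2-\alpha(\partial_t u)^2$ from the quadratic form $\alpha\mathcal{A}$, one arrives at an inequality of the form $2\alpha\varphi[\varphi-\alpha(u+K)^2]\leq C(1+\|u\|_\infty^2)$, which (for $\alpha$ large enough depending on $\vep,\lambda,\|V\|_{W^{2,\infty}}$) yields $\varphi\leq C(1+\|u\|_\infty^2)$ at the interior max.

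\emph{Boundary max:} if the max lies on $\Sigma_T$ (the case $\Sigma_0$ is symmetric), then $\partial_t\psi(T,x_0)\geq 0$; using the tangential relation $\nabla^{(x)}\varphi=-\alpha(u+K)\nabla u$ and the boundary equation $\partial_t u=\varphi-\delta u-\vep(\log m_1+V)$ one expresses $\partial_t\psi$ explicitly, and the inequality $\partial_t\psi\geq 0$ reduces to $\alpha(u+K)\varphi+2\delta\varphi\leq\vep|\nabla u|\,\|\nabla(\log m_1+V)\|_\infty+\alpha(u+K)C$, with $C$ depending on $\|\vep\log m_1\|_\infty$, $\|\vep V\|_\infty$ and on $\delta\|u\|_\infty\leq C$. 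Using $\|\vep\log m_1\|_{W^{1,\infty}}<\infty$ and $\|\vep V\|_{W^{2,\infty}}<\infty$ to absorb the term $\vep|\nabla u|=\sqrt{2\varphi}\cdot O(1)$ via Young, this gives $\varphi\leq C(1+\|u\|_\infty)$ at the boundary max. Merging with the interior case yields $\|\varphi\|_\infty\leq C(1+\|u\|_\infty^2)$, i.e. the first inequality in \rife{stimaDu}; the second inequality is then immediate from the reduction in the first paragraph.

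The delicate step is the interior Bernstein estimate: the parameter $\alpha$ must be tuned so that the enhanced dissipation $\vep|\nabla^2u|^2\geq\vep\alpha^2(u+K)^2$ and the quadratic $\alpha\mathcal{A}$-form really do dominate the bad term $|\nabla\varphi|^2=2\alpha^2(u+K)^2\varphi$ uniformly in $\rho,\delta$. The Riemannian ingredient enters exactly through the Bochner term in Lemma \ref{lemma:calc}(iii), which is controlled by the Ricci lower bound \rife{ricci}.
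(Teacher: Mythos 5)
Your overall plan (Bernstein method via Lemma \ref{lemma:calc}, maximum principle on an auxiliary function combining $|\nabla u|^2$ with a quadratic in $u$, reduce $\partial_t u$ to $\nabla u$ via Lemma \ref{lemma:u_t}) is the right one and matches the paper in spirit. The boundary-max computation is essentially correct. But the interior-max step, as you describe it, does not close, and it is precisely the step you flag as ``delicate.'' Two problems. First, the enhanced dissipation $\vep|\nabla^2 u|^2\geq\vep\alpha^2(u+K)^2$ (coming from $\nabla\psi=0$) is a \emph{constant-order} quantity, while the bad term $|\nabla\varphi|^2=2\alpha^2(u+K)^2\varphi$ grows linearly in $\varphi$; making $\alpha$ large only inflates both by the same factor and never dominates the bad term for large $\varphi$. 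Second, the Young step $\alpha(2\varphi-\partial_t u)^2\geq 2\alpha\varphi^2-\alpha(\partial_t u)^2$ leaves you holding $\alpha(\partial_t u)^2$, and at the interior max point $\partial_t u$ is \emph{not} a priori controlled: from Lemma \ref{lemma:u_t} and the boundary conditions one only gets $|\partial_t u|\leq\|\varphi\|_\infty+C$, and $\|\varphi\|_\infty$ is the quantity you are trying to bound, so the estimate is circular unless you additionally observe that $\|\varphi\|_\infty\leq\varphi\big|_{\max\psi}+2\alpha K^2$ (because $\max\varphi\leq\max\psi$), which is not stated and is in fact the heart of the argument.

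The paper handles both issues at once by a different mechanism. It time-shifts $u$ to $v=u+K-C_0\tfrac{T-t}{T}$ (so $v(0)\leq-1$, $v(T)\geq 1$), sets $z=\tfrac12|\nabla v|^2+\tfrac{\gamma}{2}v^2$ with $\gamma=\sigma/(1+\|u\|_\infty)^2$ chosen \emph{small} rather than large, and at the interior max of $z$ establishes the pointwise lower bound $-\partial_t v+|\nabla v|^2\geq\tfrac12|\nabla v|^2-K$. This last inequality is the crucial step: it follows from Lemma \ref{lemma:u_t} combined with the boundary values, which yield $\partial_t u-\max z\leq\overline K$; once obtained, the good term $\gamma\,|{-}\partial_t v+|\nabla v|^2|^2$ is genuinely \emph{quartic} in $|\nabla v|$, whereas the bad term $|\nabla\varphi|^2=\gamma^2 v^2|\nabla v|^2=O(\sigma\gamma|\nabla v|^2)$ is only quadratic (and shrunk by $\gamma$ small), so the quartic wins and gives $|\nabla v|^2\leq C(1+1/\gamma)=C(1+\|u\|_\infty^2)$. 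So the bound comes from the smallness of $\gamma$, not (as in your sketch) from largeness of $\alpha$ or from the $\vep|\nabla^2 u|^2$ term, and the control of $\partial_t u$ at the max is a genuine input (the relation $\partial_t u\leq\max z+\overline K$) rather than a consequence of the conclusion. If you want to keep your auxiliary function $\varphi+\tfrac\alpha2(u+K)^2$, you must still supply this $\partial_t u$-versus-$\max\psi$ comparison; without it, the interior argument has a gap.
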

\begin{proof}
We follow P.-L. Lions' method, as developed in \cite{Mu}, \cite{Po}. First of all, we replace $u$ with the auxiliary function
%\textbf{Step 1} (\emph{tuning the u}). Let  $v: M \times[0,T]\to\R$ be the function
     \begin{equation*}
         v\coloneqq u+K-C_0\left(\tfrac{T-t}{T}\right)\,,\quad\text{where }K=2\norma{u}_\infty+1,\quad C_0=2K\,.
\end{equation*}
Notice that $\nabla v= \nabla u$, so $v$ solves the same elliptic equation of $u$, up to an additional term due to the time translation. Moreover, we have  $\norma{v}_\infty\le  C (1+\norma{u}_\infty)$. 
%
%
%then it solves
%\begin{equation}
%\label{elliptic:2nd_order:v}
%\begin{cases}
%    -tr\left(\mathcal{A}(x,\nabla u)\circ\overline{\nabla}^2v\right)+\rho v+\nabla v\scalg \nabla V(x)=\rho(K-C_0\tfrac{T-t}{T}) &\text{in $(0,T)\times  M $}\\
%    -v_t+\frac{1}{2}\abs{\nabla v}^2=\delta u+\epsilon\log(m_1)+V(x)-\tfrac{C_0}{T} &\text{in $t=T,x\in  M $}\\
%    -v_t+\frac{1}{2}\abs{\nabla v}^2+\delta u= \epsilon\log(m_0)+V(x)-\tfrac{C_0}{T} &\text{in $t=0,x\in  M $}.
%\end{cases}
%\end{equation}
%We note that 
%\begin{align*}
%    \norma{v}_\infty\le& C (1+\norma{u}_\infty)\\
%    \intertext{and for every $x\in  M $}
%    v(T,x)\ge& 1+\norma{u}_\infty \ge 1\\
%    v(0,x)\le&-(1+\norma{u}_\infty)\le -1.
%\end{align*}
We set
\begin{equation*}
    z\coloneqq \tfrac{1}{2}\abs{\nabla v}^2+\tfrac{\gamma}{2}v^2
\end{equation*}
where
\begin{equation*}
    \gamma\coloneqq\frac{\sigma}{(1+\norma{u}_\infty)^2}
\end{equation*}
for some small constant $\sigma$ to be chosen later. The goal is to obtain an upper bound on $z$ by means of the maximum principle.
If the maximum occurs at the boundary, this means that either $t=0$ or $t=T$; here one uses that, by construction,  $v(T)\geq 1$, $v(0)\leq -1$, then  reasoning exactly as in \cite{Po} (Thm 3.4, Step 1) one obtains that  
\begin{equation*}
    \norma{\nabla v}_\infty\le C (1+\|u\|_\infty)  
    \end{equation*}
for some $C= C(\norma{ (\vep\log(m_1)}_{W^{1,\infty}}, \norma{(\vep\log(m_0)}_{W^{1,\infty}}, \|\vep \nabla V\|_\infty)$, in case of a maximum attained at the extremal times $t=0,T$.

Let us focus on a possibly interior maximum point. 
%\textbf{Step 2} (\emph{the behaviour at time extrema}) Suppose that $z$ has a maximum for $t=T$. In that point we have $\nabla z=0$ and $z_t\ge0$, so
%\begin{align*}
%    0\le z_t=&\nabla v\scalg \nabla v_t+\gamma vv_t=\\
%    \intertext{we use \eqref{elliptic:2nd_order:v} for $t=T$ to compute $\nabla v_t$}
%    =&\nabla v\scalg \left(\nabla^2v\scalg \nabla v- \nabla v-\nabla (\vep\log(m_1))- \nabla V\right)+\\
%    &+\lambda v\left(\tfrac{1}{2}\abs{\nabla v}^2-\delta u-\vep\log(m_1)+ V-\tfrac{C_0}{T}\right)
%    \intertext{now we use that $\nabla z=\nabla^2v\scalg \nabla v+\lambda v \nabla v$, finding}
%    =&\nabla v\scalg \nabla z-\tfrac{\lambda}{2}v\abs{\nabla v}^2-\abs{\nabla v}^2-\nabla v\scalg \nabla (\vep\log(m_1))-\nabla v\scalg \nabla V+\\
%    &+\lambda v(-\delta u-\vep\log(m_1)+V -\tfrac{C_0}{T}).
%\end{align*}
%Since $v(T,x)\ge 1+\norma{u}_\infty $ for every $x\in  M $, $\norma{v}_\infty\le C (1+\norma{u}_\infty)$ and the definition of $\lambda$, we get that in a maximum point of $z$ it holds
%\small\begin{equation*}
%    (1+\sigma)\abs{\nabla v}^2\le \abs{\nabla v}(\norma{\nabla (\vep\log(m_1))}_\infty+\norma{\nabla V}_\infty)+\sigma(\delta\norma{u}_\infty+\norma{\vep\log(m_1)}_\infty+\norma{V}_\infty+\tfrac{C_0}{T})
%\end{equation*}
%So, by lemma \ref{lemma:delta_u}, we get
%\begin{equation*}
%    \norma{\nabla v}_\infty\le C(\norma{ (\vep\log(m_1))}_{W^{1,\infty}}, \norma{(\vep\log(m_0))}_{W^{1,\infty}}, \norma{\nabla V}_{W^{1,\infty}},)
%\end{equation*}\normalsize
%We reason analogously for the case $t=0$.
%
%\textbf{Step 3} (\emph{the interior estimate})
From  Lemma \ref{lemma:calc}, part (i) (applied to $v$) and part (iii), we have
%if we set $h\coloneqq v^2$, we get
%\begin{align*}
%            -tr\left(\mathcal{A}(x,\nabla u)\circ\overline{\nabla}^2h\right)=&-2(\nabla V\scalg \nabla v+\rho u)v\\
%            &-2\mathcal{A}(x,\nabla u)[(v_t,\nabla v),(v_t,\nabla v)].
%        \end{align*}
%Again from Lemma \ref{lemma:calc}, but part (ii), if we set $\varphi\coloneqq \tfrac{1}{2}\abs{\nabla v}^2$, we get 
%\begin{align*}
%            -tr\left(\mathcal{A}(x,\nabla u)\circ\overline{\nabla}^2\varphi\right)=&\abs{\nabla \varphi}^2-\nabla \varphi\scalg \nabla V-\left(\abs{\nabla^2u}^2+Ricc_g(\nabla u,\nabla u)\right)\vep\\
%            &-\nabla u_t\scalg \nabla u_t-\nabla u\scalg \nabla^2V\scalg \nabla u-2\rho\varphi.
%\end{align*}
%Hence, using $z=\varphi+\tfrac{\lambda}{2}h$, we have
\be\label{mezze}
\begin{split}
          &   -tr\left(\mathcal{A}(x,\nabla u)\circ\overline{\nabla}^2z\right) + \vep \nabla z\scalg \nabla V+ 2\rho\, z=  
            -\gamma\mathcal{A}(x,\nabla u)([\partial_t v,\nabla v],[\partial_t v,\nabla v])]+\\
            & \quad 
+ \abs{\nabla \varphi}^2- |\nabla (\partial_t v)|^2 -  \vep \left(\abs{\nabla^2u}^2+Ricc_g(\nabla u,\nabla u)+  (\nabla^2V)(\nabla u, \nabla u)\right) \\
            & \quad  +\gamma\rho(K-C_0 \tfrac{T-t}{T})v
%             \\
%            \intertext{We note that $-2\rho z=-2\rho\varphi-\lambda \rho u v -\lambda\rho(K-C_0 \tfrac{T-t}{T})v$, so we can substitute the last two terms}
%            =&-\lambda\mathcal{A}(x,\nabla u)[(v_t,\nabla v),(v_t,\nabla v)]+\\
%            &-\nabla z\scalg \nabla V+\abs{\nabla \varphi}^2-\left(\abs{\nabla^2u}^2+Ricc_g(\nabla u,\nabla u)\right)\vep\\
%            &-\nabla v_t\scalg \nabla v_t-\nabla u\scalg \nabla^2V\scalg \nabla u+\lambda\rho(K-C_0 \tfrac{T-t}{T})v 
\end{split}
\ee  
where $\vfi= \frac12 |\nabla u|^2$.  By definition  of the  matrix $\mathcal{A}(x,\nabla u)$, we get
$$
    \mathcal{A}(x,\nabla u)([\partial_t v,\nabla v],[\partial_t v,\nabla v])= \abs{-\partial_t v +\abs{\nabla v}^2}^2+\vep\abs{\nabla v}^2\\
 %   =&\abs{-v_t+2\varphi}^2+\vep\abs{\nabla v}^2
$$
while the definition of $\vfi$ implies
$$
|\nabla \vfi|^2= \abs{\nabla z}^2-2\gamma v \nabla v\scalg \nabla z+\gamma^2v^2\abs{\nabla v}^2.
$$
Inserting the above equalities in \rife{mezze} we get
%
%
%from which we obtain
%\begin{equation}
%\label{thm:Lip_est:ineq1}    
%\begin{aligned}
%            -tr\left(\mathcal{A}(x,\nabla u)\circ\overline{\nabla}^2z\right)+\nabla z\scalg \nabla V+2\rho z+&\\
%            +\nabla u\scalg \nabla^2V\scalg \nabla u+&\\
%            +\gamma\abs{-v_t+2\varphi}^2+\gamma\vep\abs{\nabla v}^2+&\\
%            +\nabla v_t\scalg \nabla v_t-\abs{\nabla \varphi}^2+&\\
%            +\left(\abs{\nabla^2u}^2+Ricc_g(\nabla u,\nabla u)\right)\vep&\le\,\gamma\rho (K-C_0 \tfrac{T-t}{T})\abs{v}.
%\end{aligned}\end{equation}
%We study $\abs{\nabla \varphi}^2$, which is
%\begin{align*}
%    \nabla \varphi \scalg \nabla \varphi=&[\nabla z-\frac{\gamma}{2}\nabla h]\scalg[\nabla z-\frac{\gamma}{2}\nabla h]\\
%                            =&\abs{\nabla z}^2-\gamma \nabla h\scalg \nabla z+\frac{\gamma^2}{4}\abs{\nabla h}^2\\
%                            =&\abs{\nabla z}^2-2\gamma v \nabla v\scalg \nabla z+\gamma^2v^2\abs{\nabla v}^2.
%\end{align*}
%We substitute it in \eqref{thm:Lip_est:ineq1}, 
\begin{align*}
          &   -tr\left(\mathcal{A}(x,\nabla u)\circ\overline{\nabla}^2z\right)+\vep \nabla z\scalg \nabla V+2\rho z+\gamma\abs{-\partial_tv+\abs{\nabla v}^2}^2+\gamma\vep\abs{\nabla v}^2 \\
           & \quad  =   \abs{\nabla z}^2-2\gamma v \nabla v\scalg \nabla z+ \gamma^2v^2\abs{\nabla v}^2    - \abs{\nabla (\partial_t v)}^2
           \\ & \qquad - \vep 
             \left(\abs{\nabla^2u}^2+Ricc_g(\nabla u,\nabla u) +   (\nabla^2V)(\nabla u, \nabla u)\right)  +\gamma\rho(K-C_0 \tfrac{T-t}{T})v
             %&\le\,\gamma\rho (K-C_0 \tfrac{T-t}{T})\abs{v}.
\end{align*}
and since $Ric_g$ is bounded below,  and $\gamma v^2\leq C\sigma$ by  the initial choice, we can estimate the right-hand side obtaining
\begin{equation}
\label{thm:Lip_est:ineq2}    
 \begin{split}
&   -tr\left(\mathcal{A}(x,\nabla u)\circ\overline{\nabla}^2z\right)+\nabla z\scalg \nabla V+2\rho z+\gamma\abs{-\partial_t v +\abs{\nabla v}^2}^2+\gamma\vep\abs{\nabla v}^2 \\
           & \quad  \leq   \abs{\nabla z}^2-2\gamma v \nabla v\scalg \nabla z +  C ( 1+ |\nabla v|^2) 
%           \gamma^2v^2\abs{\nabla v}^2    -  \nabla u\scalg \nabla^2V\scalg \nabla u- \abs{\nabla v_t}^2
%           \\ & \qquad - \vep 
%             \left(\abs{\nabla^2u}^2+Ricc_g(\nabla u,\nabla u)\right)  +\gamma\rho(K-C_0 \tfrac{T-t}{T})v
\end{split}
\ee
for some constant $C$ depending on $(Ric_g+ \nabla^2 V)_- $ and on $\sigma$.
Let us focus on the quantity $-\partial_t v +|\nabla v|^2$: at an interior maximum point of $z$, we have
\begin{align*}
-\partial_t v+ |\nabla v|^2 & = (\max_{\overline{Q}}z-\partial_t u)-\frac{C_0}{T}-\frac{\gamma}{2}v^2+ \frac12  |\nabla v|^2
%\\
%& \geq  (\max_{\overline{Q}}z-u_t)-\frac{C_0}{T}-\frac{\gamma}{2}v^2+ \frac12  |\nabla v|^2
\end{align*}
%\begin{align*}
%    -v_t+2\varphi&=-v_t+2\varphi+\frac{\gamma}{2}v^2-\frac{\gamma}{2}v^2\\
%                &=-v_t+z+\varphi-\frac{\gamma}{2}v^2\\
%                &=z-u_t-\frac{C_0}{T}-\frac{\gamma}{2}v^2+\varphi\\
%                \intertext{on an interior maximum point of $z$ it holds that}
%                &=(\max_{\overline{Q}}z-u_t)-\frac{C_0}{T}-\frac{\gamma}{2}v^2+\varphi.
%\end{align*}
However, thanks to Lemma \ref{lemma:u_t},  and to the boundary conditions at $t=0,T$, we have
\begin{align*}
    \partial_t u & \le\sup_{\Sigma_0\cup\Sigma_t} \partial_t u 
 \\ & \leq  \max_{\overline{Q}}z + \|\de u\|_\infty + \frac\gamma2 \|v\|_\infty^2+ \|\vep V\|_\infty + \vep \max \left( \|\log(m_0)\|_\infty, \|\log(m_1)\|_\infty\right)  
    \end{align*}
which implies, using $\gamma v^2\le C\sigma$ and \rife{deltau},  
    $$
    \partial_t u - \max_{\overline{Q}}z \leq \overline{K}
$$
for a certain $\overline{K}>0$ depending only on $\norma{\vep V}_\infty, \norma{\epsilon\log(m_1)}_\infty$ and $\norma{\epsilon\log(m_0)}_\infty$.  Therefore, we conclude that
\begin{align*}
-\partial_t v+ |\nabla v|^2 & \geq  - \overline{K} -\frac{C_0}{T}-\frac{\gamma}{2}v^2+ \frac12  |\nabla v|^2
\\ & \geq - K + \frac12  |\nabla v|^2
\end{align*}
where $K=  \overline{K} + \frac{C_0}{T}+C \sigma$.  So, either $|\nabla v|^2\leq 4K$ (and then $\max_{\overline{Q}}z\leq 2K+ C\frac\sigma 2$) or we have $- K + \frac12  |\nabla v|^2\geq \frac14 |\nabla v|^2$, and we estimate $|-\partial_t v+ |\nabla v|^2|^2\geq \frac1{16} |\nabla v|^4$. In this latter case, looking at \rife{thm:Lip_est:ineq2}   on a maximum point of $z$, where $\nabla z=0$, we deduce that
$$
 \frac \gamma{16}  |\nabla v|^4 \leq    C ( 1+ |\nabla v|^2) 
$$
which implies $|\nabla v|^2\leq C (1+ \frac1\gamma )\leq C (1+ \|u\|_\infty^2)$. Thus, we conclude in both cases with an estimate like \rife{stimaDu}, for the spatial gradient $\nabla u$. Due to Lemma \ref{lemma:u_t}, this also yields an estimate for $\partial_t u$, and then the full gradient $\overline \nabla u$ is estimated.
\end{proof}

\subsection{Local bound on the density}

We next derive a local (in time) version of the gradient estimate. This is not enough to give a local Lipschitz bound for $u$, but provides with a local $L^\infty$-bound for the density $m$.    

\begin{proposition}\label{prop:local_bound_m}
Let $u$ be a solution of \rife{elliptic:2nd_order:par}. Let  $0<a<b<T$ and $\kappa\in (0,\frac12)$.  There exists a constant $K>0$ (independent of $\rho, \de$) such that
\be\label{bound:local_m}
\vep\, (\log(m(t)) + V)+ \kappa |\nabla u|^2 \leq K\left(\frac{1}{(t-a)^2}+\frac{1}{(b-t)^2}\right) \quad\forall t\in(a,b)
\ee
where $K$ depends only on  $\kappa, T$, $||u||_{L^\infty((a,b)\times M )}$ and $\|(Ricc_g+ \nabla^2 V)_-\|_\infty$.   
\end{proposition}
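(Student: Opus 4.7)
The proof is a localized Bernstein estimate, adapting the maximum-principle argument of Theorem \ref{thm:Lip_u_par} to the interior setting via a time-cutoff. Using the Hamilton--Jacobi equation in \eqref{elliptic:2nd_order:par}, the target quantity can be rewritten as
$$\Phi(t,x) := \vep(\log m + V) + \kappa|\nabla u|^2 = -\partial_t u + \theta\,|\nabla u|^2 - \rho u\,, \qquad \theta := \tfrac12+\kappa\in(\tfrac12,1)\,.$$
Combining $2\theta$ times Lemma \ref{lemma:calc}(iii) with $-1$ times Lemma \ref{lemma:calc}(ii), and adding the $-\rho u$ correction,  gives the quasilinear elliptic equation for $\Phi$: setting $\omega:=\partial_t u$ and $\vfi:=\tfrac12|\nabla u|^2$, one obtains
\begin{align*}
-tr(\A\circ\overline\nabla^2\Phi) + \vep\nabla\Phi\scalg\nabla V &= 2(1-\theta)|\nabla\omega|^2 - 2\nabla\vfi\scalg\nabla\omega + 2\theta|\nabla\vfi|^2 \\
&\quad - 2\theta\vep\bigl(|\nabla^2 u|^2 + (\nabla^2 V)(\nabla u,\nabla u) + Ricc_g(\nabla u,\nabla u)\bigr) + O(\rho|u|).
\end{align*}

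Next, introduce a smooth time-localizer $\zeta(t)=[(t-a)(b-t)]^2$ and consider the auxiliary function $w(t,x) := \zeta(t)\Phi(t,x)$. By construction $w\equiv 0$ on $\{t=a\}\cup\{t=b\}$, so any positive maximum of $w$ is attained at an interior point $(\bar t,\bar x)\in(a,b)\times M$. At such a point one has: (a) $\nabla w = 0$, which forces $\nabla\Phi=0$ and hence the crucial first-order identity $\nabla\omega = 2\theta\nabla\vfi$; (b) $\partial_t w=0$, yielding $\partial_t\Phi = -(\zeta'/\zeta)\Phi$; (c) by ellipticity of $\A$ and $\overline\nabla^2 w\le 0$, one has $-tr(\A\circ\overline\nabla^2 w)\ge 0$. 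Expanding by the product rule and inserting (a)--(b) one then gets the max-principle inequality
$$\zeta\,\bigl[-tr(\A\circ\overline\nabla^2\Phi)\bigr] + \bigl[\,2(\zeta')^2/\zeta - \zeta''\,\bigr]\Phi \ \ge\ 0\,.$$

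Plugging the constraint $\nabla\omega=2\theta\nabla\vfi$ into the quadratic form of the first display collapses it to the critical value
$$2(1-\theta)(2\theta)^2|\nabla\vfi|^2 - 4\theta|\nabla\vfi|^2 + 2\theta|\nabla\vfi|^2 = -2\theta(2\theta-1)^2|\nabla\vfi|^2 = -8\theta\kappa^2|\nabla\vfi|^2\,.$$
The pointwise Cauchy--Schwarz bound $|\nabla\vfi|^2 = |(\nabla^2 u)\nabla u|^2 \le 2\vfi\,|\nabla^2 u|^2$ then allows this negative contribution to be absorbed into the elliptic dissipation $-2\theta\vep|\nabla^2 u|^2$, at the price of a correction proportional to $\vfi|\nabla^2 u|^2$. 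Combined with the asymptotic $2(\zeta')^2/\zeta - \zeta'' \sim C\zeta(t)\bigl[(t-a)^{-2}+(b-t)^{-2}\bigr]$ valid near the endpoints, and with the lower bound on $Ricc_g+\nabla^2 V$, the resulting algebraic inequality yields $\zeta(\bar t)\Phi(\bar t,\bar x)\le K$, which is the desired bound after expressing $\Phi(t,x) \le \zeta(\bar t)\Phi(\bar t,\bar x)/\zeta(t)$. The $\rho u$ terms and the curvature contribution $O(\vep|\nabla u|^2)$ are absorbed using the local $L^\infty$ bound on $u$ (together with $|\nabla u|^2 = 2\vfi \le (\Phi+\omega)/\theta$), producing a constant $K$ depending only on $\kappa, T, \|u\|_{L^\infty((a,b)\times M)}$ and $\|(Ricc_g+\nabla^2V)_-\|_\infty$.

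\textbf{Main obstacle.} The delicate point is the algebraic closure at the maximum: because $\theta>1/2$, the quadratic form in $(\nabla\omega,\nabla\vfi)$ restricted to the max-constraint $\nabla\omega=2\theta\nabla\vfi$ is \emph{negative}, namely $-8\theta\kappa^2|\nabla\vfi|^2$, rather than dissipative. One must balance this bad term against the genuine dissipation $-2\theta\vep|\nabla^2 u|^2$; the hypothesis $\kappa<1/2$ (equivalently $(2\theta-1)^2=4\kappa^2<1$) is exactly what keeps the bad coefficient strictly below threshold and makes the absorption via $|\nabla\vfi|^2 \le 2\vfi|\nabla^2u|^2$ viable, with the factor $\vfi$ then reabsorbed into $\Phi$ itself through the identity $2\theta\vfi = \Phi + \omega - \rho u$.
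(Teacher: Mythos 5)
Your strategy (interior Bernstein estimate via a time-cutoff) is structurally different from the paper's, which compares $z:=\theta|\nabla u|^2-\partial_t u+\tfrac\gamma2 u^2$ against a blow-up barrier $\psi=L[(t-a)^{-2}+(b-t)^{-2}]$. Both are legitimate ways to localize a maximum-principle argument, but your version has a genuine gap that the paper's avoids by adding the quadratic term $\tfrac\gamma2 u^2$.

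The decisive ingredient in the paper's proof is the contribution from $\tfrac\gamma2 u^2$, which via Lemma~\ref{lemma:calc}(i) produces the source $-\gamma\,\mathcal A(x,\nabla u)(\overline\nabla u,\overline\nabla u)=-\gamma\bigl[(|\nabla u|^2-\partial_t u)^2+\vep|\nabla u|^2\bigr]$ on the right side of the equation for $z$. At a maximum point of $z-\psi$ (where $\nabla z=0$ and $z\ge\psi+\tfrac12$), the quantity $|\nabla u|^2-\partial_t u$ is \emph{provably} nonnegative and large, so $-\gamma(|\nabla u|^2-\partial_t u)^2\le-\gamma(1-\theta)^2|\nabla u|^4-\gamma\psi^2$. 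The quartic $-\gamma(1-\theta)^2|\nabla u|^4$ dominates the Ricci correction $\hat C|\nabla u|^2$, and the $-\gamma\psi^2\sim-\gamma L^2$ term dominates $-tr(\mathcal A\circ\overline\nabla^2\psi)\sim 6L$; the quadratic-vs-linear scaling in $L$ forces $\max(z-\psi)\le\tfrac12$. Your $\Phi=-\partial_t u+\theta|\nabla u|^2(-\rho u)$ carries no such $u^2$-generated term, so nothing quartic in $|\nabla u|$ appears in $-tr(\mathcal A\circ\overline\nabla^2\Phi)$.

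Without it, your algebra does not close. After inserting the collapsed quadratic form $-8\theta\kappa^2|\nabla\varphi|^2$ and the Bochner dissipation $-2\theta\vep|\nabla^2u|^2$, both terms contribute \emph{negatively} to $-tr(\mathcal A\circ\overline\nabla^2\Phi)$, while the Ricci lower bound contributes a positive $2\theta\vep\,\mu\,|\nabla u|^2$. Since $2(\zeta')^2/\zeta-\zeta''=6(a+b-2t)^2+4(t-a)(b-t)>0$ on $(a,b)$, the inequality $\zeta[-tr(\mathcal A\circ\overline\nabla^2\Phi)]+[2(\zeta')^2/\zeta-\zeta'']\Phi\ge 0$ at the max reads $[2(\zeta')^2/\zeta-\zeta'']\Phi\ge\zeta\bigl[8\theta\kappa^2|\nabla\varphi|^2+2\theta\vep|\nabla^2u|^2-2\theta\vep\mu|\nabla u|^2+O(\rho)\bigr]$, which is a \emph{lower} bound for $\Phi$: the right-hand side can be made arbitrarily negative by choosing $|\nabla u|$ large and $|\nabla^2u|$ small, so the inequality is consistent with any value of $\Phi$ and yields no upper bound. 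The proposed "absorption" of $-8\theta\kappa^2|\nabla\varphi|^2\ge-16\theta\kappa^2\varphi|\nabla^2u|^2$ into $-2\theta\vep|\nabla^2u|^2$ goes the wrong way (it makes the term more negative, not controlled), and reabsorbing the factor $\varphi$ through $2\theta\varphi=\Phi+\omega-\rho u$ is circular: $\omega=\partial_t u$ is exactly one of the quantities you are trying to bound on $(a,b)$ and has no a~priori control there. To rescue the cutoff version you would need to add $\tfrac\gamma2 u^2$ (with $\gamma\sim(1+\|u\|_\infty)^{-2}$) to $\Phi$ and exploit the resulting $-\gamma(|\nabla u|^2-\partial_t u)^2$ source, essentially reproducing the paper's mechanism in cutoff form.
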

\begin{proof} We follow an idea introduced in \cite{Po}, where a similar result was proved in the Euclidean case.

Let us consider $z:[a,b]\times M \to \R$ defined by
$$
z\coloneqq  \theta |\nabla u|^2-\partial_t u+\gamma\frac{u^2}{2}, \quad \hbox{where $\theta\in (0,1)$, $\gamma=\frac{1}{(1+||u||_{L^\infty((a,b)\times M )})^2}$}.
$$    
By using Lemma \ref{lemma:calc} and denoting $\varphi=\frac{|\nabla u|^2}{2}$, we obtain
\be\label{hard-loc-cafe}\begin{split}
- & tr\left(\mathcal{A}(x,\nabla u)\overline{\nabla}^2 z\right)+\vep  \nabla z\scalg\nabla V+ \rho z\leq  -2\theta\, \vep\left(|\nabla^2 u|^2+Ricc_g(\nabla u, \nabla u)+   (\nabla^2V)(\nabla u, \nabla u)\right) \\
& + 2 |\nabla (\partial_t u)|^2- 2  \nabla \varphi\scalg \nabla (\partial_t u)+ 2\theta\, [|\nabla\varphi|^2- |\nabla (\partial_tu)|^2]
%\\
%  &\qquad 
  %+|\nabla (\frac{1}{2}|\nabla u|^2)-\nabla u_t|^2
- \gamma\, \mathcal{A}(x,\nabla u) (\overline{\nabla} u, \overline{\nabla} u)
\end{split}
\ee
where we notice that
\begin{align*}
&  2 |\nabla (\partial_t u)|^2- 2 \nabla \varphi\scalg \nabla (\partial_t u) + 2\theta\, [|\nabla\varphi|^2- |\nabla (\partial_tu)|^2] \\ & \quad = - \nabla (\partial_t u \scalg(  2\nabla\varphi- 2\nabla (\partial_t u) )
+ \theta \, \nabla |\nabla u|^2 \scalg ( 2\nabla\varphi- 2\nabla (\partial_t u)) -  2\theta | \nabla\varphi-  \nabla (\partial_t u) |^2 \\
& \quad = \nabla z \scalg ( 2\nabla\varphi- 2\nabla (\partial_t u) ) -  2\theta\, | \nabla\varphi-  \nabla (\partial_t u) |^2  -\gamma\, u\, 
\nabla u \scalg (  2\nabla\varphi- 2\nabla (\partial_t u))
\\
& \quad \leq  \frac1{\theta} | \nabla z|^2 + \frac1\theta\, \gamma^2\, u^2 \, | \nabla u|^2
\,.
\end{align*}
By construction of $\mathcal{A}(x,\nabla u)$, we have
$$
\mathcal{A}(x,\nabla u)( \overline{\nabla} u,\overline{\nabla} u)=||\nabla u|^2-\partial_t u|^2+\vep|\nabla u|^2
$$
so that, inserting the above estimates in \rife{hard-loc-cafe}, we obtain
\begin{align*}
- & tr\left(\mathcal{A}(x,\nabla u)\overline{\nabla}^2 z\right)+\vep  \nabla z\scalg\nabla V+ \rho z\leq  -2\theta\, \vep\left(|\nabla^2 u|^2+Ricc_g(\nabla u, \nabla u)+  (\nabla^2V)(\nabla u, \nabla u)\right) 
 \\ &  \quad +\frac1{\theta} | \nabla z|^2 + \frac1\theta\, \gamma^2\, u^2 \, | \nabla u|^2   -\gamma\, ||\nabla u|^2-\partial_t u|^2- \gamma\, \vep |\nabla u|^2\,.
 \end{align*}
Using $\gamma\, u^2\leq 1$ and the regularity of $V$ we get
\be\label{ineq:local_m}
\begin{aligned}
 & -tr\left(\mathcal{A}(x,\nabla u)\overline{\nabla}^2 z\right) + \vep \nabla z\scalg\nabla V+ \rho z 
% \\
% & \quad 
 \leq  \left(\frac\gamma\theta + \hat{C}\right) |\nabla u|^2+\frac1\theta |\nabla z|^2  -\gamma\,  | |\nabla u|^2-\partial_t u|^2 
\end{aligned}
\ee
 where $\hat C$ is a constant depending on the lower bound of $Ric_g+ \nabla^2 V$.
Given $L>0$, let
$$
\psi\coloneqq L\left (\frac{1}{(t-a)^2}+\frac{1}{(b-t)^2}\right)
$$
Since $\psi$ blows-up at $t=a,b$, we have that $z-\psi$ admits a maximum point in $(a,b)\times M $. In such point we have $\nabla z=0$ and
$$
-tr\left(\mathcal{A}(x,\nabla u)\overline{\nabla}^2 z\right)\geq-tr\left(\mathcal{A}(x,\nabla u)\overline{\nabla}^2 \psi\right)=-6L\left (\frac{1}{(t-a)^4}+\frac{1}{(b-t)^4}\right).
$$
If $L_0\coloneqq\max(z-\psi)\geq \frac{1}{2}$, then, at a maximum point of $z-\psi$ we have
$$
|\nabla u|^2-\partial_t u=(1-\theta)|\nabla u|^2+z-\gamma\frac{u^2}{2}\geq (1-\theta) |\nabla u|^2+\psi+L_0-\frac{1}{2}\geq (1-\theta) |\nabla u|^2+\psi>0
$$
so
$$
||\nabla u|^2-\partial_t u|^2\geq (1-\theta)^2 \, |\nabla u|^4+ \psi^2.
$$
Using all of these in \eqref{ineq:local_m}, we get 
$$
-6L\left (\frac{1}{(t-a)^4}+\frac{1}{(b-t)^4}\right)\leq\,\left(\frac\gamma\theta+ \hat{C}\right) |\nabla u|^2 -\gamma\, (1-\theta)^2 |\nabla u|^4- \gamma\psi^2
$$
which gives
\begin{align*}
\left(\de\, L^2 -6L\right)\left(\frac{1}{(t-a)^4}+\frac{1}{(b-t)^4}\right) & \leq\,\left(\frac\gamma\theta+ \hat{C}\right) |\nabla u|^2 -\gamma\, (1-\theta)^2 |\nabla u|^4 
\\ & \leq K (1+ \|u\|_{L^\infty((a,b)\times M )}^2)
\end{align*}
%)\leq\hat{C}|\nabla u|^2 -\de  \frac{1}{4}|\nabla u|^4
%$$
%We note that $\max_r\left(\hat{C}r^2 -\de  \frac{1}{4}r^4\right)$ is finite and depends on $||u||_{L^\infty((a,b)\times M )}$, 
for some $K$ only depending on $\theta$ and $\hat C$. But the inequality above cannot hold for any too large $L$ (handling constants with care, this occurs for   $L=O(K [(b-a)\vee 1]^4\, (1+ \|u\|_{L^\infty((a,b)\times M )}^2)) $). The conclusion is that we have $ \max(z-\psi)\leq \frac12$, hence
$$
z \leq L\left(\frac{1}{(t-a)^2}+\frac{1}{(b-t)^2}\right)+\frac12\,.
$$
Using the definition of   $z$   leads to 
$$
\theta\, |\nabla u|^2 - \partial_t u \leq L\left(\frac{1}{(t-a)^2}+\frac{1}{(b-t)^2}\right)+\frac12
$$
and choosing $\theta>\frac12$, from $\partial_t u = \frac12 |\nabla u|^2- \vep (\log(m)+V) $  we obtain  \rife{bound:local_m} with $\kappa=\theta-\frac12<\frac12$.
%$$
%\vep \log(m(t))+V(x) \leq L\left(\frac{1}{(t-a)^2}+\frac{1}{(b-t)^2}\right)+L_0.
%$$
\end{proof}

\subsection{Existence of smooth solutions for a penalized problem}

We first collect all the above ingredients to show that a penalized version of the optimality system admits a classical solution. We consider, for $\de>0$, the problem
\begin{equation}
\label{2nd-order-de}
\begin{cases}
    -tr\left(\mathcal{A}(x,\nabla u)\circ\overline{\nabla}^2u\right)+ \vep \nabla u\scalg \nabla V(x)=0 &\text{in $Q_T$}\\
    -\partial_t u+\frac{1}{2}\abs{\nabla u}^2=\delta u+\epsilon(\log(m_1)+V(x)) &\text{in $\Sigma_T $}\\
    -\partial_t u+\frac{1}{2}\abs{\nabla u}^2+\delta u=\epsilon( \log(m_0)+V(x) )&\text{in $\Sigma_0 $.}\\
\end{cases}
\end{equation}
which is equivalent, reasoning as in Section \ref{1st-to-2nd-order}, to the system
 \begin{equation}
\label{mfg-de}
\begin{cases}
 - \partial_t u+\frac{1}{2}\abs{\nabla u}^2= \epsilon (\log(m)+V) &\hbox {in $Q_T $}\\
\partial_t m- div_g( m \nabla u)=0 & \hbox{in $Q_T $}\\
\de u(T)= \vep\log(m(T))-\vep\log(m_1) \,, & \hbox{in $\Sigma_T$}\\ \de u(0)= \vep\log(m_0)-\vep\log(m(0))  \, &
 \hbox{in $ \Sigma_0 $.}
\end{cases}
\end{equation} 

The auxiliary penalized problem \rife{mfg-de} has the advantage that the $L^\infty$-norm of $u$ is controlled, for $\de>0$, see \rife{deltau}. The estimates derived  from the elliptic theory (see Appendix A) yield a smooth solution, with $m>0$, provided the marginals are positive and smooth.

\begin{theorem}\label{teo-delta} Assume  that $V\in W^{2,\infty}(M)$, $m_0, m_1\in C^{1,\alpha}( M )$ and $m_0, m_1>0$ in $ M $. For every $\de>0$, there exists a unique smooth solution $(u^\de, m^\de)$ of \rife{mfg-de}, in the sense that $u\in C^{2,\alpha}(Q_T)\cap C^{1,\alpha}(\overline Q_T), m\in C^{1,\alpha}(Q_T)\cap C^{0,\alpha}(\overline Q_T)$, $m>0$ and the equations are satisfied in a classical sense.
\end{theorem}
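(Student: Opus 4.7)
\smallskip
\textit{Proof proposal.} The plan is to apply the continuity method to the quasilinear elliptic reformulation \rife{2nd-order-de} viewed as a PDE for $u$ alone on $\overline{Q}_T$; once a smooth $u$ is in hand, the density is recovered via
$$
m = \exp\Bigl(\tfrac{1}{\vep}\bigl(\tfrac12|\nabla u|^2 - \partial_t u\bigr) - V\Bigr),
$$
which is automatically strictly positive and smooth, and inserting it into the continuity equation reduces, as shown in Section \ref{1st-to-2nd-order}, to the quasilinear equation \rife{elliptic:long}. I would then introduce a one-parameter family of data by $m_i^\sigma := (1-\sigma)\mathrm{Vol}(M)^{-1} + \sigma m_i$ and $V^\sigma := (1-\sigma)\log \mathrm{Vol}(M) + \sigma V$ for $\sigma\in[0,1]$: at $\sigma=0$ the explicit pair $u\equiv 0$, $m\equiv \mathrm{Vol}(M)^{-1}$ solves the corresponding system, at $\sigma=1$ one recovers the original problem, and the interpolants stay uniformly $C^{1,\alpha}$, uniformly positive, with $V^\sigma$ uniformly $W^{2,\infty}$.

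Let $S\subset[0,1]$ denote the set of parameters for which a classical solution $u^\sigma\in C^{2,\alpha}(\overline{Q}_T)$ exists. For \emph{openness} of $S$, one linearizes \rife{2nd-order-de} together with the two boundary conditions around a known solution $u^{\sigma_0}$: the linearized principal symbol is $\mathcal{A}(x,\nabla u^{\sigma_0})$, pointwise elliptic, and the coercive sign $\delta>0$ makes the boundary operators oblique in the good direction; linear Hölder-space solvability (as developed in Appendix A) combined with the implicit function theorem then yields solutions for all $\sigma$ near $\sigma_0$. For \emph{closedness}, one invokes the a priori estimates proved in the previous subsections: \rife{deltau} gives a uniform $L^\infty$ bound on $u^\sigma$, and Theorem \ref{thm:Lip_u_par} upgrades this to a uniform Lipschitz bound $\|\overline{\nabla} u^\sigma\|_\infty\le C$, after which $\mathcal{A}(x,\nabla u^\sigma)$ becomes uniformly elliptic along the family and classical Schauder theory for quasilinear oblique problems yields uniform $C^{2,\alpha}$ bounds, hence compactness.

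Combined with the trivial solution at $\sigma=0$, openness and closedness give $1\in S$ and produce the sought classical solution. Positivity of $m$ is built into the defining exponential formula, and uniqueness follows from the strict convexity of the penalized variational functional whose Euler--Lagrange system is \rife{mfg-de} (equivalently, from a maximum-principle argument on the difference of two solutions, using the coercive sign of the $\delta u$ boundary terms and the monotonicity of $\log m$ in $m$).

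The main obstacle is the non-uniform ellipticity of the quasilinear symbol $\mathcal{A}(x,\nabla u)$: although pointwise positive definite, its ellipticity constants degenerate as $|\nabla u|\to\infty$, since $(\eta\scalg w-\bar w)^2$ grows with $|\eta|^2$ while the transversal term $\vep|w|^2$ does not. The Lipschitz estimate of Theorem \ref{thm:Lip_u_par} is precisely what restores uniform ellipticity along the homotopy and makes the Schauder machinery of Appendix A applicable; this is why the continuity method must be run on the penalized problem \rife{mfg-de} first, where the $\delta u$ boundary terms supply the $L^\infty$ bound on which the gradient estimate relies.
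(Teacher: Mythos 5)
Your proposal captures the right overall strategy --- continuity method on the quasilinear elliptic reformulation \eqref{2nd-order-de}, with the $\delta$-penalty supplying the $L^\infty$ bound, Theorem \ref{thm:Lip_u_par} supplying the Lipschitz bound and hence uniform ellipticity of $\mathcal{A}(x,\nabla u)$, and Schauder theory supplying compactness --- and correctly identifies the degeneracy of $\mathcal{A}$ for large $|\nabla u|$ as the crucial obstruction that the gradient bound removes. However, it silently drops one step the paper goes out of its way to include. The paper does not run the continuity method directly on \eqref{2nd-order-de}: it first adds an interior zeroth-order term $\rho u$ (problem \eqref{elliptic:2nd_order:app}), proves solvability for $\rho>0$ in Appendix A (Proposition \ref{teo-rho}), and only then obtains Theorem \ref{teo-delta} by letting $\rho\to 0$, using the $\rho$-independence of the bounds of Theorem \ref{thm:Lip_u_par} and Lemma \ref{lemma:E_bounded}. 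The $\rho$-term is introduced precisely so that the reference operator $-\partial_{tt}-\Delta_g+\rho$ in the second continuity method (for the linearized problem $L_\tau$) is trivially Fredholm of index zero and injective, allowing a clean citation of \cite{taylor2010partial}. If you skip $\rho$, you must independently verify that the linearized problem with $\rho=0$ is still uniquely solvable in $C^{2,\alpha}(\overline{Q}_T)$ using only $\delta>0$ (which plausibly holds, as the $\pm\delta\psi$ boundary terms give a maximum principle on the cylinder, but you should say so). Note also that the target regularity in Theorem \ref{teo-delta} is only $C^{2,\alpha}(Q_T)\cap C^{1,\alpha}(\overline{Q}_T)$ --- the paper loses boundary $C^{2,\alpha}$ regularity in the $\rho\to 0$ passage precisely because the Schauder constant of Appendix A depends on $\rho$; your direct argument, if it closes, would give the stronger $C^{2,\alpha}(\overline{Q}_T)$, so you should either justify that or only claim what the theorem asks. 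Your homotopy in the data $(m_i^\sigma, V^\sigma)$ is a perfectly acceptable substitute for the paper's homotopy $P[u]=(1-\tau)P[0]$, since $\log(m_i^\sigma)+V^\sigma$ interpolates between $0$ and $\log m_i + V$ while keeping all the $W^{1,\infty}$ and $W^{2,\infty}$ norms uniformly controlled. Finally, your uniqueness argument is sound, and matches the paper's in spirit (strict monotonicity of $\log$ and the coercive sign of $\delta$).
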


\begin{proof} We first rely on Proposition \ref{teo-rho} which is proved in the Appendix. This gives a sequence of solutions $u_\rho $ of problem \rife{elliptic:2nd_order:app}. By maximum principle, we have $\|u_\rho\|_\infty \leq \frac C\de$. By the gradient bound proved in  Theorem \ref{thm:Lip_u_par}, we have that $ \norma{\overline{\nabla}u_\rho }_\infty\leq C$. By elliptic estimates (see also Lemma \ref{lemma:E_bounded} below) we deduce first that $\|u_\rho\|_{C^{1,\alpha}(\overline Q_T)}\leq C$, and then, bootstrapping Schauder estimates, $u_\rho$ is bounded in $C^{2,\alpha}$ on any compact subset of $  Q_T$. Defining 
$$
m_\rho= \exp\left( \frac{- \partial_t u_\rho+\frac{1}{2}\abs{\nabla u_\rho}^2}\vep -V   \right)
$$
we deduce the $C^{1,\alpha}(Q_T)\cap C^{0,\alpha}(\overline Q_T)$ estimates   on $m_\rho$ and, in particular, $m_\rho$ is uniformly bounded below due to the gradient bounds for $u_\rho$. Passing to the limit as $\rho\to 0$ gives the desired solution of \rife{2nd-order-de}, hence of \rife{mfg-de}.
\end{proof}

The next step will consist in letting  $\de\to 0$ in \rife{mfg-de}, still assuming that the marginals $m_0, m_1$ are positive, showing that the minima of \rife{func} are smooth for positive smooth marginals. This step is left to the stability results of the next Section.

\section{Existence and regularity of   optimal curves}\label{sec5}

In this section we obtain the existence and the characterization of the minima of \rife{func} in terms of the optimality system \rife{opsys}, thus proving Theorem \ref{main} stated in the Introduction. We first obtain the existence of smooth minima, whenever the marginals $m_0, m_1$ are positive and smooth; this is achieved by passing to the limit as $\de\to 0$ in problem \rife{mfg-de} and using the \lq\lq elliptic \rq\rq Lipschitz estimates of Theorem \ref{thm:Lip_u_par}. Then we 
will enlarge the set of admitted marginals $m_0$ and $m_1$ to merely $L^1$, nonnegative densities.  To this purpose, we will need a relaxed definition of weak solution to the system \rife{mfg-log}, where merely sub-solutions of the Hamilton-Jacobi equation are taken into account. This kind of notion of weak solutions, introduced in \cite{Carda1} (see also \cite{CG}, \cite{CGPT}) for first order mean-field game systems, is by now well established also in the context of mean-field transport problems,   see e.g. \cite{GMST}, \cite{ORRIERI20191868}. In particular,   in this latter paper a notion of trace was  developed for functions  which are (distributional) sub-solutions of  Hamilton-Jacobi equations, e.g. when $u$ satisfies
\be\label{subsol}
 -\partial_t u+\frac{1}{2}\abs{\nabla u}^2 \leq \alpha\,,\qquad \alpha \in L^1_{loc}((0,T)\times \Omega)
\ee
in some open set $\Omega$. Relying on the fact that $u$ is nondecreasing in time, up to an absolutely continuous function, one-sided traces of $u$ were defined in the sense of limits of measurable functions (with possible range in $[-\infty, +\infty]$), see \cite[Prop 5.6]{ORRIERI20191868}. In particular, if $\alpha\in L^1(Q_T)$,  any $u$ satisfying  \rife{subsol}
admits traces at $t=0, t=T$, denoted below as $u(0), u(T)$ respectively, which are the pointwise limits of $u(t,\cdot)$ as $t\downarrow 0$ (respectively $t\uparrow T$) in the sense of measurable functions (for a possibly well defined precise representative of $u$).

\begin{definition}\label{def:weak_sol}
A pair $(u,m)$ is a weak solution of 
\begin{equation}
\label{weak:1st_order}
\begin{cases}
    -\partial_t u+\frac{1}{2}\abs{\nabla u}^2=\epsilon (\log(m)+V(x) )\quad &\text{in $Q_T $}\\
    \partial_t m-div_g(m\nabla u)=0 &\text{in $Q_T$}\\
    m(0,\cdot)=m_0, \quad m(T,\cdot)=m_1 &\text{in $ M $}
\end{cases}
\end{equation}
if $m\in C^0([0,T];\mathcal{P}( M ))\cap L^1(Q_T)$ with $m(0)=m_0, $ $m(T)=m_1$ and $\log(m)\in L^1_{loc}((0,T);L^1( M ))$; \\ $u\in L^2_{loc}((0,T);H^1( M ))$ and in  addition $m\abs{\nabla u}^2\in L^1(Q_T)$, $m\log m\in L^1(Q_T)$ and $(u,m)$ satisfy
\begin{enumerate}[i)]
    \item $u$ is a weak sub-solution satisfying, in the sense of distributions,
    \begin{equation*}
        -\partial_t u+\frac{1}{2}\abs{\nabla u}^2\le\epsilon (\log(m)+V(x) )\quad \text{in $Q_T$}
    \end{equation*}
    \item$m$ is a weak solution satisfying, in the sense of distributions, the continuity equation
    \begin{equation*}
        \partial_t m-div_g(m\, \nabla u)=0\quad \text{in $Q_T$}
    \end{equation*}
    \item $(u,m)$ satisfy the identity
    \begin{equation}
        \label{weak_sol:cross_prod}
        \int_ M  m_0u(0)dx- \int_ M  m_1u(T)dx=\iinto \left(\frac{1}{2} \abs{\nabla u}^2m+\epsilon m(\log m+V)\right)\,dxdt
    \end{equation}
    where $u(0),u(T)$ are the one-sided traces of $u$ mentioned above.
%    (which are well defined by the properties of sub-solutions of Hamilton-Jacobi equations, see \cite{ORRIERI20191868}).
\end{enumerate}
\end{definition}

The existence of weak solutions to \rife{weak:1st_order} will be  produced  through a stability argument, which relies on several steps. Some crucial a priori estimates are given in the following lemma, which applies to  solutions of  both \rife{mfg-de} and \rife{weak:1st_order} (corresponding to $\de=0$). 
%providing with a characterization of the minima of \rife{func}.  We start from the penalized problem \rife{mfg-de}, assuming the marginals to be smooth and positive, and  letting $\de\to 0$. Then we obtain the general result by approximation. We will need the following lemmas.

\begin{lemma}\label{stime} Assume that $(u,m)$ is a smooth solution to \rife{mfg-de},  for some $\de\in [0,1]$, and define $\hat u:= u- \into u(T)m_1 $.  
Let $\lambda $ be given by \rife{ricci}.  There exists a constant $\hat C=\hat C( M , d,\lambda, T, \| V\|_\infty)$ such that
\be\label{int-unif-bound-u}\begin{aligned}
 & -\frac{\hat{C}}{t} \le \hat u (t,x)  \qquad \forall (t,x)\in (0,T]\times M \,, \\
 &\hat u (t,x)  \le \frac{\hat{C}}{T-t}\qquad \forall (t,x)\in [0,T)\times M 
\end{aligned}\ee
and 
\be\label{glob-ene}
\begin{split}
& \into \hat u (0)m_0\,dx \geq  -\vep \log\left(\into e^{-V}dx\right) \,
\\
&
 \iinto \left(m\, |\nabla u|^2+ \vep m\log m\right)dxdt \leq \hat C\,.
 \end{split}
\ee
In addition, for any $0<a<b<T$ there exists $C$, depending on $\hat C$ and additionally on  $a, (T-b)$, such that 
\be\label{acca-loc}
\int_a^b \!\!\!\into |\nabla u|^2\, dxdt + \vep \int_a^b\!\!\! \into |\log m|\, dxdt \leq C\,.
\ee
Finally, if $m_0,m_1>0$ on $ M $, then there exists $K$, depending on $\hat C$ and additionally on  $\min\limits_M [\vep \log(m_0e^V)]$ and $\min\limits_M [ \vep \log(m_1e^V)]$, such that 
\be \label{bound:unif_u:pos_marg}
||\hat{u}||_{L^\infty([0,T]\times M )}<K\,.
\ee
\end{lemma}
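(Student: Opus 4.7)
The proof strategy is duality between the two equations of \rife{mfg-de}. For any admissible $(\mu,v)$ for the continuity equation, differentiating $\into u\,d\mu$, using $\partial_t u = \frac12|\nabla u|^2 - \vep(\log m + V)$ and Young's inequality $\nabla u\scalg v\le \frac12|\nabla u|^2+\frac12|v|^2$ yields $\frac{d}{dt}\into u\,d\mu \geq -\frac12\into|v|^2\,d\mu - \vep\into(\log m + V)\,d\mu$; then the Donsker--Varadhan bound $\into (\log m+V)\,d\mu \leq \cH(\mu;\nu)$ produces the master inequality
\[
\into u(t)\,d\mu(t) - \into u(s)\,d\mu(s) \geq -\cF_\vep^{[s,t]}(\mu,v),\qquad 0\leq s\leq t\leq T,
\]
with equality when $(\mu,v)=(m,\nabla u)$. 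That equality case is the fundamental identity $\iinto(\frac12 m|\nabla u|^2 + \vep m(\log m + V))\,dxdt = \into u(0)m(0) - \into u(T)m(T)$.

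The pointwise bounds \rife{int-unif-bound-u} follow from the master inequality on $[t_0,T]$ with $\mu(T)=m_1$: $\into \hat u(t_0)\,d\mu(t_0) \leq \cF_\vep^{[t_0,T]}(\mu,v)$. For the upper bound I would take $\mu(t_0)$ to be a regularized Dirac at $x_0$ (heat kernel $p_\eta(x_0,\cdot)$ with $\eta\to 0$) connected to $m_1$ by the heat-flow-plus-geodesic construction of Proposition \ref{prop:exist_comp} on the interval $[t_0,T]$; by Remark \ref{T-dependence}, the resulting cost is bounded by $\hat C/(T-t_0)$. Passing to $\eta\to 0$ using continuity of the smooth $\hat u(t_0,\cdot)$ gives $\hat u(t_0,x_0)\leq \hat C/(T-t_0)$. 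The symmetric lower bound $\hat u(t_0,x_0)\geq -\hat C/t_0$ comes from the same inequality on $[0,t_0]$ with $\mu(0)=m_0$ and $\mu(t_0)\to\delta_{x_0}$, combined with the lower bound on $\into \hat u(0) m_0$ proved next.

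For the global estimates \rife{glob-ene}, the penalized boundary conditions $\de u(0)=\vep(\log m_0-\log m(0))$ and $\de u(T)=\vep(\log m(T)-\log m_1)$ combined with monotonicity of $\log$ give $\into u(0)(m_0-m(0))\geq 0$ and $\into u(T)(m(T)-m_1)\geq 0$, so the fundamental identity yields $\iinto(\frac12 m|\nabla u|^2+\vep m(\log m+V))\,dxdt \leq \into \hat u(0)m_0$, which is bounded above by $\hat C/T$ via the pointwise estimate on $\hat u(0)$. The lower bound $\into \hat u(0) m_0 \geq -\vep\log\into e^{-V}$ follows from Jensen's inequality $\into m(\log m + V)\geq -\log\into e^{-V}$ applied in the same identity. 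For the local estimate \rife{acca-loc}, I would integrate the Hamilton--Jacobi equation in time on $[a,b]$, obtaining $\frac12\int_a^b|\nabla u|^2\,dt = u(a)-u(b) + \vep\int_a^b(\log m+V)\,dt$; integrating in $x$ and using the pointwise bounds on $\hat u(a), \hat u(b)$ from the previous step plus the pointwise upper bound $\vep\log m \leq K -\kappa|\nabla u|^2$ from Proposition \ref{prop:local_bound_m}, the decomposition $|\log m|=2(\log m)_+ - \log m$ converts the two-sided control on $\vep\iint_{(a,b)\times M}\log m$ into the claimed estimates on $\vep\iint|\log m|$ and $\iint|\nabla u|^2$.

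Finally, for \rife{bound:unif_u:pos_marg}, positivity of $m_0,m_1$ makes $\vep\log(m_ie^V)$ uniformly bounded, and admissible test arcs in the duality argument can now be chosen to stay uniformly bounded below without any Dirac regularization degenerating at $t=0$ or $t=T$; this yields a uniform control on $\hat u(t_0,x_0)$ across the whole interval, giving the $L^\infty$ bound with constant depending on $\min_M\vep\log(m_ie^V)$. The main obstacle in the whole proof is the construction of test arcs from (regularized) Dirac masses achieving the sharp cost scaling $\hat C/(T-t_0)$ (and $\hat C/t_0$ in the symmetric case): it requires a careful adaptation of the heat-flow construction of Proposition \ref{prop:exist_comp} over short time intervals, using the time-rescaling of Remark \ref{T-dependence}, so as to control simultaneously the kinetic energy (which blows up from the initial singularity) and the relative entropy along the regularized path.
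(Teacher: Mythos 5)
Your overall strategy for \rife{int-unif-bound-u}, \rife{glob-ene}, and \rife{acca-loc} matches the paper's: the duality (master) inequality against test curves of the continuity equation, the crucial identity $\into u(0)m_0-\into u(T)m_1=\cF_\vep(m,\nabla u)$ with the penalized boundary terms having the good sign, Jensen's inequality for the lower bound, and test curves from (regularized) Diracs to the marginals built from the heat flow of Proposition \ref{prop:exist_comp} to get the $\hat C/(T-t)$ and $\hat C/t$ scalings. The paper does not regularize the Dirac (since $\hat u$ is smooth one can pair directly with $\delta_{x_0}$), and for \rife{acca-loc} it avoids Proposition \ref{prop:local_bound_m} altogether: after integrating the Hamilton--Jacobi equation in space and time, it suffices to note that $\into(\log m)_+\,dx\le\into m\,dx=1$ to control the positive part of the source, and then the $|\nabla u|^2$ and $\vep\into|\log m|$ bounds fall out; your route through the pointwise bound $\vep\log m\le K-\kappa|\nabla u|^2$ also works, but is less elementary.

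The argument you sketch for \rife{bound:unif_u:pos_marg}, however, has a genuine gap. The obstruction near $t=T$ (resp.\ $t=0$) is not a degeneration of the Dirac regularization but the intrinsic cost of transporting a Dirac at $x_0$ onto $m_1$ in time $T-t_0$: this scales like $1/(T-t_0)$ regardless of whether $m_1$ is bounded below, since it essentially encodes a Wasserstein distance squared over time. Choosing ``admissible test arcs that stay uniformly bounded below'' does not remove this blow-up, and indeed every pointwise bound the duality argument can produce (as in the derivation of \rife{int-unif-bound-u}) is independent of $m_0,m_1$ by construction, whereas \rife{bound:unif_u:pos_marg} must depend on $\min_M[\vep\log(m_ie^V)]$. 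The paper instead exploits positivity of the marginals through the boundary conditions of the \emph{elliptic} formulation \rife{2nd-order-de}: the linear function $\psi(t)=\hat C/T+Bt$ solves the interior quasilinear equation exactly (all its second derivatives vanish and $\nabla\psi=0$), and is a strict supersolution of the boundary condition at $t=T$ provided $B>-\min_M[\vep\log(m_1e^V)]$ — a choice only possible when $m_1>0$; since $\psi(0)\ge\hat u(0)$ by the already-established bound \rife{hat1}, comparison yields $\hat u\le\psi$. A symmetric subsolution gives the lower bound. Without this (or some equivalent PDE comparison) your proof of the uniform $L^\infty$ bound does not go through.
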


\begin{proof} By definition of $\hat u$, we have
\begin{align*}
& \int_0^T\!\!\int_ M \left(\frac{1}{2} \abs{\nabla u}^2m+\epsilon m( \log m+ V) \right)\,dxdt = \int_ M  m(0)u(0)dx-\into m(T) u(T) dx
\\ & 
\qquad  = \int_ M  (m(0)-m_0)u(0)dx-\into (m(T)-m_1) u(T)dx + \into m_0 \hat u(0)\, dx 
\end{align*}
This implies, using the initial-terminal conditions of \rife{mfg-de},
\be\label{um0sot0}
\begin{split}
& \int_ M  m_0 \hat u(0)dx =    \int_0^T\!\!\int_ M \left( \frac{1}{2} \abs{\nabla u}^2m+\epsilon m( \log m+ V)\right) \,dxdt    \\ & \quad 
+   \frac\vep\de\into (m_0-m(0))  [\log(m_0)- \log(m(0)) ] \,dx \\
& \quad + \frac\vep\de \into (m(T)-m_1) [\log m(T)-\log(m_1)]\,dx\,,
\end{split}
\ee
where last two terms should be treated as zero in case that $\de=0$. In particular, since the right-side is bounded below, we deduce that 
\be\label{um0sot}
 \begin{split}
 \int_ M  m(0) \hat u(0)dx &  \geq \int_0^T\!\!\int_ M \left(\frac{1}{2} \abs{\nabla u}^2m+\epsilon m( \log m+ V) \right)\,dxdt   \\
 & \geq   -\vep\,   \log\left(\into e^{-V}dx\right)
 \end{split}
\ee 
where we used Jensen's inequality in the last step. Of course, $\hat u$ satisfies the same Hamilton-Jacobi equation as $u$, from which we derive now all the estimates. 

At first, for any fixed $x_0\in M$,  letting $\delta_{x_0}$ be  the Dirac's delta concentrated on $x_0$,   we consider the solution $\mu$ to the continuity equation
\begin{equation*}
    \begin{cases}
        \partial_t \mu-div_g(\mu v)=0 &\text{in $(s,T)\times M $}\\
        \mu(T)=m_1, \mu(s)=\delta_{x_0} &\text{in $ M $}\\
    \end{cases}
\end{equation*}
which was built in Proposition \ref{prop:exist_comp}.  Such a curve satisfies the estimate 
\begin{equation*}
    \int^T_s\!\!\!\int_ M \left(\frac{\abs{v}^2}2\,  \mu+\epsilon \mu( \log \mu+ V) \right)dxdt\le \frac{K}{T-s}
\end{equation*}
with $K$ depending only on $M,d,\lambda, T$ and $ \|V\|_\infty $, but independent of  $x_0$.  
We multiply by $\mu$ the equation of $\hat u$  and we integrate over $(s,T)\times M $: we get
\begin{align*}
    \hat u (x_0)&= \int^T_s\!\!\!\int_ M  v\cdot \nabla u \,   \mu-\int^T_s\!\!\!\int_ M  \frac{1}{2}
    \abs{\nabla u }^2\,  \mu +\epsilon\int^T_s\!\!\!\int_ M  (\log(m ) + V)\mu \,dxdt \\
      &     \leq \int^T_s\!\!\!\int_ M  \frac{1}{2}\abs{v}^2\,   \mu + \epsilon\int^T_s\!\!\!\int_ M  (\log(m ) + V)\mu \,dxdt\,.
    \end{align*}
Using   that $a\log(b)\le a\log(a)+b$ for every $a,b\in(0,+\infty)$ we obtain
 \begin{align*}
    \hat u (x_0)& \le \int^T_s\!\!\!\int_ M  \frac{1}{2}\abs{v}^2\,   \mu+\epsilon\int^T_s\!\!\!\int_ M  \mu(\log(\mu)+ V) \,dxdt+ \vep (T-s)   \\
        %\intertext{ finally by the construction of $\mu$ and the $\lambda$-convexity of the relative entropy functional along the }
    &\le \frac{K}{T-s} +  \vep (T-s)  \,.
\end{align*}
%Now we recall that in \cite{Daneri_2008} the relative entropy functional w.r.t. a reference measure on a compact, connected Riemmannian manifold with $Ric_g\geq -\lambda I$   is $\lambda$-convex along the geodesics of the 2-Wasserstein distance.
So there exists a constant $\hat{C}>0$, depending on $T$, $\|V\|_\infty$ and $M$, but independent of  $\epsilon$ and $x_0$, such that for every $t\in[0,T)$ 
\begin{equation}\label{hat1}%\label{weak:thm:bounds_u}
   \hat u (t,x_0)\le \frac{\hat{C}}{T-t}\,.
    %\quad \text{and }\frac{1}{Vol( M )}\int_ M  u_n(0,\cdot)\,dx\le\frac{\overline{C}}{T}.
\end{equation}
Reasoning in a similar way, namely using  an analogous curve  between $m_0$ and $\delta_{x_0}$, and the bound \eqref{um0sot}, we conclude that there exists a constant $\hat{C}>0$ such that for every $t\in(0,T]$
\begin{equation}\label{hat2}
    -\frac{\hat{C}}{t}\le \hat u (t,x_0).  
    %\le \frac{\overline{C}}{T-t}\quad \text{and }\frac{1}{Vol( M )}\int_ M  u_n(0,\cdot)\,dx\le\frac{\overline{C}}{T}.
\end{equation}
By the arbitrariness in the choice of $x_0\in M$,  we get \eqref{int-unif-bound-u}. 
%Again, if we repeat these calculation but using {\color{red}a curve} between $m_1$ and $m_0$ we get
%\begin{equation}\label{weak:thm:bounds_u_m}
%    \int_ M  \hat u (0,\cdot)m_0\,dx\le\frac{K}{T}.
%\end{equation}

Fix now  $0<a<b<T$, from the equation of $\hat u$ we have
\begin{align*}
\int_ M  \hat u (a,\cdot)\,dx-\int_ M  \hat u(b,\cdot)\,dx & +\int_a^b\!\!\int_ M \frac{1}{2}\abs{\nabla u }^2\,dxdt 
\\  & \leq - \vep \int_a^b\!\!\int_ M |\log(m)| dxdt + \vep\, c_ M  ( 1+ \|V\|_\infty)\,.
\end{align*}
Using \rife{hat1} and   \rife{hat2}, we obtain estimate \rife{acca-loc}. Coming back to \rife{um0sot} and {  using the upper bound \rife{hat1}}, we also get \rife{glob-ene}.
%
%Now we improve the estimate on $\hat u(t)$. We consider $v\coloneqq \hat{u}-At$ for $A>0$ to be chosen later. Then one can see from system \ref{2nd-order-de} that $v$ cannot have positive maximum inside $Q$. We claim that, if $m_1>0$ in $ M $, then $v$ cannot have maximum at $t=T$ either, provided $A$ sufficiently large. Indeed, if $v$ has a maximum at $t=T$, then we must have
%$$
%0\le \hat{u}_t(T,x)-A=\frac{|\nabla \hat{u}|^2}{2}-\de \hat{u}-V(x)-\vep \log m_1-A\le\de ||u||_\infty+||V||_\infty-\vep\log(\min m_1)-A
%$$
%and we get a contradiction if $A$ is large enough (we use Lemma \ref{lemma:delta_u} if $\de>0$). So $v$ reaches its maximum at $t=0$; using \eqref{hat1}, we deduce that 
%$$
%\sup_Q \hat{u}\le C(\hat{C}, -\vep\inf_M\log m_1).
%$$
%Provided that $m_0$ is strictly positive, we can analogously prove a bound from below and obtain the uniform bound  \ref{bound:unif_u:pos_marg}.
%
%

Finally, to get \rife{bound:unif_u:pos_marg} we use the comparison principle on the equation \rife{2nd-order-de}. In fact, the linear function $\psi:=\frac {\hat C}T+Bt$ is clearly a solution of the same equation, and it is a  strict supersolution at $t=T$   if   $\epsilon(\log(m_1 e^V)) > -B$. This is possible if $m_1>0$, choosing $B$ sufficiently large. Since $\psi(0)>\hat u(0)$ by \rife{hat1}, one can compare $\hat u$ with $\psi$ (note that $\hat u$ satisfies the same condition as $u$ at $t=T$ up to a bounded term, and  $B$ can be  chosen possibly larger to compensate). Hence $\hat u \leq   \frac {\hat C}T+Bt$. Similarly we reason to get the estimate from below using the positivity of $m_0$, and we conclude with the global bound \rife{bound:unif_u:pos_marg}. 
\end{proof}

We notice that the local bounds of $\hat u$, given by \rife{int-unif-bound-u}, are totally independent of $m$. This allows us to  infer  the local  boundedness of $m$ too, as a consequence of Proposition \ref{prop:local_bound_m}.

\begin{corollary}\label{boundmloc} Under the assumptions of Lemma \ref{stime}, given any $0<a<b<T$ there exists a constant $C= C( M , d,\lambda, T, \|V\|_\infty, \|(Ric_g + D^2V)_-\|_\infty, \vep, a, T-b)$ such that
$$
\| m(t)\|_{L^\infty((a,b)\times M)} \leq C\,.
$$
\end{corollary}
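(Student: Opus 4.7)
The plan is to combine two ingredients already established: the local $L^\infty$ bound on the normalized Hamilton--Jacobi solution $\hat u = u - \into u(T) m_1$ from Lemma \ref{stime}, and the local density estimate from Proposition \ref{prop:local_bound_m}. The only delicate point is bridging the gap between ``$\|u\|_{L^\infty((a,b)\times M)}$'' appearing in the hypothesis of Proposition \ref{prop:local_bound_m} and the merely pointwise-in-$t$ upper/lower bounds \rife{int-unif-bound-u}; this is handled by working on a slightly enlarged time interval.

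First, I would fix $a', b'$ with $0 < a' < a < b < b' < T$. By the local estimate \rife{int-unif-bound-u} of Lemma \ref{stime}, applied on $[a', b']$,
$$
\|\hat u\|_{L^\infty([a', b']\times M)} \leq \hat C \, \max\!\left( \tfrac{1}{a'}, \tfrac{1}{T-b'}\right) =: C_1,
$$
with $C_1$ depending only on $M, d, \lambda, T, \|V\|_\infty, a', T-b'$. Note that the interior equation in \rife{mfg-de} (equivalently the quasilinear equation \rife{elliptic:short} in $Q_T$, which is the case $\rho=0$ of \rife{elliptic:2nd_order:par}) is invariant under adding a constant to $u$. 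Hence $\hat u$ solves the same equation as $u$ in $Q_T$, and Proposition \ref{prop:local_bound_m} is applicable to $\hat u$ on the subinterval $(a', b') \subset (0,T)$.

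Applying Proposition \ref{prop:local_bound_m} to $\hat u$ on $(a', b')$ with any fixed $\kappa \in (0, 1/2)$ yields, for all $(t,x) \in (a', b') \times M$,
$$
\vep \bigl( \log m(t,x) + V(x)\bigr) + \kappa |\nabla u(t,x)|^2 \leq K\!\left( \tfrac{1}{(t-a')^2} + \tfrac{1}{(b'-t)^2}\right),
$$
where $K$ depends on $\kappa, T, C_1$ and $\|(Ric_g + \nabla^2 V)_-\|_\infty$, hence ultimately on the data listed in the statement. Restricting to $t\in [a,b]$ gives $t-a' \geq a-a' > 0$ and $b'-t \geq b'-b > 0$, so the right-hand side is bounded by some constant $K_1$ depending additionally on $a-a'$ and $b'-b$. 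Dropping the nonnegative term $\kappa|\nabla u|^2$ and using $V \in L^\infty(M)$, we deduce
$$
\vep \log m(t,x) \leq K_1 + \vep \|V\|_\infty \qquad \forall (t,x)\in [a,b]\times M,
$$
that is, $m(t,x) \leq \exp\!\bigl( (K_1 + \vep\|V\|_\infty)/\vep\bigr)$, which is the desired uniform $L^\infty$ bound.

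There is no serious obstacle beyond keeping track of the dependence of constants: the only technical point is to choose the enlarged interval $(a',b')$ so that Lemma \ref{stime} provides a global-in-space sup bound for $\hat u$ on $[a',b']\times M$, feeding Proposition \ref{prop:local_bound_m}. The final constant depends on $M, d, \lambda, T, \|V\|_\infty, \|(Ric_g + \nabla^2 V)_-\|_\infty, \vep$ and on $a, T-b$, as claimed; crucially it does not depend on $\de \in [0,1]$, since both \rife{int-unif-bound-u} and Proposition \ref{prop:local_bound_m} are independent of $\de$.
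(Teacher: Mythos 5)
Your proof is correct and follows the same route as the paper's: combine the pointwise $L^\infty$ bound on $\hat u$ from \eqref{int-unif-bound-u} with the local density estimate of Proposition \ref{prop:local_bound_m}. The one place where you go beyond what the paper spells out is the enlargement to $(a',b')\supsetneq[a,b]$: as you correctly observe, applying \eqref{bound:local_m} directly with endpoints $a,b$ would leave the weight $\frac{1}{(t-a)^2}+\frac{1}{(b-t)^2}$ unbounded on $(a,b)$, so the intermediate interval is needed to get a genuine sup bound on $[a,b]\times M$. The paper's two-line proof implicitly performs this same maneuver (its final constant depends on $\max(a^{-1},(T-b)^{-1})$, consistent with a choice such as $a'\simeq a/2$, $T-b'\simeq (T-b)/2$), so this is a matter of exposition rather than a different idea. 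You also correctly check that the shift $u\mapsto \hat u$ leaves the interior equation (and the relation $\partial_t\hat u=\tfrac12|\nabla\hat u|^2-\vep(\log m+V)$) unchanged, so that Proposition \ref{prop:local_bound_m} is applicable to $\hat u$ with the same $m$, and that $K$ there is independent of $\de$.
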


\begin{proof} By estimates \rife{int-unif-bound-u}, we have $\|\hat u\|_{L^\infty((a,b)\times M)} \leq \hat C\, \max(a^{-1} , (T-b)^{-1})$. Hence, from \rife{bound:local_m} we deduce that $\vep\, \log(m(t))$ is bounded above for $t\in (a,b)$ by some constant depending on $\hat C, T, \max(a^{-1} , (T-b)^{-1})$  and $\|(Ric_g + D^2V)_-\|_\infty$, which yields the conclusion.
\end{proof}

Thanks to the global  bound \rife{bound:unif_u:pos_marg}, we can now pass to the limit as $\de\to 0$ in \rife{mfg-de}, obtaining the existence of smooth minimizers for positive smooth marginals.

\begin{theorem}\label{smoothex} Assume that $m_0, m_1\in W^{1,\infty}(M)$ and $m_0, m_1 >0$ in $M$. Let $V\in W^{2,\infty}(M)$, $\vep>0$.  Then there exists a (unique) smooth solution $(u,m)$ of  the system   \eqref{opsys} such that  $\int_ M  u(T)m_1=0$, $u\in C^{2,\alpha}(Q_T)\cap C^{1,\alpha}(\overline Q_T)$, $m \in C^{1,\alpha}(Q_T)\cap C^{0,\alpha}(\overline Q_T)$, $\alpha\in (0,1)$. In addition we have $m>0$ in $\overline Q_T$, $u$ is  a classical solution of the elliptic equation \rife{elliptic:short}, and $(m,\nabla u)$ is the unique minimum of  the functional $\mathcal{F}_\epsilon$ in \rife{func}.  

Finally, if $V\in C^{k,\alpha}(M)$, we have $u\in C^{k+1, \alpha}(M), m\in C^{k, \alpha}(M)$.
\end{theorem}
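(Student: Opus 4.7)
The plan is to construct the desired smooth solution as the limit $\delta \downarrow 0$ of the solutions $(u^\delta, m^\delta)$ of the penalized system \eqref{mfg-de} furnished by Theorem \ref{teo-delta}, and then to invoke Lemma \ref{itsmin} to identify it with the minimizer of $\mathcal{F}_\epsilon$. The key normalization is $\hat u^\delta \coloneqq u^\delta - \int_M u^\delta(T)\, m_1\, dx$, which satisfies the same quasilinear elliptic equation \eqref{elliptic:short} as $u^\delta$ in the interior (adding a constant does not affect $\partial_t u$ or $\nabla u$) and the same boundary conditions as in \eqref{elliptic:2nd_order:par} up to a replacement of $\vep \log m_i$ by $\vep \log m_i + \delta c^\delta$, where $c^\delta = \int_M u^\delta(T) m_1\, dx$ is a constant—hence invisible in any $W^{1,\infty}$ gradient.

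First I would gather the uniform estimates on $\hat u^\delta$. Since $m_0, m_1 \in W^{1,\infty}(M)$ are strictly positive, estimate \eqref{bound:unif_u:pos_marg} of Lemma \ref{stime} yields the $\delta$-uniform bound $\|\hat u^\delta\|_{L^\infty(\overline Q_T)} \leq K$. Theorem \ref{thm:Lip_u_par} applied to $\hat u^\delta$ then produces $\|\overline\nabla \hat u^\delta\|_\infty \leq C$, whose constant depends only on $\|\vep \log m_0\|_{W^{1,\infty}}, \|\vep \log m_1\|_{W^{1,\infty}}, \|\vep V\|_{W^{2,\infty}}$. Using the Hamilton-Jacobi equation in the form
\begin{equation*}
m^\delta = \exp\!\Bigl(\tfrac{1}{\vep}\bigl(-\partial_t \hat u^\delta + \tfrac{1}{2}|\nabla \hat u^\delta|^2\bigr) - V\Bigr),
\end{equation*}
the Lipschitz bound translates into a two-sided bound $0 < c \leq m^\delta \leq C$ on $\overline Q_T$, uniform in $\delta$. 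Consequently \eqref{elliptic:short} becomes uniformly elliptic with H\"older coefficients, and Schauder theory (interior plus estimates at the nonlinear boundaries $\Sigma_0, \Sigma_T$) delivers uniform $C^{2,\alpha}_{\mathrm{loc}}(Q_T) \cap C^{1,\alpha}(\overline Q_T)$ bounds on $\hat u^\delta$ and, via the exponential formula, uniform $C^{1,\alpha}_{\mathrm{loc}}(Q_T) \cap C^{0,\alpha}(\overline Q_T)$ bounds on $m^\delta$. Ascoli-Arzel\`a extracts a subsequence converging to a pair $(u,m)$ for which the two interior equations of \eqref{opsys} pass classically to the limit.

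The main obstacle is to recover the prescribed marginals $m(0)=m_0, m(T)=m_1$. For this I would exploit the energy identity \eqref{um0sot0} of Lemma \ref{stime}: the left-hand side $\int_M m_0\, \hat u^\delta(0)\, dx$ is bounded by $K$, the kinetic-entropic integral on the right-hand side is uniformly bounded below (by $-\log\int e^{-V}dx$ using Jensen), and the two penalty terms $\frac{\vep}{\delta}\int (m_0 - m^\delta(0))[\log m_0 - \log m^\delta(0)]\,dx$ and $\frac{\vep}{\delta}\int (m^\delta(T) - m_1)[\log m^\delta(T) - \log m_1]\,dx$ are both nonnegative. Hence each penalty term is uniformly bounded in $\delta$; combining the elementary inequality $(a-b)(\log a - \log b) \geq (a-b)^2/\max(a,b)$ with the uniform upper bound $m^\delta \leq C$ yields $\|m^\delta(0) - m_0\|_{L^2}^2 + \|m^\delta(T) - m_1\|_{L^2}^2 = O(\delta)$, so the $C^{0,\alpha}(\overline Q_T)$ limit $m$ satisfies $m(0) = m_0, m(T) = m_1$. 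The normalization $\int_M u(T) m_1 = 0$ passes to the limit trivially.

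With a smooth solution $(u,m)$ of \eqref{opsys} with $m > 0$ in hand, Lemma \ref{itsmin} identifies $(m,\nabla u)$ as a minimizer of $\mathcal{F}_\epsilon$. Uniqueness follows from joint strict convexity of the Benamou-Brenier integrand $\Psi(p,m) = |p|^2/(2m)$ and strict convexity of $m \mapsto m\log m$: any two minimizers must coincide as densities, then as momenta, and so define the same velocity field $\nabla u$; the potential $u$ is then pinned uniquely by its Hamilton-Jacobi equation together with the prescribed normalization. Finally, the higher regularity under $V \in C^{k,\alpha}(M)$ is obtained by a standard Schauder bootstrap on \eqref{elliptic:short}, which is a nondegenerate elliptic equation with $C^{k,\alpha}$ structure once $\overline\nabla u$ is bounded, yielding $u \in C^{k+1,\alpha}$ and then $m \in C^{k,\alpha}$ through its explicit expression in $u$.
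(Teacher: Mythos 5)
Your proposal follows essentially the same route as the paper: pass to the limit $\delta\downarrow 0$ in the penalized system \eqref{mfg-de}, using the uniform bound \eqref{bound:unif_u:pos_marg}, the Lipschitz estimate of Theorem \ref{thm:Lip_u_par}, Schauder compactness, and the identity \eqref{um0sot0} to recover the marginals, then Lemma \ref{itsmin} for minimality. The only point to tidy up is that Theorem \ref{teo-delta} is stated for $m_0,m_1\in C^{1,\alpha}(M)$, so for merely $W^{1,\infty}$ positive marginals you must first run the argument for smooth positive approximations and then pass to the limit once more, which works because all your constants depend only on $\|\log m_i\|_{W^{1,\infty}}$ (this is exactly the closing remark in the paper's proof).
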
 

\begin{proof} In a first step, we take  $m_0, m_1\in C^{1,\alpha}(M)$ and positive on $M$. By Theorem \ref{teo-delta}, problem \rife{mfg-de} admits a smooth solution  $(u^\de, m^\de)$, and we set as before  $\hat u^\de= u^\de- \into u^\de(T)\,m_1$. By \rife{bound:unif_u:pos_marg}, we have that $\hat u^\de$ is uniformly bounded in $Q_T$. Then, by Theorem \ref{thm:Lip_u_par}, we deduce that $\hat u^\de$ is uniformly bounded in Lipschitz norm (time-space). By elliptic estimates (same as in Lemma \ref{lemma:E_bounded}), we have that $\hat u^\de$ is bounded in $C^{1,\alpha}(\overline Q_T)$ and the bound only depends on $\| \log(m_0)\|_{W^{1,\infty}(M)},   \| \log(m_1)\|_{W^{1,\infty}(M)}$ (and of course on $\vep, \|V\|_{W^{2,\infty}}$). We also have interior local bounds on $\hat u^\de$ in $C^{2,\alpha}$, because the elliptic equation have coefficients bounded in $C^{0,\alpha}$. Therefore, by compactness, we deduce that $\hat u^\de$ converges to some $u\in C^{2,\alpha}(Q_T)\cap C^{1,\alpha}(\overline Q_T)$, which is a classical solution of the elliptic equation \rife{elliptic:short}. At the boundary, e.g. at $t=T$, we have
\be\label{bunde}
\begin{split}
 -\partial_t \hat u^\de & +\frac{1}{2}\abs{\nabla \hat u^\de}^2= \epsilon( \log(m_1)+V(x) ) + \delta \hat u^\de + \de \into u^\de(T)m_1
 \\
 & = \epsilon( \log(m_1)+V(x) ) + \delta \hat u^\de + \vep \into (\log(m^\de(T))- \log(m_1)) m_1\,.
\end{split}
\ee
However, by \rife{um0sot0} and the bound on $\hat u^\de$, we know that
$$
\into (\log(m^\de(T))- \log(m_1)) (m^\de(T)-m_1) \leq  C\, \de \,\, \mathop{\to}^{\de\to 0} 0 
$$
which implies that $m^\de(T) \to m_1$. In particular, last term in \rife{bunde} vanishes as $\de\to 0$, and since we also have $\de\hat u^\de \to 0$ we conclude that $u$ satisfies the boundary condition  at $t=T$
$$
 -\partial_t u   +\frac{1}{2}\abs{\nabla u}^2 =\epsilon( \log(m_1)+V(x) )\,.
$$
We argue in the same way for $t=0$ and we conclude that the limit $u$ satisfies the elliptic problem \rife{elliptic:2nd_order:par} with $\rho=0, \de=0$. Furthermore, we notice that $u$ satisfies an elliptic equation where the second order coefficients only depend on  $\nabla u$, and the first order coefficients depend on $\nabla V$. Hence, by the interior Schauder regularity and a boostrap argument, we have $u\in C^{k+1, \alpha}(M)$ provided $V\in C^{k,\alpha}(M)$, $k\geq 1$.

Finally, defining 
$$
m:= e^{-V(x)}\, \exp\left(\frac{-\partial_t u + \frac12 |\nabla u|^2}\vep\right)\,,
$$
we have $m\in C^{1,\alpha}(Q_T)\cap C^{0,\alpha}(\overline Q_T)$, $m>0$ and $m(0)=m_0, m(T)=m_1$. In other words, $(u,m)$ is a smooth solution of system \rife{opsys}
(unique, with the normalization of $u$), and is the  minimum of the functional $\cF_\vep$. This is also the unique minimum, as we will prove in more generality in our next results. As a last remark, the result extends by approximation to positive marginals $m_0, m_1\in W^{1,\infty}(M)$ since  in fact all the estimates above remain true.
\end{proof}

We will now extend the existence result to the case of general $L^1$ marginals. We will need some more a priori estimates and new compactness arguments. To this purpose, we first  make use  of the displacement convexity estimates  of Proposition \ref{prop:conv} to derive a local $L^2$-bound on $\nabla m$.

 \begin{lemma}\label{lemma:bound:nabla_m}
Let $(u,m)$ be a smooth solution of the system \rife{mfg-log}. For every $0<a<b<T$ there exist a constant $C=C( M,d, \lambda, T, b-a, \norma{V}_{W^{1,\infty}})$ such that
\begin{align*}
    \int_a^b\!\!\int_ M \abs{\nabla \sqrt{m}}^2\,dxdt & \le   C\, \frac{|\log \vep |}{\vep}\,.
\end{align*}
 \end{lemma}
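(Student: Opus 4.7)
My plan is to pass through the displacement convexity identity from Proposition \ref{prop:conv} applied with $U(r)=r\log r-r$, namely equation \rife{Plog}:
\begin{equation*}
\varphi''(t) \;\geq\; \int_M m\,\mathrm{Ric}_g(\nabla u,\nabla u)\,dx \;+\; \vep\int_M \frac{|\nabla m|^2}{m}\,dx \;+\; \vep \int_M \nabla m\scalg \nabla V\,dx,
\end{equation*}
where $\varphi(t):=\int_M m\log m\,dx$. Using the Ricci lower bound \rife{ricci} and Cauchy--Schwarz on the $\nabla V$ term (absorbing half of the Fisher information), I rewrite this as
\begin{equation*}
\frac{\vep}{2}\int_M\frac{|\nabla m|^2}{m}\,dx \;\leq\; \varphi''(t)+\lambda\int_M m|\nabla u|^2\,dx+C\vep\|\nabla V\|_\infty^2.
\end{equation*}
Integrating in $t\in(a,b)$ reduces the problem to upper-bounding the two terms $\varphi'(b)-\varphi'(a)$ and $\int_a^b\!\int_M m|\nabla u|^2\,dxdt$.

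For the kinetic term, I invoke the Hamiltonian identity of Lemma \ref{HS}, which gives pointwise in $t$
\begin{equation*}
\tfrac12\int_M m|\nabla u|^2\,dx \;=\; E(m_0,m_1)+\vep\int_M m(\log m+V)\,dx \;\leq\; E(m_0,m_1)+\vep\,\varphi(t)+\vep\|V\|_\infty,
\end{equation*}
and then I use the universal bound \rife{stimaE} on $E$ together with the local entropy estimate \rife{loc_bound_entropy} for $\varphi(t)$ on $(a,b)$. This yields $\int_a^b\!\int_M m|\nabla u|^2\,dxdt\leq C$, with constants depending only on the allowed data (in particular the $\vep|\log\vep|$ factor produced by \rife{loc_bound_entropy} is harmless since multiplied by $\vep$).

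The genuinely delicate step is to control $\varphi'(b)-\varphi'(a)$ without any pointwise information on $\varphi'$ at the boundary. Here I exploit the $\Lambda_\vep$-semiconvexity of $\varphi$ stated in Corollary \ref{corollary:semi_conv:log}(i): the function $\psi(t):=\varphi(t)+\tfrac{\Lambda_\vep}{2}t^2$ is convex on $(0,T)$, and $\Lambda_\vep$ is bounded uniformly in $\vep\leq 1$. Convexity of $\psi$ gives, for any $\alpha\in(0,a)$ and $\beta\in(b,T)$,
\begin{equation*}
\psi'(b)-\psi'(a) \;\leq\; \frac{\psi(\beta)-\psi(b)}{\beta-b}+\frac{\psi(\alpha)-\psi(a)}{a-\alpha}.
\end{equation*}
Choosing $\alpha=a/2$ and $\beta=(T+b)/2$, the right-hand side is bounded by the pointwise values $|\psi(\alpha)|,|\psi(a)|,|\psi(b)|,|\psi(\beta)|$, each of which is in turn controlled by the local entropy bound \rife{loc_bound_entropy} together with the universal Jensen lower bound $\varphi\geq-\log\mathrm{Vol}(M)$. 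Since \rife{loc_bound_entropy} contributes a term of order $|\log\vep|$, this produces
\begin{equation*}
\varphi'(b)-\varphi'(a)=\psi'(b)-\psi'(a)-\Lambda_\vep(b-a)\;\leq\; C(1+|\log\vep|),
\end{equation*}
with $C$ depending on $M,d,\lambda,T,b-a,\|V\|_{W^{1,\infty}}$. Putting everything back in the integrated inequality yields
$$\tfrac{\vep}{2}\int_a^b\!\!\int_M\tfrac{|\nabla m|^2}{m}\,dxdt\leq C(1+|\log\vep|),$$
and the conclusion follows upon recalling $\int_M|\nabla\sqrt{m}|^2\,dx=\tfrac14\int_M\tfrac{|\nabla m|^2}{m}\,dx$. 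The main obstacle is precisely this last semiconvexity-based control of the boundary values $\varphi'(a),\varphi'(b)$: a direct integration by parts is not available, and one is forced to trade the unknown derivative at the boundary of $(a,b)$ for values of $\varphi$ slightly outside, where only the singular (in $\vep$) local bound \rife{loc_bound_entropy} is at one's disposal; this is where the $|\log\vep|/\vep$ scaling enters and cannot be improved by this method.
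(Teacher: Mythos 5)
Your proof is correct, and at the one delicate point it uses a genuinely different device from the paper's. Both arguments start from the same differential inequality (\rife{Plog}, i.e.\ \rife{serve}), bound the kinetic term through the conserved Hamiltonian $E(m_0,m_1)$ and \rife{stimaE}, and ultimately charge the $|\log\vep|$ factor to the interior entropy bound \rife{loc_bound_entropy}. The difference lies in how $\int_a^b\varphi''\,dt$ is handled. The paper never integrates over $(a,b)$ directly: it multiplies the inequality by a cut-off $\xi^2(t)$ compactly supported in $(0,T)$ and integrates over the whole interval, so that the left-hand side vanishes and the would-be boundary terms $\varphi'(a),\varphi'(b)$ are traded, after an integration by parts of the $4\xi\xi'\varphi'$ term, for terms of the form $\int(\xi'^2+|\xi\xi''|)\,|\varphi|\,dt$, again controlled by \rife{loc_bound_entropy} and Jensen's lower bound $\varphi\ge-\log \mathrm{Vol}(M)$. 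You instead integrate directly on $(a,b)$ and control $\varphi'(b)-\varphi'(a)$ via the $\Lambda_\vep$-semiconvexity of Corollary \ref{corollary:semi_conv:log}(i) together with secant-slope comparisons at the interior points $a/2$ and $(T+b)/2$; the uniform bound on $\Lambda_\vep$ for $\vep\le1$ makes the quadratic correction harmless, and the same two ingredients (\rife{loc_bound_entropy} above, Jensen below) bound the four point values of $\psi$. The two routes are of comparable length and give the same constants (which, in both versions, really depend on $a$ and $T-b$ separately rather than only on $b-a$, a harmless imprecision already present in the statement). Your version makes explicit where the time-derivative of the entropy is being paid for, at the price of invoking the semiconvexity corollary, which the cut-off argument avoids.
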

 \begin{proof}
     First, we recall inequality \rife{serve} { which implies
          \begin{align*}
  \frac{d^2}{dt^2}\int_ M  m\log m &  \geq  -2\lambda\epsilon\int_ M  m\log m  dx
    +\frac\vep 2 \int_ M  \frac{1}{m}\abs{\nabla m}^2\,dx- C  
    \end{align*}
    for some $C$ depending on $M,d,\lambda, T, \|V\|_{W^{1,\infty}}$, and independent of $\vep \leq 1$.}
%      again use Proposition \ref{prop:conv}  with $U(r)=r\log r -r$ (so that $P(r)=r$) and we get
%     \begin{equation*}
%\begin{aligned}
%    \frac{d^2}{dt^2}\int_ M  m\log m-m\,dx\ge&\int_ M  mRicc_g(\nabla u,\nabla u)\,dx+\\
%    &+\int_ M  \epsilon\frac{1}{m}\abs{\nabla m}^2\,dx-\int_ M  m\Delta V\,dx.
%\end{aligned}\end{equation*}
Now we fix $t_0\in(0,T)$, and $R<R_0\coloneqq\min(t_0,T-t_0)$; then, for $\tau\in(0,R)$ we let $\xi(t)$ be a smooth cut-off function such that
\begin{equation*}
\begin{cases}
    \xi(t)=1 &\text{if }t\in(t_0-\tau,t_0+\tau)\\
    \xi(t)=0  &\text{if }\abs{t-t_0}>R\\
    \abs{\xi'(t)}^2+\abs{\xi''(t)}\le \alpha_\xi
\end{cases}
\end{equation*}
for a certain $\alpha_\xi>0$. Then we have
\begin{align*}
    \frac{d^2}{dt^2}\left(\xi^2\int_ M  m\log m \,dx\right)   \ge& -2\lambda\epsilon\xi^2\int_ M  m\log m  \,dx 
     + \frac\vep 2\int_ M   \xi^2\frac{ \abs{\nabla m}^2}m \, dx - C\, \xi^2   \\
    & +4\xi\xi'\frac{d}{dt}\int_ M  m\log m \,dx+2(\xi'^2+\xi\xi'')\int_ M  m\log m \,dx \,.   
 \end{align*} 
%
%
%Then we have, using the lower bound on the Ricci curvature 
%\begin{align*}
%    \frac{d^2}{dt^2}\left(\xi^2\int_ M  m\log m-m\,dx\right)\ge&-\lambda\int_ M  \xi^2m\abs{\nabla u}^2\,dx+\int_ M  \epsilon \xi^2\frac{1}{m}\abs{\nabla m}^2\,dx\\
%    &-\int_ M  \xi^2m\Delta V\,dx+4\xi\xi'\frac{d}{dt}\int_ M  m\log m-m\,dx+\\
%    &+2(\xi'^2+\xi\xi'')\int_ M  m\log m-m\,dx
%    \intertext{by the independence from time of the quantity $E(m_0,m_1)=\int_ M \frac{1}{2}m\abs{\nabla u}^2-\epsilon m\log m -m(\epsilon+V)dx$ we get}
%    \ge&-\lambda\xi^2E(m_0,m_1)+\lambda\epsilon\xi^2\int_ M  m\log m -m\,dx+\\
%    &+\lambda\xi^2\int_ M  mV\,dx+\int_ M   \xi^2\frac{\epsilon}{m}\abs{\nabla m}^2\,dx\\
%    &-\int_ M  \xi^2m\Delta V\,dx+4\xi\xi'\frac{d}{dt}\int_ M  m\log m-m\,dx+\\
%    &+2(\xi'^2+\xi\xi'')\int_ M  m\log m-m\,dx    
% \end{align*} 
If we integrate in $(0,T)$ both sides   we get
\begin{align*}
    \vep \iinto  \xi^2 \abs{\nabla \sqrt{m}}^2\,dxdt\le\,& C_{t_0,T} \left[ \iinto m\log m \,dxdt + C\right], 
%    \\
%    &-4\int_0^T\xi\xi'\frac{d}{dt}\int_ M  m\log m-m\,dxdt\\
%    \le\,&\lambda E(m_0,m_1)\int_0^T\xi^2\,dt+\iint_Q\xi^2m(\Delta_g V-\lambda V)\,dx+\\
%    &+\int_0^T(2\xi'^2+2\xi\xi''-\lambda\epsilon\xi^2)\int_ M  m\log m-m\,dxdt
\end{align*} 
for a possibly different constant $C$. Since $\iinto m\log(m)$ is estimated by \rife{loc_bound_entropy}, we conclude.
%So, by \ref{corollary:semi_conv:log}, we find that there exists a constant $C=C(\alpha_\xi,\lambda,\epsilon,T)$ such that
%\begin{align*}
%    \int_{t_o-\tau}^{t_0+\tau}\!\!\int_ M \abs{\nabla \sqrt{m}}^2\,dxdt\le& C(\int_ M  m_0\log m_0\,dx+\int_ M  m_1\log m_1\,dx\\
%    &+E(m_0,m_1)+\norma{V}_{W^{2,\infty}}).
%\end{align*}
%
 \end{proof}

We will also use the following stability result.

\begin{lemma}\label{stab-trace}  Let $(u^n,m^n)$ be a sequence of smooth solutions of \rife{mfg-log}, possibly for different parameters $\vep_n\to\vep \geq 0$. Assume that $u_n$ satisfies \rife{int-unif-bound-u} for some constant $\hat C$ independent of $n$. Let $(u,m)$ be  such that 
$u^n\to u$ weakly in $L^2((a,b);H^1( M ))$,  for any $0<a<b< T$, and $m^n\to m$ weakly  in $L^1(Q_T)$.  Then we have:
\begin{enumerate}

\item  $u$ satisfies, in the sense of distributions,
\be\label{HJ-relax}
-\partial_t u  + \frac12 |\nabla u|^2 \leq \vep (\log m + V) \qquad \hbox{in $(0,T)\times  M $.}
\ee
\item For every sequences $m_{0n}, m_{1n}$ such that $m_{0n}\to m_0$, $m_{1n}\to m_1$ strongly in $L^1( M )$, we have
$$
\limsup_{n\to \infty} \into u^n(0)m_{0n} \leq \into u(0)dm_0 \,,
$$
$$
 \liminf_{n\to \infty} \into u^n(T)m_{1n} \geq \into u(T)dm_1\,.
$$
\item For every $(\mu, v)$ which solves \rife{conteq} and such that $\mu(t) \in L^1(M)$ for every $t$ and $\mu\log \mu \in L^1(Q_T)$, it holds
\be\label{vecchia-OPS}
\begin{split}
 \into  u(s)\mu(s) \, dx    - \into u (t)\, \mu(t)\, dx  & \leq  \int_s^t \!\!\! \into  [\mu\, v\, \scalg \nabla u  -  \frac12  |\nabla u  |^2\, \mu]\, dxd\tau
 %\frac12\int_0^t \!\!\! \into  \mu\, |v|^2\, dxd\tau
 \\ & \quad   + \vep\int_s^t \!\!\! \into  \left(    \log m    +   V \right) \mu   dxd\tau
\end{split}
\ee
for every $0\leq s<t\leq T$.
\end{enumerate}
\end{lemma}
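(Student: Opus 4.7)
My plan for the three parts is to pass to the limit in the classical identities satisfied by the smooth approximants $(u^n,m^n)$, exploiting convexity/concavity and the compactness already established.

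\textbf{Part (i).} Fix $\phi\in C_c^\infty(Q_T)$ with $\phi\ge0$. Since each $u^n$ is a classical solution,
\[
\iinto \phi\bigl(-\partial_tu^n+\tfrac12|\nabla u^n|^2\bigr)dxdt=\iinto \phi\,\vep_n(\log m^n+V)dxdt.
\]
I would take $\liminf$ on the left and $\limsup$ on the right. Integration by parts in $\partial_tu^n$ together with weak $L^2_{loc}$-convergence of $u^n$ passes the linear term. Convexity plus weak $L^2_{loc}$-convergence of $\nabla u^n$ gives $\iinto \phi|\nabla u|^2\le\liminf\iinto \phi|\nabla u^n|^2$. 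On the right, the functional $m\mapsto\iinto \phi\log m$ is concave on $L^1_+$ and hence weakly upper semicontinuous (via Mazur), so $\limsup\iinto \phi\vep_n\log m^n\le\iinto \phi\vep\log m$; the local bounds on $\log m^n$ from \rife{acca-loc}, together with $\log m\le m/e$, guarantee integrability throughout.

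\textbf{Part (ii).} Using $\partial_tu^n=\tfrac12|\nabla u^n|^2-\vep_n(\log m^n+V)$ and integrating against the $t$-independent density $m_{0n}$ over $(0,t_0)\times M$ yields
\[
\into u^n(0)m_{0n}\,dx\le\into u^n(t_0)m_{0n}\,dx+\int_0^{t_0}\!\!\into \vep_n(\log m^n+V)m_{0n}\,dxdt.
\]
The uniform bound of $u^n$ in $W^{1,1}_{loc}((0,T);L^1(M))$ from \rife{acca-loc} yields, along a subsequence, $u^n(t_0)\to u(t_0)$ strongly in $L^1(M)$ for every $t_0\in(0,T)$; combined with $m_{0n}\to m_0$ in $L^1$, the first right-hand term converges to $\into u(t_0)m_0\,dx$. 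For the integral term I would use the elementary inequality $m_{0n}\log m^n\le m_{0n}(\log m_{0n})_++m^n/e$, which controls the $\log$-factor even when $m_0$ has infinite entropy, and then apply USC of the concave $\log$-functional. Letting $t_0\to0^+$, part (i) implies that $t\mapsto\into u(t)m_0\,dx-\int_0^t\!\!\into\vep(\log m+V)m_0\,dxd\tau$ is monotone, so $\into u(t_0)m_0\,dx\to\into u(0)m_0\,dx$ by monotone convergence. The bound at $t=T$ follows by the symmetric argument running backwards in time.

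\textbf{Part (iii).} For $0<s<t<T$, the classical HJ equation for $u^n$ combined with the continuity equation for $\mu$ gives
\begin{equation*}
\begin{aligned}
\into u^n(s)\mu(s)\,dx-\into u^n(t)\mu(t)\,dx &= \int_s^t\!\!\into\bigl(v\scalg\nabla u^n-\tfrac12|\nabla u^n|^2\bigr)\mu\,dxd\tau \\
&\quad +\vep_n\int_s^t\!\!\into(\log m^n+V)\mu\,dxd\tau.
\end{aligned}
\end{equation*}
I would rewrite $v\scalg\nabla u^n-\tfrac12|\nabla u^n|^2=\tfrac12|v|^2-\tfrac12|v-\nabla u^n|^2$ and take $\liminf_n$. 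The LHS converges as in part (ii); the $\log$-term is handled by USC of $m\mapsto\int\mu\log m$. For the quadratic remainder I truncate $\mu_k:=\mu\wedge k$, so that the LSC
\[
\int_s^t\!\!\into|v-\nabla u|^2\mu_k\,dxd\tau\le\liminf_n\int_s^t\!\!\into|v-\nabla u^n|^2\mu_k\,dxd\tau
\]
follows directly from weak $L^2_{loc}$-convergence of $\nabla u^n$ and $\mu_k\in L^\infty$; then $k\to\infty$ by Fatou, exploiting $\mu|v|^2,\mu\log\mu\in L^1$. Finally, one extends to $s=0$, $t=T$ via the trace definitions and part (ii). The main technical obstacle is precisely this $\mu$-weighted LSC for the quadratic term: neither strong $L^2(\mu)$-convergence nor uniform $\mu$-weighted bounds on $\nabla u^n$ are available, so the truncation plus Fatou step must be matched carefully against the summability of $(\mu,v)$ and $m$.
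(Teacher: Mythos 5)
Your Part (i) is essentially the paper's argument (weak lower semicontinuity of $|p|^2$ and of $-\log m$, which is the same as weak upper semicontinuity of the concave log), and is fine. The genuine gaps are in Parts (ii) and (iii), and they concern the fact that you test the equation directly against $m_{0n}$ (respectively $\mu$) without first truncating $u^n$. The crucial device in the paper is the truncation $u_{n,k}=\max\{u_n,-k\}$: it converts the right-hand side of the subsolution inequality into $\1_{\{u_n\ge -k\}}\vep_n(\log m_n+V)\leq \vep_n(m_n+|V|)$, which is uniformly in $L^1(Q_T)$ and can be integrated against a cutoff $\xi_k$ times a bounded test function $\vfi\in L^\infty(M)$, producing an error of size $O(\|\vfi\|_\infty/k)$ that vanishes. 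Your argument instead keeps the untruncated $\log m^n$ and tests against $m_{0n}$, which is only $L^1(M)$; the term $\int_0^{t_0}\!\!\int_M \vep_n\,m_{0n}\log m^n$ is then not under control. Your bound $m_{0n}\log m^n\le m_{0n}(\log m_{0n})_+ + m^n/e$ requires a uniform entropy bound on $m_{0n}$, which is not part of the hypotheses and indeed fails in the application (the heat-flow approximants have entropy of order $\log n$ when $m_0\notin L\log L$). Even setting that aside, the term $\int m^n m_{0n}$ obtained from $a\log b\le ab-a$ is an unbounded product of two $L^1$-quantities.

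A second, related difficulty is your passage $t_0\to 0$: you rely on the monotonicity of $t\mapsto u(t)+\int_0^t\vep(\log m+V)\,ds$, but $\log m$ need not be $L^1$ near $t=0$ so this primitive is not absolutely continuous. The paper deliberately uses the primitive $F(t,x)=\vep\int_0^t(m(s,x)+V(x))ds$ of an $L^1(Q_T)$ function (again a consequence of the truncation), which tends to $0$ in $L^1(M)$ as $t\to 0$ and makes the argument close. The extension to $j\to\infty$ is then done by truncating the limit marginal as $T_j(m_0)$, keeping the test function in $L^\infty$ throughout. These same omissions propagate to Part (iii): your step ``one extends to $s=0,t=T$ via the trace definitions and part (ii)'' glosses over exactly the place where the paper has to combine the $u_{n,k}$-truncation with Steklov time-averages $(u_{n,k})_h$ and several applications of Fatou. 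Your truncation $\mu_k=\mu\wedge k$ for the quadratic term in the interior is a reasonable alternative to the paper's Fatou argument applied to $(u_k)_h$, but it does not repair the endpoint issue or the control of the $\vep_n\log m^n$ term when it is paired with $\mu$ near $t=0,T$.
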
 

\begin{remark}\rm  We notice that \rife{HJ-relax} implies 
$$
-\partial_t u  + \frac12 |\nabla u|^2 \leq \vep ( m + V) \,\,\, \in L^1(Q_T)
$$
hence $u$ satisfies \rife{subsol} for some $\alpha\in L^1(Q_T)$ and admits one-sided traces up to $t=0$ and $t=T$ (those traces are used in items 2,3 above).
\end{remark}

\begin{proof} { By definition, $u^n$ satisfies
$$
 \iinto u_n\, \partial_t\vfi  \, dxdt+ \frac12 \iinto |\nabla u^n|^2\, \vfi\, dxdt= \iinto \vep (\log (m^n)+ V)\vfi\,dxdt
$$
for every $\vfi\in C^{1,0}_c( Q_T )$. In particular the integrals are restricted to some interval $(a,b)$ containing the support of $\vfi$, with $0<a<b<T$. 
Then, by simply using the weak convergence of $\nabla u_n$ in $L^2((a,b)\times  M)$ and $m^n$ in $L^1 ((a,b)\times  M)$, and the weak lower semicontinuity for  the convex functions $|p|^2$ and $-\log(m)$, respectively, we conclude that $u$ satisfies \rife{HJ-relax}, in distributional sense.}

To observe the relaxation on the initial traces, we fix $k\in\N$ and consider the truncations $u_{n,k}\coloneqq\max\{u_n,-k\}$. By a standard argument in Sobolev spaces (see e.g. Lemma 5.3 in \cite{ORRIERI20191868}), we have that  $u_{n,k}:=\max\{u_n,-k\} $ are Lipschitz functions that satisfy 
\be\label{weak:thm:trunc:1}
\begin{split}
-\partial_t u_{n,k}+ \frac12 |\nabla u_{n,k} |^2  & \leq \1_{\{u_n\ge-k\}}\epsilon_n \left(   \log m_n  +   V \right) \\
& \leq \vep_n\, (m_n+ |V|)\,.
\end{split}
\ee
%\begin{equation}
%\begin{aligned}
%\label{weak:thm:trunc:1}
%        \iint_Q -\partial_t u_{n,k}\phi+\frac{1}{2}\abs{\nabla u_{n,k}}^2\phi\,dxdt&\le\iint_Q \1_{\{u_n\ge-k\}}\epsilon_n \left(  \log m_n  +   V(x)\right)\phi\,dxdt\\
%        &\le\iint_Q \epsilon_n (m_n  +|V|)\phi\,dxdt
%\end{aligned}    
%\end{equation}
%    for every positive  $\phi\in L^{\infty}(Q)$. 
 For every $k\in \N$, let $\xi_{ k}$ be the piecewise linear function  such that
    \begin{equation*}
    \begin{cases}
        \xi_{ k}(s)=1&\forall s\in\left[0,\frac{\hat{C}}{k}\right]\\
        \xi_{ k}(s)=0&\forall s\in\left[2\frac{\hat{C}}{k} ,T\right]\\
        \xi_{\delta,k}'(s)=-\frac{k}{\hat C}&\forall s\in\left[\frac{\hat{C}}{k} , 2\frac{\hat{C}}{k} \right]\\        
        %0\ge\xi_{\delta,k}'(s)\ge-\frac{1}{\delta}&\forall s\in[0,T]
    \end{cases}\end{equation*}
    where $\hat{C}$ is the same constant which appears in \eqref{int-unif-bound-u}. We now fix a positive $\varphi\in L^\infty( M )$ and we  multiply  \eqref{weak:thm:trunc:1} 
    by $\phi=\xi_{ k} \varphi$.
%    \begin{equation*}
%        \iint_Q -\partial_tu_{n,k} \varphi\xi_{ k}\le\iint_Q \xi_{ k} (\1_{\{m_n\ge1\}}\epsilon_n \log m_n  + \1_{\{V\ge0\}} V)\varphi \,dxdt
%    \end{equation*}
%    which implies
Integrating by parts the time derivative we get
    \begin{align*}
        \int_ M  u_{n,k}(0,\cdot) \varphi dx- \frac k{\hat C} \int_{\frac{\hat{C}}{k}}^{2\frac{\hat{C}}{k} }\!\!\!\!\int_ M  u_{n,k} \varphi\,dxdt&\le C\, \frac{\|\vfi\|_\infty}k\,. 
%        \begin{aligned}[t]
%            \iint_Q &\xi_{\delta,k}\1_{\{m_n\ge1\}}\epsilon_n\varphi\log m_n  \\
%        &+ \xi_{\delta,k}\1_{\{V\ge0\}}\varphi V\,dxdt
%        \end{aligned}\\
%        &\le\begin{aligned}[t]
           %\int_0^{2\frac{\hat{C}}{k}}\!\!\!\!\int_ M  \varphi \left(\epsilon_n m_n+\abs{V}\right)\,dxdt.
        %\end{aligned}
    \end{align*}
    The sequence $u_{n,k}(0,\cdot)$ is uniformly bounded in $n$ by construction, let   $\chi_k$  be its weak-* limit, up to subsequences, in $L^\infty( M )$. By \eqref{int-unif-bound-u}, we note that $u_{n,k}=u_n$ for $t\ge \frac{\hat{C}}{k}$.  Then we pass to the limit for $n\rightarrow+\infty$, using  the weak convergence of $u_n$, and we get 
    \be\label{pre-relax}
        \int_ M  \chi_{k} \, \varphi dx- \frac k{\hat C} \int_{\frac{\hat{C}}{k}}^{2\frac{\hat{C}}{k} }\!\!\!\!\int_ M  u \, \varphi\,dxdt\le C\, \frac{\|\vfi\|_\infty}k.  
\ee
  From equation \rife{HJ-relax}, setting $F(t,x):=  \vep \int_0^t   (m(s,x)+V(x))\, ds$, we know that $u+F$ is nondecreasing in time. Hence
$$
\frac k{\hat C} \int_{\frac{\hat{C}}{k}}^{2\frac{\hat{C}}{k} }\!\!\!\!\int_ M  u \, \varphi\,dxdt \leq \int_M u( 2\frac{\hat{C}}{k}, \cdot)\vfi\, dx -    \frac k{\hat C} \int_{\frac{\hat{C}}{k}}^{2\frac{\hat{C}}{k} }\!\!\!\!\int_ M \left( F(t,x)- F(2\frac{\hat{C}}{k}, x)\right)\vfi\,dxdt
$$
and last term vanishes as $k\to \infty$ because $F$ is a primitive of a $L^1$ function. Therefore, we have
$$
\limsup_{k\to \infty}\frac k{\hat C} \int_{\frac{\hat{C}}{k}}^{2\frac{\hat{C}}{k} }\!\!\!\!\int_ M  u \, \varphi\,dxdt  \leq 
\limsup_{k\to \infty} \int_M u( 2\frac{\hat{C}}{k}, \cdot)\vfi\, dx \leq  \int_M u(0)\,\vfi\, dx
$$
where we used the pointwise convergence of $u(t, \cdot)$ as $t\to 0^+$ and the fact that $u$ is bounded above by \rife{int-unif-bound-u}, which allows us to apply Fatou's lemma. Finally, letting $k\to \infty$ in \rife{pre-relax}, we obtain 
\be\label{chik}
\limsup_{k\to \infty}  \int_ M  \chi_{k}  \varphi dx \leq 
\int_M u(0)\,\vfi\, dx
\ee
for every $\vfi\in L^\infty(M)$. Let us define now $T_j(f)= \min(f,j)$ the truncation operator in $L^1(M)$. We recall that $m_{0,n}$ converges strongly to $m_0$ in $L^1( M )$ and $u(0)$ is bounded above, so 
    \begin{align*}
        \limsup_n\int_ M  m_{0,n}u_n(0)dx & \leq    \limsup_n\int_ M  T_j(m_{0,n}) u_n(0)dx  \\ & \quad +  \limsup_n\int_ M  ( m_{0,n}-T_j(m_{0,n}))  u_n(0)dx \\
        & \le\limsup_n\int_ M  T_j(m_{0,n})u_{n,k}(0)dx  \\ & \quad + \frac {\hat C}T  \limsup_n\int_ M  (m_{0,n}-T_j(m_{0,n})) dx\\
        &\le\int_ M  T_j(m_{0})\chi_kdx + \frac {\hat C}T\int_ M  (m_{ 0}-T_j(m_{0})) dx\,.
        %\le\int_ M  m_{0}u(\frac{\hat{C}}{k},\cdot)dx\\
    \end{align*}
Using \rife{chik} with $\vfi= T_j(m_0)$ we obtain, letting $k\to \infty$,
$$
\limsup_n\int_ M  m_{0,n}u_n(0)dx\leq \int_M u(0)\,T_j(m_0)\, dx +  \frac {\hat C}T\int_ M  (m_{ 0}-T_j(m_{0})) dx\,.
$$
Letting finally $j\to \infty$,  we get
$$
\limsup_n\int_ M  m_{0,n}u_n(0)dx\leq \int_M u(0)\, m_0 \, dx \,.
$$
Similarly we argue for $t=T$, using now $u_{n,k}= \min(u_n, k)$. 

We are left to prove \rife{vecchia-OPS}.  To this purpose, for any $f\in L^1(Q_T)$, we denote by $f_h, f_{-h}$ the time-average functions $f_h:= \frac1h\int_t^{t+h} f(x,s)ds$ and $f_{-h}:= \frac1h\int_{t-h}^t f(x,s)ds$.  
%We start by recalling that, for any $k>0$, the truncations $u_{n,k}:=\max\{u_n,-k\} $ are Lipschitz functions that satisfy 
%$$
%-\partial_t u_{n,k}+ \frac12 |\nabla u_{n,k} |^2 \leq \1_{\{u_n\ge-k\}}\epsilon \left(   \log m_n  +   V \right)\,.
%$$
Integrating \rife{weak:thm:trunc:1} in $(t,t+h)$ and dividing by $h$, we obtain, by means of Jensen inequality,
$$
-\partial_t (u_{n,k})_h+ \frac12 |\nabla (u_{n,k})_h |^2 \leq \frac1h \int_t^{t+h} \left\{\1_{\{u_{n}\ge-k\}} \epsilon_n\left(   \log m_n  +   V \right)\right\}ds\,.
$$
Let $(\mu,v)$ be any solution of the continuity equation such that $\mu(t) \in L^1(M)$ for every $t$, and $\mu\log(\mu) \in L^1(Q_T)$. By a density argument, any Lipschitz function $\vfi$ can be used as test function in the continuity equation; hence, multiplying by $u_{n,k}$, we get (for any $0\leq s< r<T-h$)
\be\label{deh}\begin{split}
 & \into (u_{n,k})_h(s)\mu(s) \, dx    - \into (u_{n,k})_h(r)\, \mu(r)\, dx  \\ &  \quad \qquad 
\leq  \int_s^r\!\!\! \into  \mu\, v\, \scalg \nabla (u_{n,k})_h -  \frac12  |\nabla (u_{n,k})_h |^2\, \mu\, dxd\tau
\\ &  \quad \qquad  \quad + \vep_n \int_s^r \!\!\! \into \frac1h \int_\tau^{\tau+h} \left\{\1_{\{ u_n\ge-k\}}\left(   \log m_n  +   V \right)\right\}\mu(\tau) dsdxd\tau\,.
%\\
%& \quad \leq  \frac12\int_0^t \!\!\! \into  \mu\, |v|^2\, dxd\tau+ \vep \int_0^t \!\!\! \into \frac1h \int_\tau^{\tau+h} \left\{\1_{\{\hat u^\de\ge-k\}}\left(   \log m^\de  +   V \right)\right\}\mu(\tau) dsdxd\tau
\end{split}
\ee
Now we let $n\to \infty$. Recall that $u_n$ is bounded in $L^2((a,b); H^1( M ))$, and, using \rife{acca-loc}, we have in fact $\partial_t u_n$   bounded in $L^1((a,b);L^1( M)) $; using   classical compactness results  (see \cite{Simon}), we deduce that $u_n$ is   compact in $L^2((a,b);L^2( M))$. Moreover, $u_n$ is locally uniformly bounded (and bounded above up to $t=0$); since $\mu(t) \in L^1(M)$, we deduce that 
$$
\into (u_{n,k})_h(t)\, \mu(t)\, dx \mathop{\to}^{n \to \infty} \into (u_{k})_h(t)\, \mu(t)\, dx 
$$
for any $t\in [0, T)$, where $u_k= \max(u,-k)$. 
%Since $u_n $ is bounded above up to $t=0$, by the same argument we have
%$$
%\into (u_{n,k})_h(0)m_0 \, dx \mathop{\to}^{n \to \infty} \into (u_{k})_h(0) \, m_0\, dx\,.
%$$
Moreover, by weak convergence of $u_n$  in $L^2((a,b); H^1( M ))$, for any small $\eta>0$ we have
\begin{align*}
\limsup_{n\to \infty}   & \int_s^r \!\!\! \into  \mu\, v\, \scalg \nabla (u_{n,k})_h -  \frac12  |\nabla (u_{n,k})_h |^2\, \mu\, dxd\tau \\ & \leq 
\limsup_{n\to \infty}  \int_{s+\eta}^r \!\!\into  \mu\, v\, \scalg \nabla (u_{n,k})_h -  \frac12  |\nabla (u_{n,k})_h |^2\, \mu\, dxd\tau + \int_s^{s+\eta} \!\!\! \into\frac12 \mu |v|^2\, dxd\tau \\ &
\leq  \int_{s+\eta}^r \!\!\into  \mu\, v\, \scalg \nabla (u_{ k})_h -  \frac12  |\nabla (u_{ k})_h |^2\, \mu\, dxd\tau + \int_s^{s+\eta} \!\!\! \into\frac12 \mu |v|^2\, dxd\tau\,,
\end{align*}
 and letting $\eta \to 0$ Fatou's lemma yields
\begin{align*}  \limsup_{n \to \infty}   & \int_s^r \!\!\! \into  \mu\, v\, \scalg \nabla (u_{n,k})_h -  \frac12  |\nabla (u_{n,k})_h |^2\, \mu\, dxd\tau  
\\ & \qquad\qquad 
\leq  \int_s^r \!\!\! \into  \mu\, v\, \scalg \nabla (u_{ k})_h -  \frac12  |\nabla (u_{ k})_h |^2\, \mu\, dxd\tau\,.
 \end{align*}
 We are left with the last integral in \rife{deh}. Of course, if $\vep_n\to 0$, using $\mu \log m_n\leq \mu\log\mu + m_n$, we estimate
\begin{align*}
  & \vep_n \int_s^r \!\!\! \into \frac1h \int_\tau^{\tau+h} \left\{\1_{\{ u_n\ge-k\}}\left(   \log m_n  +   V \right)\right\}\mu(\tau) dsdxd\tau  \\ & \qquad \leq  
 \vep_n \int_s^r \!\!\! \into \frac1h \int_\tau^{\tau+h}  \left(   m_n + \mu\log \mu +   V\, \mu  \right)  dsdxd\tau \mathop{\to}^{n\to \infty} 0\,.
\end{align*}
 So we suppose 
that $\vep_n\to \vep >0$. In this case, from Lemma \ref{lemma:bound:nabla_m}  we get that  
$\sqrt{m_n}$ is bounded in $L^2((a,b); H^1( M ))$, for any $0<a<b<T$. Since   $m_n |\nabla u_n|^2$ is bounded in $L^1(Q_T)$ and  
$$
(\sqrt{m_n})_t= \frac12div_g(\sqrt{m_n}\nabla u_n)+\frac12  \nabla u_n\scalg \nabla \sqrt{m_n}
$$
we also deduce that     $(\sqrt{m_n})_t$ is bounded in $L^2((a,b); (H^1( M ))^*)+ L^1((a,b)\times  M )$. By  classical compactness results (see \cite{Simon}), we infer that $\sqrt{m_n}$ is strongly compact in $L^2((a,b)\times  M )$, which means that $m_n$ is relatively compact in the strong $L^1$-topology, locally in time, and converges almost everywhere, up to subsequences.
In particular, still using that  $\mu \log m_n\leq \mu\log\mu + m_n$, we can apply Fatou's lemma in the last integral in \rife{deh} (notice that $\1_{\{u_n\ge-k\}}$ converges to $\1_{\{u\ge-k\}}$ for almost every $k$, which we can suppose to be the case). Finally, by \rife{deh} we obtain, letting $n\to \infty$:
\begin{align*}
&   \into (u_{ k})_h(s)\mu(s) \, dx    - \into (u_{ k})_h(r)\, \mu(r)\, dx  \leq  \\
& \qquad  \leq     \int_s^r \!\!\! \into  \mu\, v\, \scalg \nabla (u_{ k})_h -  \frac12  |\nabla (u_{ k})_h |^2\, \mu\, dxd\tau \\ & \qquad 
\quad + \vep \int_s^r \!\!\! \into \frac1h \int_\tau^{\tau+h}  \1_{\{u\ge-k\}}\left(   \log m    +   V \right) \mu(\tau)    ds dxd\tau\,.
\end{align*}
Now we let $h\to 0$. Once more, last term can be handled through Fatou's lemma; indeed,  since $\log(m) \in L^1_{loc}((0,T); L^1(M))$, we have that 
$$
\frac1h \int_\tau^{\tau+h}  \1_{\{u\ge-k\}}\left(   \log m    +   V \right)  \mathop{\to}\limits^{h\to 0} \1_{\{u\ge-k\}}\left(   \log m    +   V \right)\quad \hbox{for a.e. $\tau\in (0,T), x\in M$}
$$
and
\begin{align*}
\mu(\tau) \frac1h \int_\tau^{\tau+h}  \1_{\{u\ge-k\}}\left(   \log m    +   V \right)   
%& \leq \frac1h \int_\tau^{\tau+h}( (\log m)_++ V )ds\, \mu(\tau)
%\\ & 
\leq \mu\, \log\mu+ \mu \, V + m _h   
\end{align*}
%and in addition $ (\log m)_h\mu\leq \log(m_h)\mu \leq m_h + \mu\log(\mu)$, 
where last sequence is strongly convergent in $L^1(Q_T)$.  Hence Fatou's lemma can be applied and yields
\begin{align*}
& \limsup_{h\to 0}\int_s^r \!\!\! \into \frac1h \int_\tau^{\tau+h}  \1_{\{u\ge-k\}}\left(   \log m    +   V \right) \mu(\tau)    ds dxd\tau  \\ & \qquad 
 \leq \int_s^r \!\!\! \into \1_{\{u\ge-k\}}\left(    \log m    +   V \right) \mu   dxd\tau\,.
\end{align*}
Similarly we argue for the term involving $\nabla u_k$, where we also use Fatou's lemma, because $\nabla u_k\in L^2_{loc}((0,T)\times M)$ and $\nabla (u_{ k})_h\to \nabla u_k$ almost everywhere, up to extracting a suitable  subsequence. 
Finally, using that $u_k$ is uniformly bounded in $(0,r+h)$ and $u$ admits one-sided traces (in the sense of monotone limits of measurable functions, as recalled above), we have that $(u_{ k})_h(t)\to u_k(t)$ (we can  use here the precise representative for $u$ at any $t$, otherwise we should limit ourselves to a.e. $t$). 
Notice that the convergence $(u_{ k})_h(t)\to u_k(t)$ is pointwise but also weak$-*$ $L^\infty$, and holds for all $t\geq 0$. Therefore, once $h\to 0$ we get
\begin{align*}
 %\into  u_{ k} (0)m_0 \, dx   
   \into  u_{ k} (s)\mu(s) \, dx - \into u_{ k} (r)\, \mu(r)\, dx   & \leq \int_s^r \!\!\! \into  [\mu\, v\, \scalg \nabla u_{ k} -  \frac12  |\nabla u_{ k} |^2\, \mu]\, dxd\tau 
 % \leq  \frac12\int_0^t \!\!\! \into  \mu\, |v|^2\, dxd\tau 
 \\ & 
 + \vep\int_s^r \!\!\! \into \1_{\{u\ge-k\}}\left(    \log m    +   V \right) \mu   dxd\tau\,.
\end{align*}
Letting now $k\to \infty$, using Fatou's lemma (and the monotone convergence theorem if $s=0$), we obtain
\begin{align*}
 %\into  u(0)m_0 \, dx    
 \into u(s)\mu(s)\, dx - \into u (r)\, \mu(r)\, dx  & \leq  \int_s^r \!\!\! \into  [\mu\, v\, \scalg \nabla u  -  \frac12  |\nabla u  |^2\, \mu]\, dxd\tau
 %\frac12\int_0^t \!\!\! \into  \mu\, |v|^2\, dxd\tau
 \\ & \quad   + \vep\int_s^r \!\!\! \into  \left(    \log m    +   V \right) \mu   dxd\tau\,.
\end{align*}
With a symmetric argument, using the left time-averages $u_{-h}$, we also obtain the inequality 
\begin{align*}
 \into  u(r)\mu(r)  \, dx    - \into u (t)\, \mu(t)\, dx  & \leq  \int_r^t \!\!\! \into  [\mu\, v\, \scalg \nabla u  -  \frac12  |\nabla u  |^2\, \mu]\, dxd\tau
 %\frac12\int_0^t \!\!\! \into  \mu\, |v|^2\, dxd\tau
 \\ & \quad  + \vep\int_r^t \!\!\! \into  \left(    \log m    +   V \right) \mu   dxd\tau\,
\end{align*}
for every $0<r<t\leq T$. Adding the last two inequalities we obtain \rife{vecchia-OPS}.
\end{proof}

\begin{remark}\rm  The inequality \rife{vecchia-OPS} includes the case that $s=0$ or $t=T$. In particular, it is a byproduct of the previous proof that $u(0)\in L^1(dm_0)$ and $u(T)\in L^1(dm_1)$, which would not be guaranteed a priori. This is indeed a consequence of item 2 of the statement, which implies that $\into u(0)m_0 \, dx $ is bounded below and $ \into u(T)m_1 \, dx$ is bounded above. Since the other bounds are obvious from \rife{int-unif-bound-u}, this yields  $u(0)\in L^1(dm_0), u(T)\in L^1(dm_1)$.
\end{remark}

\begin{remark}\label{vep0rem}\rm  We point out that inequality \rife{vecchia-OPS} remains true when $\vep=0$ even without requiring that $\mu\log(\mu)\in L^1(Q_T)$. It is enough that $\mu\in L^1(Q_T)$ in order that \rife{vecchia-OPS} holds  for all $s,t\in (0,T)$ (such that  $\mu(s), \mu(t)\in L^1(M)$), and even for $s=0, t=T$ assuming for instance that $\mu(\cdot)$ is continuous in   $[0,T]$ in the weak $L^1$-topology.

In fact, we know from Proposition \ref{prop:local_bound_m}  that $\vep_n  (\log(m_n))_+$ is locally uniformly bounded; in addition,  if $\vep_n \to 0$,  using  estimate \rife{loc_bound_entropy} we have  
\begin{align*}
\|\vep_n  (\log(m_n))_+\|_{L^1((0,T)\times M)} & \leq \vep_n \iinto m_n (\log(m_n))_+
\\
& 
 \leq C\, \vep_n   \left( 1+   |\log(\vep_n )| \right)   \to 0\,.
\end{align*}
Hence $\vep_n (\log(m_n))_+$ converges to zero in $L^1$ and weakly$-*$ in $L^\infty((a,b)\times M)$.  This implies that last term in \rife{deh} vanishes as  $n\to \infty$, only using that $\mu$ is in $L^1(Q_T)$. Thus we obtain again the inequality
$$
  \into  u_{ k} (s)\mu(s) \, dx - \into u_{ k} (r)\, \mu(r)\, dx    \leq \int_s^r \!\!\! \into  [\mu\, v\, \scalg \nabla u_{ k} -  \frac12  |\nabla u_{ k} |^2\, \mu]\, dxd\tau  
$$
for any $0<s<r<T$. Since $u$ is locally bounded, here one can  readily get rid of the truncation $k$ for $s,r\in (0,T)$. To get the inequality up to $t=0$, one can first let $s\to 0^+$  using that $\mu(\cdot)$ is continuous in the weak $L^1$-topology and $u_k$ is uniformly bounded. This leads the  first integral towards $\into  u_{ k} (0)m_0 \, dx$, which gives the desired term by letting $k\to \infty$ and using the monotone convergence theorem. Simmetrically one argue up to $t=T$ to get \rife{vecchia-OPS} in the whole interval $(0,T)$.   \qed
\end{remark}

Now we have all the ingredients to prove our  main result  on the existence and characterization of minima for nonnegative marginals $m_0, m_1$, which are  only assumed to be $L^1(M)$.

\begin{theorem}\label{weak:thm}
    %Let $ M $ be a $C^3$, bounded, convex domain in $\R^d$ or a compact Riemannian manifold without boundary and 
    Let $V\in W^{2,\infty}( M ),\,\, m_0, m_1 \in \mathcal{P}( M )\cap L^1(M)$, and $\vep >0$. 
    
    %Suppose that $ M $ has non-negative Ricci curvature or that it holds $\frac{1}{\epsilon}\nabla^2 V+Ricc_g\ge0$\footnote{vecchio}.\\
    
    Then there is a unique $m$ and a unique $u$ such that $(u,m)$ is a weak solution of problem \eqref{weak:1st_order} with $\int_ M  u(T)m_1=0$.
    Moreover we have that $u,m \in L^\infty_{loc}(Q_T)$, and $(m, \nabla u)$ is the unique minimum of the functional $\cF_\vep$ in \rife{func}.
%    \begin{equation}
%    \label{functional}
%    \begin{aligned}
%        \min \mathcal{F}_\epsilon(m,v)&\coloneqq\iint_Q \frac{1}{2}\abs{v}^2d m+ \vep \iint_Q m( \log m +V)\,dxdt,\\(m,v)&:\begin{cases}\partial_t m-div_g(vm)=0\\m(0)=m_0\\m(T)=m_1\end{cases}.
%    \end{aligned}        
%    \end{equation}
\end{theorem}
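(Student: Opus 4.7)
The plan is to construct the weak solution by a smooth approximation of the marginals. Choose sequences $m_{0,n}, m_{1,n} \in W^{1,\infty}(M)$ of strictly positive probability densities with $m_{i,n} \to m_i$ strongly in $L^1(M)$ (e.g.\ $m_{i,n} := (1-\tfrac{1}{n}) S_{1/n} m_i + \tfrac{1}{n\,\mathrm{Vol}(M)}$, where $S_t$ is the heat semigroup on $M$). Theorem \ref{smoothex} then furnishes smooth solutions $(u_n, m_n)$ of \rife{opsys} normalized by $\int_M u_n(T)m_{1,n}\,dx = 0$, with $m_n > 0$ on $\overline{Q}_T$.

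The uniform a priori estimates come directly from the results established above. Lemma \ref{stime} (with $\delta=0$) gives the two-sided local bound \rife{int-unif-bound-u} on $u_n$, the global energy bound \rife{glob-ene}, and the local integrability \rife{acca-loc} of $|\nabla u_n|^2$ and $|\log m_n|$; Corollary \ref{boundmloc} gives a local $L^\infty$ bound on $m_n$; and Lemma \ref{lemma:bound:nabla_m} bounds $\sqrt{m_n}$ in $L^2_{loc}((0,T); H^1(M))$. Combining the latter with
\[
\partial_t \sqrt{m_n} = \tfrac{1}{2}\, div_g(\sqrt{m_n}\,\nabla u_n) + \tfrac{1}{2}\, \nabla u_n \scalg \nabla \sqrt{m_n},
\]
the Aubin--Lions lemma yields strong compactness of $m_n$ in $L^1_{loc}(Q_T)$ with a.e.\ convergence along a subsequence. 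The bound on $\iinto m_n|\nabla u_n|^2$ renders the curves $t\mapsto m_n(t)$ equicontinuous in $(\cP(M), W_2)$, so Arzelà--Ascoli yields $m_n \to m$ in $C^0([0,T];\cP(M))$ with $m(0)=m_0$ and $m(T)=m_1$. Simultaneously $u_n \rightharpoonup u$ weakly in $L^2_{loc}((0,T);H^1(M))$, with $u$ inheriting the local $L^\infty$ bound \rife{int-unif-bound-u}.

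Lemma \ref{stab-trace} applied to this sequence delivers the distributional sub-solution inequality for $u$ (item (i) of Definition \ref{def:weak_sol}) and the trace relaxation $\limsup_n \int_M u_n(0)m_{0,n} \leq \int_M u(0)m_0$, $\liminf_n \int_M u_n(T)m_{1,n} \geq \int_M u(T)m_1$. The continuity equation (item (ii)) for $m$ passes to the limit because $m_n \to m$ strongly in $L^1_{loc}$ while $\nabla u_n \rightharpoonup \nabla u$ weakly in $L^2_{loc}$. For the cross-product identity \rife{weak_sol:cross_prod}, multiply the HJ equation for $u_n$ by $m_n$ and integrate:
\[
\int_M u_n(0)m_{0,n}\,dx - \int_M u_n(T)m_{1,n}\,dx = \iinto \left(\tfrac{1}{2}|\nabla u_n|^2 m_n + \vep m_n(\log m_n + V)\right)dxdt.
\]
Lower semicontinuity of $(p,m)\mapsto |p|^2/(2m)$ and of $m\mapsto m\log m$, combined with the trace relaxation, gives $\geq$ in \rife{weak_sol:cross_prod}; the reverse inequality is \rife{vecchia-OPS} applied to $(\mu,v)=(m,\nabla u)$ at $s=0, t=T$. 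Thus $(u,m)$ is a weak solution, and since $\log m \in L^1_{loc}$ forces $m>0$ a.e., item (i) of Theorem \ref{main} is also established.

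To show that $(m,\nabla u)$ is the minimum of $\cF_\vep$, the Benamou--Brenier argument of Lemma \ref{itsmin} adapts to the weak setting. For any admissible competitor $(\mu,v)$ with $\cF_\vep(\mu,v)<\infty$, apply \rife{vecchia-OPS} at $s=0, t=T$, use Young's inequality $\mu v \scalg \nabla u \leq \tfrac{1}{2}\mu|v|^2 + \tfrac{1}{2}\mu|\nabla u|^2$ and the convexity inequality $\mu \log m - \mu \log \mu \leq m - \mu$ (whose integral over $Q_T$ vanishes since $\mu(t), m(t)$ are probability measures for every $t$), and combine with \rife{weak_sol:cross_prod}; this yields $\cF_\vep(m,\nabla u) \leq \cF_\vep(\mu,v)$. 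Uniqueness of the minimizing pair $(m, mv)$ is immediate from strict convexity of $\cF_\vep$ in the variables $(m, w)=(m, mv)$, since both $\Psi(p,m)$ from \rife{BB} on $\{m>0\}$ and $m\mapsto m\log m$ are strictly convex; hence any other weak solution, being itself a minimum by the same argument, must agree with $(m,\nabla u)$ a.e., and the HJ equation with $\log m\in L^1_{loc}$ determines $u$ up to an additive constant which the normalization $\int_M u(T)m_1 = 0$ fixes. The main technical hurdle is precisely the identification of the boundary integrals $\int u_n(0)m_{0,n}$ at the one-sided traces, since $u_n$ is only locally bounded in $Q_T$ and the traces exist merely as monotone pointwise limits; this is exactly the content of the truncation-and-cutoff argument in Lemma \ref{stab-trace}.
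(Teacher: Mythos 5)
Your proposal is correct and follows essentially the same route as the paper: heat-flow regularization of the marginals, the smooth solutions of Theorem \ref{smoothex}, the uniform estimates of Lemma \ref{stime}, Corollary \ref{boundmloc} and Lemma \ref{lemma:bound:nabla_m}, Aubin--Lions compactness for $\sqrt{m_n}$, Lemma \ref{stab-trace} for the relaxed subsolution and traces, and the combination of lower semicontinuity with \rife{vecchia-OPS} to close the cross-product identity and the minimality argument. The only point treated more lightly than in the paper is the uniqueness of $u$ itself (as opposed to that of $m$), which the paper delegates to the argument of \cite[Thm 1.16]{achdou2021mean} rather than to the heuristic "the HJ equation determines $u$ up to a constant".
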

\begin{proof}  We first approximate $m_0, m_1$ with positive smooth marginals. To this goal, we consider the solutions of the heat equation 
    $$
    \frac{\partial}{\partial t}\Tilde{m}_i=\Delta \tilde m_i\quad\text{with $\Tilde{m}_i(0, \cdot)=m_i$ for $i=0,1$.}
    $$ 
    Thanks to the compactness of $ M $, it is well-known  that such solutions are smooth on $(0,+\infty)\times M $ and that they are curves of probability measures (cfr. \cite[Chapter 7]{grigoryan2009heat}). Furthermore we have $\Tilde{m}_i(t,\cdot)>0$ for every $t>0$ by the strong parabolic maximum principle (cfr. \cite[Theorem 8.11]{grigoryan2009heat}) and  $\Tilde{m}_i(t,\cdot)\to m_i$ in $L^1( M )$ for $t\to0$ (\cite[Theorem 7.19]{grigoryan2009heat}).
    
For every $n\in\N$ let $m_{0,n}=\Tilde{m}_0(\frac{1}{n},\cdot)$ and $m_{1,n}=\Tilde{m}_1(\frac{1}{n},\cdot)$. 
By Theorem \ref{smoothex}, we obtain the existence of a couple  $(u_n,m_n)$ which is a { classical  solution} of
    \begin{equation}
\label{weak:thm:0}
\begin{cases}
    -\partial_t u+\frac{1}{2}\abs{\nabla u}^2=\epsilon (\log(m)+V(x) )\quad &\text{in $Q_T $}\\
    \partial_t m-div_g(m \nabla u)=0 &\text{in $Q_T$}\\
    m(0,\cdot)=m_{0,n}, \quad m(T,\cdot)=m_{1,n} &\text{in $ M $}
\end{cases}
\end{equation}
with $\int_ M  u_n(T)m_{1,n}=0$ $\forall n\in\N$.

%    {\it Step 1. Letting  $\de \to 0$ in \rife{mfg-de}. }\quad We first suppose that $m_0, m_1$ are positive and smooth, so that we can apply Theorem \ref{teo-delta}. We call $(u^\de, m^\de)$ the solution to \rife{mfg-de}, and $\hat u^\de= u^\de- \into u^\de(T)\,m_1$. Notice that $\de\|u^\de\|_\infty$ is bounded by \rife{deltau},  because the marginals are bounded below and above. Since
%$$
%m^\de(0)= \exp(-\de\, u^\de(0)) m_0\,,\qquad m^\de(T)= \exp(\de\, u^\de(T)) m_1\,,
%$$
%we notice that $m^\de(0)$ and $m^\de(T)$ have positive and bounded mass.
%% and bounded entropy. 
% In particular, the quantity $E(m^\de(0), m^\de(T))$ is bounded with respect to $\de$. 
%We note that the path constructed in Proposition \ref{prop:exist_comp}, induces a uniform bound on ${\mathcal B}_\vep(m_{0,n},m_{1,n})=\min \mathcal{F}_\vep(m_{0,n},m_{1,n})$. 
Now we use Lemma \ref{stime}   and  Lemma \ref{lemma:bound:nabla_m} to get estimates for $u_n, m_n$. In particular, $u_n$ satisfies \rife{int-unif-bound-u}, hence it is locally uniformly bounded, and the same holds for $m_n$ due to  Corollary \ref{boundmloc}. In addition $u_n$  is bounded in $L^2((a,b); H^1( M ))$ and 
$m_n|\nabla u_n|^2$ is bounded in $L^1(Q)$. With the same compactness arguments used in Lemma \ref{stab-trace}, we deduce that both $u_n$ and $m_n$ are  relatively compact in  the strong $L^1(M)$-topology, locally in time. 
%$$
%(\sqrt{m^\de})_t= \frac12div_g(\sqrt{m^\de}\nabla u^\de)+\frac12  \nabla u^\de\scalg \nabla \sqrt{m^\de}
%$$
%we also deduce that     $(\sqrt{m^\de})_t$ is bounded in $L^2((a,b); (H^1( M ))^*)+ L^1((a,b)\times  M )$. By a classical compactness result (see \cite{Simon}), we infer that $\sqrt{m^\de}$ is strongly compact in $L^2((a,b)\times  M )$, which means that $m^\de$ is relatively compact in the strong $L^1$-topology, locally in time.
Therefore, we deduce that there exist functions $u,m$ such that for a subsequence, not relabeled, 
\begin{align*}
u_n & \rightarrow u \quad \hbox{weakly in $L^2((a,b); H^1( M ))$ and strongly in $L^p((a,b) \times( M ))\, \forall p>1$,} \\  m_n & 
\rightarrow m \quad \hbox{strongly in $L^p((a,b) \times( M ))\, \forall p>1$, and a.e. in $Q$.}
\end{align*}
In addition, we know that $m_n(t)$ has bounded entropy (at any time $t$), so it is weakly compact in $L^1( M )$; and due to the bound of $m_n|\nabla u_n|^2$ (see \rife{glob-ene}), we get that $m_n$ is  equi-continuous from  $ [0,T]$ into the space of measures. By Ascoli-Arzela's theorem, $m_n(t)\to m(t)$ in the Wasserstein topology (uniformly in $[0,T]$), and actually in $L^1$-weak for all $t\in (0,T)$, due to the bound of the entropy. By continuity, we conclude that $m(0)=m_0$ and $m(T)=m_1$.
Notice that, by the local strong convergence of $m_n$, and due to the bound \rife{glob-ene}, we also deduce that
\be\label{root}
\sqrt{m_n}\nabla u_n \to \sqrt m \nabla u\quad \hbox{weakly in $L^2((0,T)\times M ))$,}
\ee
and, in particular, we have that $m_n\nabla u_n\to m\nabla u$ in the sense of distributions, and actually weakly in $L^1((0,T) \times M )$. Thus, we proved so far that  
 $m\in C^0([0,T]; \cP( M ) ) $ and is a weak solution of 
  \be\label{m-sol}
\begin{cases}  \partial_t m - div_g(m\nabla u)=0 & \hbox{in $Q_T $,} \\
m(0)=m_0 \,,\, m(T)=m_1 \,.& 
\end{cases}
\ee
In addition, by \rife{acca-loc} and Fatou's lemma, we have that $\log (m) \in L^1((a,b) \times( M ))$, for any $0<a<b<T$, and in particular $m>0$ a.e. in $Q$. 
  
As for the Hamilton-Jacobi equation, we use Lemma \ref{stab-trace} to deduce that 
\be\label{ueq}
-  \partial_t u + \frac12 |\nabla u|^2 \leq \vep (\log(m)+V)\,.
\ee
and   we also have
\be\label{trace0}
\limsup_{n\to \infty}  \int_ M  m_{0n} u_n(0)dx \leq  \int  u(0) dm_0\,.
\ee
In particular (since the upper bound follows from \rife{int-unif-bound-u}), we deduce that $u(0)\in L^1(dm_0)$.   
Similarly we reason for $t=T$, obtaining
%;  first we observe that, by \rife{int-unif-bound-u}, $u^\de(T)$ is bounded below, hence
%$$
%\int_ M  m_1 (\hat u^\de(T)\wedge k) dx 
%= \int_ M  m^\de(1) (\hat u^\de(T)\wedge k) dx - c_k \| m^\de(T)-m_1\|_{L^1}
%$$
%so the strong $L^1$ convergence of $m^\de(T)$ to $m_1$ implies
%$$
%\liminf_{\de\to 0}  \int_ M  m_1 (\hat u^\de(T)\wedge k) dx 
%  = \liminf_{\de\to 0}  \int_ M  m^\de(1) (\hat u^\de(T)\wedge k) dx
%$$
%The right-hand side is estimated from Lemma \ref{stab-trace}, whereas the left-hand side is non positive due to the normalization condition $\into \hat u^\de(T)\, dm_1=0$. So we get,  as $\de\to 0$,
%$$
%  \int_ M  m_1 ( u(T)\wedge k) dx \leq  \liminf_{\de\to 0}
%$$
%and by monotone convergence theorem we deduce 
\be\label{traceT}
\into m_{1}\, u(T)\, dx \leq \liminf_{n\to \infty}  \int_ M  m_{1n}u_n(T)=0\,.
\ee
As before, this implies, in particular, that  $u(T)\in L^1(dm_1)$. 
Now we only need to conclude that $(u,m)$ satisfy condition 
(iii) of Definition \ref{def:weak_sol}.  
To this purpose, we   follow the steps of \cite{ORRIERI20191868}, \cite{achdou2021mean},  on account of Lemma \ref{stab-trace}.
First we go back to \rife{weak:thm:0}, which implies, using $\into u_n(T)m_{1n}=0$,
$$
 \int_ M  m_{0n}\hat  u_n(0)dx =  \int_0^T\!\!\int_ M \frac{1}{2} m_n|\nabla u_n|^2+\epsilon m_n( \log m_n+ V)\,dxdt\,.
$$
Using \rife{root} and weak lower semicontinuity, as well as \rife{trace0}, we deduce, as $n\to \infty$:
\be\label{half1}
\int_0^T\!\!\int_ M \frac{1}{2} m |\nabla u|^2+\epsilon m ( \log m + V)\,dxdt \leq \int  u(0) dm_0\,.
\ee
However, applying \rife{vecchia-OPS}  with $(\mu,v)=(m,\nabla u)$, $s=0$ and $t=T$, we get
%But applying Theorem 5.12 in \cite{ORRIERI20191868},  from the duality between \rife{m-sol} and \rife{ueq} we have the reverse inequality
\begin{align*}
 \int_ M  m_{0}u(0)dx & \le\int_0^T\!\!\int_ M \frac{1}{2} \abs{\nabla u}^2 m+\epsilon m(\log m+ V)\,dxdt + \into u(T)\,dm_1 
 \\ & \leq \int_0^T\!\!\int_ M \frac{1}{2} \abs{\nabla u}^2m+\epsilon m(\log m+ V)\,dxdt 
\end{align*}
where we used \rife{traceT}. 
Putting together  the above information with \rife{half1}  we obtain
$$
\int_0^T\!\!\int_ M \frac{1}{2} m |\nabla u|^2+\epsilon m ( \log m + V)\,dxdt =  \int_ M  m_{0}u(0)dx\,,
$$
and,  in between, we also get that $\into m_1\, u(T)\, dx=0$. This means that $(u,m)$ satisfy Definition \ref{def:weak_sol}. In addition, from the bounds derived before, we have
$u,m\in L^\infty_{loc}(Q_T)$.
%We remark that, if $u^\de(0)$ weakly converges to some function $\chi$, then we must have $\chi \geq u(0)$.
%This is a relaxation effect in the limit of $u^\de$ towards $u$. We shortly give a proof of this claim;     

We are left to prove that $(m,\nabla u)$ is the unique minimum of $\cF_\vep$. 
To show that, let  
%{\it Step 3. $(m, \nabla u)$ is the unique minimum point for ${\mathcal F}_\vep$.}   Let 
$(\mu,v)$ be an admissible couple for the functional ${\mathcal F}_\vep$.  Without loss of generality, we can assume that $\cF_\vep(\mu,v)<\infty$, in particular  $\mu\log\mu\in L^1(Q_T)$.  We use once more \rife{half1} together with \rife{vecchia-OPS} and we get
\begin{align*}
\int_0^T\!\!\int_ M & \left( \frac{1}{2} m |\nabla u|^2   +\epsilon m ( \log m + V)\right) dxdt     \leq 
\int_0^T \!\!\! \into  [\mu\, v\, \scalg \nabla u  -  \frac12  |\nabla u  |^2\, \mu]\, dxdt
 %\frac12\int_0^t \!\!\! \into  \mu\, |v|^2\, dxd\tau
 \\ & \quad  \qquad\qquad \qquad\qquad\qquad\qquad + \vep\int_0^T \!\!\! \into  \left(    \log m    +   V \right) \mu   dxdt
 \\
 & \qquad\qquad \qquad\qquad\leq \int_0^T \!\!\! \into \frac12 |v|^2\, \mu\, dxdt+ \vep\int_0^T \!\!\! \into  \left(    \log m    +   V \right) \mu   dxdt\,.
\end{align*}
%Reasoning\footnote{dovrebbe essere semplificato usando \rife{vecchia-OPS} ?} as in the proof of Lemma \ref{itsmin} we get
%\begin{align*}
% & \iinto  \frac{1}{2} m^\de|\nabla u^\de|^2  \leq  \frac12 \iinto \mu|v|^2 - \vep \iinto m^\de(\log(m^\de)+ V)\\
% & \qquad\qquad 
%   +\vep \iinto (\log m^\de+ V)\mu  +
%\into u^\de(T)m_1- \into u^\de(0)m_0   \\
%&\qquad\qquad -\into u^\de(T)m^\de(T)+\into u^\de(0) m^\de(0)
% \\ & \qquad \leq  \frac12 \iinto \mu|v|^2+\vep \iinto \mu(\log(\mu)+ V)
% \\ & \quad 
% - \vep \iinto m^\de(\log(m^\de)+ V) 
% +\vep \iinto (\log m^\de-\log\mu)\mu 
%\\ & \quad  +
%\into u^\de(T)m_1- \into u^\de(0)m_0   -\into u^\de(T)m^\de(T)+\into u^\de(0) m^\de(0)
%\end{align*}
By the strict convexity of $r\to r\log r -r$, for every $a\ge0$ and $b>0$ we have 
$$
a\log(a)-a\ge b\log(b)-b+\log(b) (a-b)
$$
where  $a\log a$ is extended as $0$ for $a=0$. Furthermore the equality holds if and only if $a=b$. This is equivalent to
\be\label{ineq:conv_log}
(\log(b)-\log(a))a\le b-a
\ee
with equality if and only if $a=b$. Applying this inequality with $a=\mu$ and $b=  m$ (which is positive a.e.), we obtain 
\begin{align*}
  \int_0^T\!\!\int_ M\left( \frac{1}{2} m |\nabla u|^2 +\epsilon m ( \log m + V)\right) dxdt   & 
   \leq  {\mathcal F}_\vep(\mu,v)+ \vep\int_{0}^{T}\!\!\!\!\int_ M  (m-\mu)dxdt
\\  & =\,{\mathcal F}_\vep(\mu,v)
\end{align*}
and the equality holds if and only if $\mu=m$.  This concludes the proof that $(m,\nabla u)$ is the unique minimum of $\cF_\vep$.

In fact, the solution we found is also the unique weak solution of the system \rife{weak:1st_order} (up to addition of a constant to $u$). Indeed,  the uniqueness of $(m, u)$   can be proved similarly as in \cite[Thm 1.16]{achdou2021mean}. Compared to this latter result, we observe that, being $m>0$ almost everywhere, there is no loss of information here due to  the set where $m$ vanishes.
\end{proof}

\section{Convergence to Wasserstein geodesic} %as $\vep\to 0$}
\label{sec6}

We now briefly analyze the limit   $\epsilon\rightarrow0$ to show that the minimal curves  of   $\cF_\vep$ converge to the geodesics of the classical  mass transport problem
 \begin{equation}
         \label{functional:opt}\min \mathcal{F}_0(m,v)\coloneqq\iinto \frac{1}{2}\abs{v}^2\, dm\,, \quad(m,v):\begin{cases}\partial_t m-div_g(vm)=0\\m(0)=m_0\\m(T)=m_1\,.\end{cases}
     \end{equation}
We first consider the easier case that the marginals $m_0, m_1$ have finite entropy. This assumption implies that  $\mathcal{F}_\epsilon$ converges to  $\mathcal{F}_0$ with a rate of order $\vep$.

\begin{theorem}\label{thm:conv_vep}
%    Let $ M $ be a $C^3$, bounded, convex domain in $\R^d$ or a compact Riemannian manifold without boundary and let $V\in W^{1,\infty}( M ), m_0, m_1 \in L^\infty( M )\cap\mathcal{P}( M ).$ If $ M $ is a Riemannian manifold without boundary, suppose that it has Ricci curvature bounded from below.\\% or that there exist a $\epsilon_0>0$ such that $\frac{1}{\epsilon}\nabla^2 V+Ricc_g\ge0$ for every $\epsilon\le\epsilon_0$.\\
Let $V\in W^{2,\infty}( M ),\,\, m_0, m_1 \in \mathcal{P}( M )\cap L^1(M)$ and such that $\cH(m_0;\nu), \cH(m_1; \nu) <\infty$. For $\vep \in (0,1)$, let $(m_\vep, u_\vep)$ be the unique solution of \eqref{weak:1st_order} given by Theorem \ref{weak:thm}, and $(m_\vep, \nabla u_\vep)$ the unique minima of $\cF_\vep$.

As $\vep \to 0$, we have:
\be\label{converge}\begin{split}
m_\vep & \to m \quad \hbox{in $C^0([0,T],\cP(M))$ and  weakly in $L^1(Q_T)$,}
\\ 
m_\vep \nabla u_\vep & \to m\nabla u\quad \hbox{weakly in $L^1(Q_T)$,}
\end{split}
\ee
where $m$ is the  Wasserstein geodesic between $m_0, m_1$, and $(m,\nabla u)$ is a minimum of $\mathcal{F}_0$.
%$m_\vep (t) \to m(t)$ uniformly in $\cP(M)$, where $m$   is the Wasserstein geodesic between $m_0, m_1$, 
%unique minimum of $\cF_0$:
%    \begin{equation}
%        \label{functional:opt}\min \mathcal{F}_0(m,v)\coloneqq\iint_Q \frac{1}{2}\abs{v}^2\, m+\iint_QmV\,dxdt,\quad(m,v):\begin{cases}\partial_t m-div_g(vm)=0\\m(0)=m_0\\m(T)=m_1\end{cases},
%    \end{equation}
%and $m_\vep\nabla u_\vep $ weakly converges to $m\nabla u$, where $(u,m)$ is a weak solution of problem \eqref{elliptic:1st_order}, with $\int_ M  u(T)m_1=0$ and $\epsilon=0$.

%Then there is a unique $m$ and a unique $u$ (up to $m$-negligible sets) such that $(u,m)$ is a weak solution of problem \eqref{elliptic:1st_order}, with $\int_ M  u(T)m_1=0$ and $\epsilon=0$.\\
    %Moreover we have that 
    %\begin{align*}
    %    \norma{m}_{L^\infty(Q)}&\le C(\norma{m_0}_\infty,\norma{m_1}_\infty, d, T, \abs{ M }, V)
    %\end{align*}
    Moreover, we have  
%    $m$ is the unique minimum of the functional 
%    \begin{equation}
%        \label{functional:opt}\min \mathcal{F}(m,v)\coloneqq\iint_Q \frac{1}{2}\abs{v}^2\, m+\iint_QmV\,dxdt,\quad(m,v):\begin{cases}\partial_t m-div_g(vm)=0\\m(0)=m_0\\m(T)=m_1\end{cases}.
%    \end{equation}
$(\min \mathcal{F}_0)=\lim\limits_{\epsilon\rightarrow0} (\min \mathcal{F}_\epsilon)$, and in particular, for some $K>0$, 
\be\label{rateeps}
  | \min \mathcal{F}_\epsilon - \min \mathcal{F}_0|\leq K\, \vep\,.
\ee
%where the $\mathcal{F}_\epsilon$ are the functionals defined by \eqref{functional}.
\end{theorem}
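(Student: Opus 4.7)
The strategy is to bound $\min\mathcal{F}_\vep$ from both sides in terms of $\min\mathcal{F}_0$, producing the rate \rife{rateeps} first, and then deduce the convergence of the minimizers by a standard compactness-plus-lower semicontinuity argument.

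For the upper bound, I would take the (unique) Wasserstein geodesic $(m^*,v^*)$ from $m_0$ to $m_1$ as a competitor for $\mathcal{F}_\vep$. The $(-\lambda)$-displacement convexity of the relative entropy along the Wasserstein geodesic (already invoked in the proof of Proposition \ref{prop:exist_comp}, or a consequence of Corollary \ref{corollary:semi_conv:log} in the limit $\vep\to 0$) yields the affine-plus-quadratic upper bound
\[
\mathcal{H}(m^*(t);\nu)\le \tfrac{T-t}{T}\mathcal{H}(m_0;\nu)+\tfrac{t}{T}\mathcal{H}(m_1;\nu)+\tfrac{\lambda}{2T}W_2(m_0,m_1)^2\,t(T-t),
\]
so that $\int_0^T\mathcal{H}(m^*(t);\nu)\,dt\le C$ whenever $\mathcal{H}(m_0;\nu),\mathcal{H}(m_1;\nu)<\infty$. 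This gives $\min\mathcal{F}_\vep\le\mathcal{F}_\vep(m^*,v^*)\le \min\mathcal{F}_0+C\vep$. For the lower bound, I would use that $(m_\vep,\nabla u_\vep)$ is admissible for $\mathcal{F}_0$ and Jensen's inequality $\mathcal{H}(m_\vep(t);\nu)\ge -\log\nu(M)$, yielding $\min\mathcal{F}_\vep\ge\min\mathcal{F}_0-C\vep$. Together these give \rife{rateeps} and in particular $\min\mathcal{F}_\vep\to\min\mathcal{F}_0$.

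The compactness step would rest on two bounds. First, the global energy estimate \rife{glob-ene} from Lemma \ref{stime} gives $\iinto m_\vep|\nabla u_\vep|^2\le\hat C$, which in turn implies equi-H\"older continuity of $m_\vep$ as maps $[0,T]\to(\mathcal{P}(M),W_2)$ (via the standard inequality $W_2(m_\vep(s),m_\vep(t))^2\le|s-t|\int_s^t\!\!\into m_\vep|\nabla u_\vep|^2$). Ascoli-Arzel\`a then provides a subsequence with $m_\vep\to m$ in $C^0([0,T];\mathcal{P}(M))$, and $m(0)=m_0$, $m(T)=m_1$. Second, the momenta $w_\vep:=m_\vep\nabla u_\vep$, viewed as vector-valued measures, are uniformly bounded; extracting a further subsequence, $w_\vep\rightharpoonup w$ narrowly. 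The convex and lower semicontinuous Benamou-Brenier functional $\Psi$ from \rife{BB} gives
\[
\iinto \Psi(w,m)\,dxdt\le\liminf_{\vep\to 0}\iinto\Psi(w_\vep,m_\vep)\,dxdt=\liminf_{\vep\to 0}\tfrac12\iinto m_\vep|\nabla u_\vep|^2\,dxdt\le\hat C,
\]
so $w=m\tilde v$ for some velocity $\tilde v$ with $\iinto m|\tilde v|^2<\infty$, and $(m,\tilde v)$ is admissible for $\mathcal{F}_0$. Passing to the limit distributionally in $\partial_t m_\vep-\operatorname{div}_g w_\vep=0$ identifies $(m,\tilde v)$ as a solution of the continuity equation between $m_0$ and $m_1$.

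To conclude, the same lower semicontinuity combined with the rate gives
\[
\mathcal{F}_0(m,\tilde v)\le \liminf_{\vep\to 0}\mathcal{F}_0(m_\vep,\nabla u_\vep)\le\liminf_{\vep\to 0}\min\mathcal{F}_\vep=\min\mathcal{F}_0,
\]
so $(m,\tilde v)$ minimizes $\mathcal{F}_0$. Hence $m$ is a Wasserstein geodesic between $m_0$ and $m_1$, and $\tilde v=\nabla u$ for some Kantorovich potential $u$ by the Benamou-Brenier characterization. Since $m_0\in L^1(M)$ is absolutely continuous, the Wasserstein geodesic on the compact manifold $M$ is unique (McCann), so the limit $m$ is independent of the subsequence and the whole family $m_\vep$ converges. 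Finally, the weak $L^1(Q_T)$ convergence of $m_\vep$ follows because the $\Lambda_\vep$-semiconvexity of $\vfi_\vep(t)=\into m_\vep(t)\log m_\vep(t)\,dx$ from \rife{bound:semi_conv:log}, together with the finite endpoint entropies, gives a $\vep$-uniform bound on $\sup_{t\in[0,T]}\vfi_\vep(t)$, hence equi-integrability of $m_\vep$ on $Q_T$ via de la Vall\'ee Poussin; the weak $L^1$ convergence of $m_\vep\nabla u_\vep$ is then a consequence of $w_\vep\rightharpoonup m\nabla u$ narrowly and the energy bound controlling concentration.

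The main obstacle is the momentum passage to the limit: one must ensure the weak limit of $m_\vep\nabla u_\vep$ has the required gradient structure and is not a strictly larger vector measure. The Benamou-Brenier lower semicontinuity and the fact that the limit $(m,\tilde v)$ is itself a minimizer of $\mathcal{F}_0$ together resolve this, because for an optimal $\mathcal{F}_0$-curve the velocity is a.e. the gradient of a (Kantorovich) potential; uniqueness of the Wasserstein geodesic for absolutely continuous data then upgrades subsequential convergence to full convergence.
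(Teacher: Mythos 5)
Your proposal is essentially correct, but it takes a genuinely different route from the paper's proof. The paper proves this theorem by passing to the limit in the PDE system: it uses the trace/stability Lemma \ref{stab-trace} and inequality \eqref{vecchia-OPS} to identify both the limit potential $u$ (as the weak $L^2_{loc}H^1$-limit of $u_\vep$) and the limit momentum $w=m\nabla u$, and it obtains the rate \eqref{rateeps} by testing \eqref{vecchia-OPS} against the geodesic. Your proposal instead runs a pure $\Gamma$-convergence argument: a two-sided competitor estimate (geodesic as a competitor for $\cF_\vep$ plus displacement convexity of $\cH$ along the geodesic, Jensen for the reverse direction) gives $|\min\cF_\vep-\min\cF_0|\le K\vep$ first, and then compactness together with lower semicontinuity of the Benamou--Brenier functional $\Psi$ shows that the limit curve is optimal for $\cF_0$, from which the gradient structure of the limit velocity and the identification with the unique geodesic follow by McCann's uniqueness theorem. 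Both are valid; your approach is shorter and more in the spirit of Mosco/$\Gamma$-convergence, while the paper's approach additionally exhibits the limit $u$ as the weak limit of the adjoint states $u_\vep$ (a subsolution of the HJ equation), which is exploited later (e.g.\ in the proof of the displacement convexity corollary and Theorem \ref{conv_vep2}). One caveat worth flagging: your approach needs the displacement semiconvexity of $\cH$ along the Wasserstein geodesic as an \emph{input} for the upper competitor bound; the paper deliberately avoids relying on this externally-sourced fact in the main line of argument (it is only invoked in the preliminary Proposition \ref{prop:exist_comp}, and can be avoided even there, cf.\ Remark \ref{dasoli}), and instead re-derives it as a consequence of this very theorem, so you should cite \cite{Daneri_2008} directly rather than Corollary \ref{corollary:semi_conv:log}, whose passage to the limit $\vep\to0$ presupposes the present result. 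Finally, the chain $\liminf_\vep \cF_0(m_\vep,\nabla u_\vep)\le\liminf_\vep\min\cF_\vep$ is off by $O(\vep)$ as written since $\cH(m_\vep(t);\nu)$ can be negative when $\nu(M)>1$; the correct intermediate statement is $\cF_0(m_\vep,\nabla u_\vep)\le\min\cF_\vep+\vep T\log\nu(M)$, which still closes the argument because the error vanishes in the limit.
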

\begin{proof}
%Fix a sequence $\epsilon_n\rightarrow0^+$. 
For every $\vep>0$ we can apply Theorem \ref{weak:thm} and define the couple  $(u_\vep,m_\vep)$ which is the unique weak solution to the problem 
    \begin{equation*}
\label{weak:thm:1}
\begin{cases}
    -\partial_t u+\frac{1}{2}|\nabla u|^2=\epsilon (\log(m)+V(x) )\quad &\text{in $Q_T $}\\
    \partial_t m-div_g(m \nabla u)=0 &\text{in $Q_T$}\\
    m(0) =m_{0 }, \quad m(T )=m_{1 } &\text{in $ M $}
\end{cases}
\end{equation*}
with $\int_ M  u_\vep(T)m_{1}=0$.  

By Lemma  \ref{stime}, we have that  $u_\vep$ is bounded in $L^2((a,b),H^1( M ))$ and in $ L^\infty((a,b)\times M)$ for every $0<a<b<T$, so it is weakly relatively compact in $L^2((a,b); H^1( M ))$.

%So there exists, up to a subsequence, a weak limit $u$ for $\{u_n\}$ in $L^2((a,b); H^1( M ))$. 
For any sequence extracted out of $u_\vep$, by  a diagonal process we can select (and fix) a function $u\in L^2_{loc}((0,T); H^1( M ))\cap L^\infty_{loc}((0,T)\times M)$ and a subsequence  (that we will not rename) such that $u_\vep$ converges weakly to $u$ in $L^2((a,b); H^1( M ))$ for every $0<a<b<T$. 
 
Furthermore, as a consequence of  estimate \rife{glob-ene},  we have $d_W(m_\vep(t),m_\vep (s))\leq C\sqrt{t-s}$ for any $t>s$ and some $C>0$, where $d_W$ is the Wasserstein distance.  Hence, by Ascoli-Arzela's theorem, there exists $m\in C([0,T];\cP(M))$ such that, up to a subsequence,  $m_\vep(t)\to m(t)$ in the Wasserstein topology, uniformly in time. Since $m_0, m_1$ have finite entropy, by estimate \rife{bound:semi_conv:log}, we have that $\into m_\vep(t)\log(m_\vep(t))$ is uniformly bounded in $(0,T)$.
We deduce that $m_\vep$ is weakly relatively compact in $L^1(Q_T)$ and then, by Schwartz  inequality and \rife{glob-ene}, $m_\vep \nabla u_\vep$ is also weakly relatively compact in $L^1(Q_T)$. In particular, there exists $m\in C([0,T];\cP(M))$, and $w\in L^1(Q_T)$,  such that
\begin{align*}
m_\vep &\to m \qquad \hbox{in $C([0,T];\cP(M))$ and weakly in $L^1(Q_T)$,} \\
m_\vep \nabla u_\vep  & \to w\qquad \hbox{ weakly in $L^1(Q_T)$.}
\end{align*}
We now identify $w$ as $m\nabla u$. To this goal, we first use the semi-continuity for the function  $\Psi$ defined in \rife{BB}, and we get
\be\label{rate}
\iinto \frac{|w|^2}{2m}dxdt \leq \liminf_{\vep \to 0} \iinto \frac12 m_\vep |\nabla u_\vep|^2dxdt = \into u_\vep (0)\, m_0\, dx + O(\vep)
\ee
where we used the bound on the entropy. By Lemma \ref{stab-trace} we deduce
\be\label{solita-half}
\begin{split} 
\iinto \frac{|w|^2}{2m}dxdt&  \leq \liminf_{\vep \to 0} \iinto \frac12 m_\vep |\nabla u_\vep|^2dxdt  \\ & \leq  \limsup_{\vep \to 0}  \into u_\vep (0)\, m_0\, dx
\leq \into u(0)m_0\, dx\,.
\end{split}
\ee
Notice that this inequality also implies that $w=0$ a.e. in the set $\{m=0\}$. Setting $v:= \frac{w}m \1_{\{m>0\}}$ we deduce  that $(m,v)$ is a solution to \rife{conteq}.  We also get 
from Lemma \ref{stab-trace} a similar inequality at $t=T$, namely
$$
\into u(T)m_1\, dx\leq \liminf_{\vep \to 0}  \into u_\vep (T)\, m_1\, dx=0\,.
$$
We insert this information into \rife{vecchia-OPS} (where $\vep=0, s=0, t=T$) and we get
$$
\into u(0)m_0\, dx \leq \iinto [\frac wm\scalg \nabla u-\frac{| \nabla u|^2}2] \,m\, dx\leq \iinto \frac{|w|^2}{2m}\, dxdt\,. 
$$
Combining  this with  \rife{solita-half} we conclude that $w= m \, \nabla u$ and that
$$
\into u(0)m_0\, dx =  \iinto \frac12 m  |\nabla u |^2dxdt \,, 
$$
and then 
$$
 \into u_\vep (0)\, m_0\, dx \to \into u(0)m_0\, dx\,,
 $$
 and
 $$
  \iinto \frac12 m_\vep |\nabla u_\vep|^2dxdt\to \iinto \frac12 m  |\nabla u |^2dxdt\,.
$$
We now show that $(m,\nabla u)$ is a minimum of $\cF_0$. To this goal, we recall (see \cite{Erasquin-McCann}, \cite{McCann2}) that the minimum of $\cF_0$ is attained at a unique geodesic $\mu^*$ such that $\mu^*(t)\in L^1(M)$ for every $t$ and $\mu^*(\cdot)$ is   continuous in the weak-$L^1$ topology. On account of Remark \ref{vep0rem}, we can use inequality  \rife{vecchia-OPS} with $\vep =0$ and $\mu=\mu^*$, which yields:
\begin{align*}
\iinto \frac12 m  |\nabla u |^2dxdt & = \into u(0)m_0\, dx \leq \int_0^T \!\!\! \into  [\mu^*\, v\, \scalg \nabla u  -  \frac12  |\nabla u  |^2\, \mu^*]\, dxdt
\\ & \leq  \int_0^T \!\!\! \into  \frac12 |v|^2\, \mu^*\, dxdt = \min \mathcal{F}_0
\end{align*}
Hence $(m,\nabla u)$ is the minimum point of $\cF_0$ and coincides with the unique geodesic between $m_0, m_1$ (notice that the previous inequality implies $v=\nabla u$). 
Finally, we observe that, still using \rife{vecchia-OPS} for $(m_\vep, u_\vep)$ and $(m,u)$, we have
\begin{align*}
 \min \mathcal{F}_\epsilon   
 %=  \iinto \frac12 m_\vep |\nabla u_\vep|^2dxdt +  \vep \iinto  m_\vep ( \log m_\vep+ V)dxdt
 & = \into u_\vep (0)\, m_0\, dx 
 %- \into u_\vep (T)\, m_1\, dx +  \vep \iinto  m_\vep ( \log m_\vep+ V)dxdt
\\ & \leq   \iinto \frac12 m  |\nabla u |^2dxdt  + \vep \iinto m(\log(m_\vep)+ V) \, dxdt
\\ & \leq  \min \mathcal{F}_0  + \vep \iinto m\log (m) + C\, \vep =  \min \mathcal{F}_0 + O(\vep)
\end{align*}
due to the fact that $m$ has finite entropy. Since the opposite inequality is obviously true, we conclude  with \rife{rateeps}.
\end{proof}

As a byproduct of the previous result, we have proved that whenever $m_0, m_1$ have finite entropy, then the Wasserstein geodesic connecting $m_0, m_1$ has finite entropy for all times. In fact, using Corollary \ref{corollary:semi_conv:log} we also have a quantitative estimate of the semiconvexity of the  log-entropy functional  along the geodesic.
In this way, we recover a result proved by Daneri and Savar\'e (\cite{Daneri_2008}) with a different and independent approach. We point out that we can avoid the use of this semiconvexity property of the geodesic in all our estimates and stability results (see also Remark \ref{dasoli}), so that  this is just  deduced from the limit $\vep\to0$ of the semiconvexity of the optimal curves of $\cF_\vep$. 
%Finally we note that we can recover the semiconvexity of the log-entropy functional along the 2-Wasserstein geodesics: we point out that the usage of this result in the proof of Proposition \ref{prop:exist_comp} (already proven in \cite{Daneri_2008}) can be substituted by a straightful application of Theorem \ref{weak:thm} restricted to the case of smooth and strictly positive marginals and Corollary \ref{corollary:semi_conv:log}.

\begin{corollary}
    In the assumptions of Theorem \ref{thm:conv_vep}, let $\Lambda\in \R$ be such that
    $$
    Ric_g + D^2 V \geq \Lambda \, I_g 
    $$
    in the sense that $(Ricc_g+ D^2 V)(X,X)\ge  \Lambda \abs{X}^2$ for every vector field $X$ on $M$.
    
    Then the relative entropy functional $\cH(m ; \nu)$ is $\Lambda$-convex along the 2-Wasserstein geodesics. In other words, if $m:[0,1]\to\cP( M )$ is the  geodesic between $m_0$ and $m_1$, it holds
\be\label{saragiu}
   \cH(m(t) ; \nu)\leq t \cH(m_1 ; \nu)+(1-t)\cH(m_0 ; \nu)-\frac\Lambda2 t(1-t)W_2^2(m_0,m_1) 
\ee
for every $t\in[0,1]$.
\end{corollary}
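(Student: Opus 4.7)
My plan is to deduce \rife{saragiu} by passing to the limit $\vep\to 0$ in the displacement convexity inequality \rife{BK+displa} along the smooth optimal curves of $\cF_\vep$. Without loss of generality I may assume $\cH(m_0;\nu), \cH(m_1;\nu) < +\infty$, otherwise \rife{saragiu} holds trivially. First, I approximate $m_0, m_1$ by sequences $m_{0,n}, m_{1,n}$ of smooth, strictly positive probability densities on $M$, obtained e.g.\ via the heat semigroup $S_{1/n}$, chosen so that $m_{i,n} \to m_i$ in $L^1(M)$ and $\cH(m_{i,n};\nu) \to \cH(m_i;\nu)$ as $n\to\infty$. By Theorem \ref{smoothex} (applied with $T=1$) there exists a smooth optimal pair $(m^{n,\vep}, \nabla u^{n,\vep})$ for $\cF_\vep$ joining $m_{0,n}$ to $m_{1,n}$.

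Applying \rife{BK+displa} to $(m^{n,\vep}, u^{n,\vep})$ under the hypothesis $Ric_g + D^2 V \geq \Lambda I_g$ and discarding the nonnegative Fisher-information term, the function $\phi_{n,\vep}(t)\coloneqq \cH(m^{n,\vep}(t);\nu)$ satisfies $\phi_{n,\vep}''(t) \geq \Lambda \into m^{n,\vep}(t) |\nabla u^{n,\vep}(t)|^2\, dx$ on $(0,1)$, with boundary values $\phi_{n,\vep}(0) = \cH(m_{0,n};\nu)$ and $\phi_{n,\vep}(1) = \cH(m_{1,n};\nu)$. Integrating against the Green function $G(t,s)$ of $-\partial_{tt}$ on $[0,1]$ with Dirichlet conditions (so that $\int_0^1 G(t,s)\,ds = t(1-t)/2$) yields
\begin{equation}\label{planGreen}
\phi_{n,\vep}(t) \leq (1-t)\cH(m_{0,n};\nu) + t\, \cH(m_{1,n};\nu) - \Lambda \int_0^1 G(t,s) \into m^{n,\vep}(s)|\nabla u^{n,\vep}(s)|^2 \, dx\, ds.
\end{equation}
The decisive step is to identify the limit of $\into m^{n,\vep}(t) |\nabla u^{n,\vep}(t)|^2\, dx$ as $\vep\to 0$: by Lemma \ref{HS} this quantity equals $2E^{n,\vep} + 2\vep\, \phi_{n,\vep}(t)$ with $E^{n,\vep}$ constant in $t$, and the explicit expression of $E^{n,\vep}$ in terms of $\min\cF_\vep$ and $\iinto m^{n,\vep}\log m^{n,\vep}$ from the same lemma, combined with the uniform bound on $\phi_{n,\vep}$ provided by \rife{bound:semi_conv:log} and the convergence $\min \cF_\vep \to \min \cF_0 = \tfrac12 W_2^2(m_{0,n}, m_{1,n})$ of Theorem \ref{thm:conv_vep}, forces $\into m^{n,\vep}(t) |\nabla u^{n,\vep}(t)|^2 \, dx \to W_2^2(m_{0,n}, m_{1,n})$ uniformly in $t\in[0,1]$.

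Combined with the convergence $m^{n,\vep}(t) \to m^{(n)}(t)$ weakly in $L^1(M)$ towards the Wasserstein geodesic between $m_{0,n}, m_{1,n}$ (Theorem \ref{thm:conv_vep}) and the lower semicontinuity of $\cH(\cdot;\nu)$, passing $\vep\to 0$ in \eqref{planGreen} produces the $\Lambda$-convexity inequality for $m^{(n)}$. A final limit $n\to\infty$, via the stability of Wasserstein geodesics with respect to their endpoints (which gives $m^{(n)}(t) \to m(t)$ in $\cP(M)$), continuity of $W_2$, the chosen convergence $\cH(m_{i,n};\nu) \to \cH(m_i;\nu)$, and once more lower semicontinuity of $\cH(\cdot;\nu)$ on the left-hand side, delivers \rife{saragiu}. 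The main obstacle is the uniform-in-$t$ convergence of $\into m^{n,\vep} |\nabla u^{n,\vep}|^2 \,dx$: merely knowing that $m^{n,\vep}\nabla u^{n,\vep}$ converges to $m^{(n)}\nabla u^{(n)}$ weakly in $L^1$ would not suffice to control \eqref{planGreen} pointwise in $t$, so one genuinely needs the time-conservation of $E^{n,\vep}$ (Lemma \ref{HS}) coupled with the entropy bound \rife{bound:semi_conv:log}.
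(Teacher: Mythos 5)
Your proof is correct and follows essentially the same route as the paper: apply the displacement convexity inequality \rife{BK+displa} to the smooth optimal curves of $\cF_\vep$ for smooth positive approximations of the marginals, use the conservation of the Hamiltonian (Lemma \ref{HS} / \rife{bound:E}) together with the entropy bound \rife{bound:semi_conv:log} and the convergence $\min\cF_\vep\to\tfrac12 W_2^2$ to identify the limit of $\into m_\vep|\nabla u_\vep|^2$, and conclude by weak lower semicontinuity of the entropy. Your Green-function formulation and the explicit second limit $n\to\infty$ are just slightly more detailed versions of the paper's "replace $\phi''$ by the constant $\Lambda_\vep$ and note stability under approximation of the marginals".
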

\begin{proof}
%    Without loss of generality we can suppose that $\mathcal{E}(m_0),\mathcal{E}(m_1)<+\infty$. 
As in Theorem \ref{thm:conv_vep},  let $(u_\vep, m_\vep)$ be the sequence of solutions of \rife{opsys}, with $\int_ M  u_\vep(T)m_{1}=0$. At first, let us suppose that $m_0, m_1$ are smooth and positive, so that $u_\vep, m_\vep$ are smooth solutions (Theorem \ref{smoothex}). By \rife{BK+displa} in Corollary \ref{corollary:semi_conv:log}, we have
    \begin{align*}
    \frac{d^2}{dt^2} \cH(m_\vep(t); \nu) & \geq \Lambda \into m_\vep\, |\nabla u_\vep |^2dx  \\
    & = 2\Lambda {\mathcal B}_\vep(m_0, m_1) + 2\Lambda \vep \cH(m_\vep(t); \nu)
     \end{align*}
     where we used \rife{bound:E} (with $T=1$) and, we recall,  ${\mathcal B}_\vep(m_0, m_1)= \min(\cF_\vep)$.  Since $m_0, m_1$ have finite entropy, we already know from \rife{bound:semi_conv:log} that $ \cH(m_\vep(t); \nu)$ is bounded independently of $\vep$, for every $t\in (0,T)$. Hence we get, for some $C>0$:
     $$
      \frac{d^2}{dt^2} \cH(m_\vep(t); \nu) \geq \Lambda_\vep:= 2\Lambda {\mathcal B}_\vep(m_0, m_1) - C\, \vep\,
     $$ 
    which implies
\be\label{saragiu2}
     \cH(m_\vep(t) ; \nu)\leq t \cH(m_1 ; \nu)+(1-t)\cH(m_0 ; \nu)- \frac{\Lambda_\vep}2 t(1-t) \quad\forall t\in([0,1].
\ee
Note that $\Lambda_\vep$ is stable by approximation of $m_0, m_1$ with smooth positive densities, due to Theorem \ref{weak:thm}, so the above inequality holds for any $m_0, m_1$ with finite entropy. Finally, we let $\vep \to 0$; by Theorem \ref{thm:conv_vep} we know that $m_\vep(t)$ weakly converges in $L^1(M)$ towards $m(t)$, where $m$ is the Wasserstein geodesic between $m_0, m_1$.  Thus we can use the weak lower semicontinuity of the entropy for the left-hand side. We also know that  $ {\mathcal B}_\vep(m_0, m_1)= \min(\cF_\vep)$ converges towards $ \min(\cF_0)=\frac12 W_2^2(m_0,m_1)$.  Hence $\Lambda_\vep \to      \Lambda \, W_2^2(m_0,m_1)$ and from \rife{saragiu2} we deduce \rife{saragiu}.
%for a constant $C$ independent from $\vep$. By \eqref{bound:E_B} and Theorem \ref{thm:conv_vep} we can pass to the limit for $\vep\to0$ and obtain
%     \begin{align*}
%         \mathcal{E}(m(t))\le(1-t)\mathcal{E}(m(0))+t\mathcal{E}(m_1)-\lambda t(1-t)\min \mathcal{F}(m_0,m_1).
%     \end{align*}
%     which is the corollary since $\min \mathcal{F}(m_0,m_1)=\frac12 W_2^2(m_0,m_1)$.
\end{proof}

We conclude by extending the convergence result of Theorem \ref{thm:conv_vep} to  marginals which only belong to $L^1(M)$, without having necessarily finite entropy.
It is known that, if $m_0, m_1\in L^1(M)$, then the Wasserstein geodesic belongs to $L^1(M)$ for all times $t$, see e.g. \cite{Erasquin-McCann}, \cite{McCann2}. This is also a byproduct of our next result, since we will prove that \rife{converge} still holds for merely $L^1$ marginals.  To get the necessary equi-integrability for this goal,  we will use an idea suggested to us by G. Savar\'e, based  on displacement convexity and the following lemma essentially contained in \cite{Villani-oldnew}.

\begin{lemma}\label{giuseppe-villani} Let $m_0\in L^1(M)$. Then there exists a function $U: [0,\infty)\to \R^+$ such that:

(i)  $U\in C^2(0,\infty)$,  is convex and satisfies $\frac{U(r)}r \mathop{\to}\limits^{r\to \infty} +\infty$.

(ii) $ P'(r)r - (1-\frac1d) P(r) \geq 0$, and $ P(r) \leq K\, r $ for every $ r>0$, where $P(r)=U'(r)r-U(r)$, $K>0$.

(iii) $U(m_0) \in L^1(M)$.
\end{lemma}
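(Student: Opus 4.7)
My plan is to search for $U$ in the form $U(r)=rV(r)$ with $V:(0,\infty)\to [0,\infty)$ non-decreasing and $C^2$, which decouples conditions (i)--(iii) cleanly. With this ansatz $P(r)=U'(r)r-U(r)=r^2 V'(r)$, and one verifies directly that the bound $P(r)\le Kr$ is equivalent to $V'(r)\le K/r$, while the displacement-convexity inequality $P'(r)r-(1-\tfrac{1}{d})P(r)\ge 0$ is equivalent to the monotonicity of $r\mapsto r^{1+1/d}V'(r)$. Convexity of $U$ then follows automatically because $U''(r)=2V'(r)+rV''(r)\ge (1-\tfrac{1}{d})V'(r)\ge 0$, condition (i) reduces to $V(r)\to\infty$, and (iii) reduces to $\int_M m_0\,V(m_0)\,dx<\infty$.

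The substance of the lemma is therefore the construction of such a $V$ tailored to the given $m_0$, combining a de la Vall\'ee Poussin-type argument with a monotone envelope. Since $m_0\in L^1(M)$, the non-increasing function $F(s):=\int_{\{m_0>s\}} m_0\,dx$ tends to $0$ as $s\to\infty$, and a classical construction (pick $s_n\uparrow\infty$ with $F(s_n)\le 2^{-n}$ and interpolate smoothly with monotone-decreasing slopes) produces a non-decreasing concave $\phi\in C^1$ with $\phi(s)\to\infty$ and $\int_0^\infty \phi'(s)F(s)\,ds<\infty$. By a layer-cake identity this equals $\int_M m_0\,\phi(m_0)\,dx-\phi(0)$, so $\int m_0\phi(m_0)<\infty$. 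After replacing $\phi$ by $\min(\phi(s),K\log(s+e))$, which still diverges, I may assume $\phi'(s)\le K/(s+e)$ for every $s\ge 0$.

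Finally, to upgrade $\phi$ to a $V$ satisfying the monotonicity of $r\mapsto r^{1+1/d}V'(r)$, I write $V'(r)=\chi(r)/r$ so the remaining constraints read $\chi\le K$ and $r^{1/d}\chi(r)$ non-decreasing, and I define $\chi$ via the non-decreasing envelope
\[ r^{1/d}\chi(r):=\sup_{1\le s\le r}s^{1+1/d}\phi'(s), \qquad V(r):=\int_1^r \chi(\rho)/\rho\,d\rho+V(1). \]
By construction $r^{1/d}\chi(r)$ is non-decreasing, the bound on $\phi'$ gives $\chi\le K$ and hence $V'(r)\le K/r$, and $V\to\infty$ because $\phi\to\infty$ forces the envelope to grow enough for $\int \chi(\rho)/\rho\,d\rho$ to diverge. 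A final mollification produces $V\in C^2$ without affecting the estimates. The main obstacle is the last step: ensuring that the envelope operation does not enlarge $V$ too much compared to $\phi$, so as to preserve the integrability $\int m_0 V(m_0)\le C\int m_0\phi(m_0)<\infty$. The way to handle this is to arrange in the second step that $\phi$ be concave enough so that $s\mapsto s^{1+1/d}\phi'(s)$ is already non-decreasing, in which case the envelope operation is trivial and $V=\phi$ up to an additive constant; the concavity of $\phi$ coming from the piecewise-linear interpolation with monotone-decreasing slopes is exactly what makes this work.
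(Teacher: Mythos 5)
Your reduction in the first paragraph is correct and is in fact a cleaner route than the paper's: with $U(r)=rV(r)$ one has $P(r)=r^2V'(r)$, the condition $P'(r)r-(1-\tfrac1d)P(r)\ge 0$ is exactly the monotonicity of $r\mapsto r^{1+1/d}V'(r)$, $P\le Kr$ is $V'\le K/r$, and convexity, superlinearity and integrability decouple as you say. The paper instead builds a superlinear $\Psi$ with $\Psi''(s)\le C/s$ by a de la Vall\'ee Poussin argument and then invokes Villani's Proposition 17.7 (the substitution $u(r)=r^N\Psi(r^{-N})$ followed by a lower convex envelope), with the extra work going into showing that the envelope does not destroy the bound $P(r)\le Kr$. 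Your plan, if completed, avoids the convex-envelope computation entirely.

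However, the final step has a genuine gap, precisely at the point you flag as the main obstacle. Your proposed resolution — that the concave piecewise-linear $\phi$ from the de la Vall\'ee Poussin construction automatically has $s\mapsto s^{1+1/d}\phi'(s)$ non-decreasing, so that the envelope is trivial — is false. Concavity means $\phi'$ is non-increasing, which works \emph{against} the required monotonicity: for the step-function derivative $\phi'\equiv d_{n+1}$ on $[x_n,x_{n+1})$ with $d_{n+1}$ decreasing, the function $s^{1+1/d}\phi'(s)$ increases inside each block but drops at every node, and a smooth concave example such as $\phi'(s)=s^{-2}$ gives $s^{1+1/d}\phi'(s)=s^{1/d-1}$, which is decreasing. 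So the envelope operation is genuinely nontrivial, and one must then control $V-\phi$. Estimating each ``excursion'' of the running supremum (using $s\phi'(s)\le K$) shows that every node contributes up to $dK$ to $V$, so the natural bound is $V(r)\le\phi(r)+CK\log r$; since $m_0$ is only assumed to be in $L^1(M)$ (this lemma is used exactly when $m_0$ has infinite entropy), the term $\int_M m_0\log m_0$ is not controlled, and (iii) does not follow. The gap is fixable inside your framework by changing the interpolation rather than appealing to concavity: on each block $[x_n,x_{n+1})$ set $\phi'(s)=c_n\,s^{-1-1/d}$ with $c_n=K x_n^{1/d}$ (non-decreasing) and $x_{n+1}\ge 2^d x_n$ chosen so that $F(x_n)=\int_{\{m_0>x_n\}}m_0\,dx\le n^{-2}$. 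Then $s^{1+1/d}\phi'(s)=c_n$ is non-decreasing by construction, $\phi'(s)\le K/s$, each block adds at least $dK/2$ to $\phi$ so $\phi\to\infty$, and $\int_{x_n}^{x_{n+1}}\phi'(s)F(s)\,ds\le dK\,F(x_n)\le dK n^{-2}$ gives (iii). With that replacement (and the routine mollification) your argument closes and yields the lemma by a route independent of Villani's Proposition 17.7.
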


Even if the above Lemma  mostly  follows from  \cite[Proposition 17.7]{Villani-oldnew} combined with De la Vall\'ee Poussin lemma, we will give a proof in the Appendix, for the reader's convenience.  Standing on Lemma \ref{giuseppe-villani}, we will show that $ \min\mathcal{F}_{\epsilon}(m_0,m_1) \to  \min\mathcal{F}_0(m_0, m_1)$, although now the rate of convergence appears to be of order $O(\vep \log \vep)$. We will also prove a  further property here, namely that, up to approximating $m_0, m_1$ with suitable smooth sequences $m_{0\vep}, m_{1\vep}$, we can build a minimizing curve of $\mathcal{F}_{\epsilon}$ which is a smooth approximation of the Wasserstein geodesic, with the adjoint states uniformly converging to the Kantorovich potentials. 

\begin{theorem}\label{conv_vep2}
%    Let $ M $ be a $C^3$, bounded, convex domain in $\R^d$ or a compact Riemannian manifold without boundary and let $V\in W^{1,\infty}( M ), m_0, m_1 \in L^\infty( M )\cap\mathcal{P}( M ).$ If $ M $ is a Riemannian manifold without boundary, suppose that it has Ricci curvature bounded from below.\\% or that there exist a $\epsilon_0>0$ such that $\frac{1}{\epsilon}\nabla^2 V+Ricc_g\ge0$ for every $\epsilon\le\epsilon_0$.\\
Let $V\in W^{2,\infty}( M )$, and $m_0, m_1 \in \mathcal{P}( M )\cap L^1(M)$. For $\vep \in (0,1)$, let $(m_\vep, u_\vep)$ be the unique solution of \eqref{weak:1st_order} given by Theorem \ref{weak:thm}, and  $m$ be  the   Wasserstein geodesic between $m_0, m_1$, with $(m,\nabla u)$ the  minimum of $\mathcal{F}_0$.
%$(m_\vep, \nabla u_\vep)$ the unique minima of $\cF_\vep$.
Then we have that  \rife{converge} holds true and  $\min \mathcal{F}_0=\lim\limits_{\epsilon\rightarrow0} \min \mathcal{F}_\epsilon$.
In particular, we have   
\be\label{rateeps2}
\min \mathcal{F}_0-c_0 \vep \leq \min \mathcal{F}_\epsilon \leq  \min \mathcal{F}_0 + c_1 \, \vep |\log \vep |\,
\ee
for some $c_0, c_1>0$.

%where the $\mathcal{F}_\epsilon$ are the functionals defined by \eqref{functional}.
In addition, there exist   sequences $m_{0\vep}, m_{1\vep}$, converging respectively to $m_0, m_1$ in $L^1(M)$, such that:

(i) $(m_\vep, \nabla u_\vep):= {\rm argmin} \mathcal{F}_\epsilon $ is smooth in $(0,T)\times M$.

(ii) $u_\vep$ is bounded in $W^{1,\infty}(Q_T)$   and converges uniformly to a   Lipschitz continuous solution $u$ of the Hamilton-Jacobi equation $\partial_t u = |\nabla u |^2/2$ in $Q_T$.

(iii) $m_\vep \to m$ in $C^0([0,T],\cP(M))$ where $m$ is the Wasserstein geodesic connecting $m_0, m_1$, 
with $\nabla u$ being the metric velocity of the geodesic and $u(0), u(T)$   the Kantorovich optimal potentials. 
\end{theorem}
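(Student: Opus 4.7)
The plan decomposes into three parts: extend the convergence result of Theorem \ref{thm:conv_vep} to marginals without finite entropy by establishing $L^1$-equi-integrability of $\{m_\vep\}$; establish the two-sided rate \rife{rateeps2}; and construct the smooth approximating sequences together with a uniform Lipschitz bound on the adjoint state.

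For the equi-integrability, which is the only missing ingredient compared to the proof of Theorem \ref{thm:conv_vep}, I invoke Lemma \ref{giuseppe-villani} to pick a superlinear convex $U$ with $U(m_0),U(m_1)\in L^1(M)$, $P'(r)r-(1-1/d)P(r)\ge 0$, and $P(r):=U'(r)r-U(r)\le Kr$. Applying Proposition \ref{prop:conv} to smooth minimizers $(u_\vep,m_\vep)$ (obtained from positive smooth approximations of $m_0,m_1$), and integrating the $\nabla V$ term by parts, yields
\begin{equation*}
\frac{d^2}{dt^2}\int_M U(m_\vep)\,dx \ge -\lambda K\int_M m_\vep|\nabla u_\vep|^2\,dx - \vep K\|\Delta_g V\|_\infty,
\end{equation*}
since the trace and Fisher contributions in \rife{disco} are nonnegative. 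As $\iinto m_\vep|\nabla u_\vep|^2\le C$ uniformly (via Proposition \ref{prop:exist_comp} and Jensen's inequality), the right-hand side has uniformly bounded $L^1(0,T)$-norm, and a Green's function estimate for $-d^2/dt^2$ on $[0,T]$ gives
\begin{equation*}
\int_M U(m_\vep(t))\,dx\le \int_M U(m_0)\,dx+\int_M U(m_1)\,dx+C
\end{equation*}
uniformly in $t$ and $\vep$; this provides equi-integrability of $\{m_\vep\}$ in $L^1(Q_T)$, with convexity of $U$ making the bound stable under approximation of $m_0,m_1$ by heat-smoothed densities. The rest of \rife{converge} and the identification of $(m,\nabla u)$ as the Wasserstein geodesic then follows verbatim from the proof of Theorem \ref{thm:conv_vep} via Lemma \ref{stab-trace}.

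The lower bound in \rife{rateeps2} is immediate from Jensen's inequality $\cH(m;\nu)\ge -\log\nu(M)$. For the upper bound I build a competitor in the spirit of Proposition \ref{prop:exist_comp}: heat-smooth $m_0,m_1$ for time $\de$ to obtain $\tilde m_0^\de,\tilde m_1^\de$; connect $m_i$ to $\tilde m_i^\de$ via the reparametrized heat flow on $[0,\de]$ and $[T-\de,T]$ (kinetic cost $O(\de^{1-1/\beta})$ and entropic cost $O(\vep\de^{1/\beta}|\log\de|)$ for some $\beta>1$, as computed in Proposition \ref{prop:exist_comp}); and use the 2-Wasserstein geodesic between $\tilde m_0^\de,\tilde m_1^\de$ on $[\de,T-\de]$, whose kinetic cost tends to $\min\cF_0$ since $W_2(\tilde m_i^\de,m_i)\to 0$ and whose entropy is bounded by $O(|\log\de|)$ via the $(-\lambda)$-displacement convexity recovered in the Corollary after Theorem \ref{thm:conv_vep}. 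A choice $\de\sim\vep^2$ balances the errors and delivers $\min\cF_\vep\le\min\cF_0+c_1\vep|\log\vep|$.

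For the last statement I take $m_{i\vep}:=S_\vep m_i$; these are smooth, strictly positive, and converge to $m_i$ in $L^1(M)$, so Theorem \ref{smoothex} provides smooth minimizers $(m_\vep,\nabla u_\vep)$. The key estimate is a uniform $W^{1,\infty}$ bound on $\hat u_\vep:=u_\vep-\int u_\vep(T)m_{1\vep}$. The Gaussian lower bound for the heat kernel on the compact manifold gives $m_{i\vep}\ge c\,e^{-C/\vep}$, hence $\vep\log m_{i\vep}\ge -C$ uniformly; the trivial upper bound $m_{i\vep}\le C\vep^{-d/2}$ yields $\vep\log m_{i\vep}\le O(\vep|\log\vep|)$; and the Li--Yau inequality (as in \rife{bound:heat_comp:vel}) gives $|\nabla\log m_{i\vep}|\le C\vep^{-1/2}$, so $\vep\nabla\log m_{i\vep}\to 0$ uniformly. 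Then \rife{bound:unif_u:pos_marg} yields a uniform $L^\infty$ bound on $\hat u_\vep$, and Theorem \ref{thm:Lip_u_par} upgrades this to a uniform $W^{1,\infty}$ bound. A uniformly convergent subsequence produces a Lipschitz limit $u$ which solves $-\partial_t u+|\nabla u|^2/2=0$ in the viscosity sense (since the source $\vep(\log m_\vep+V)$ is uniformly bounded in $L^\infty_{\mathrm{loc}}$ and, by the uniform Lipschitz control of $u_\vep$ together with the identification of the limit, tends to $0$ locally in $L^p$); combining with the first part, $m_\vep\to m$ (the Wasserstein geodesic) and optimal-transport duality forces $u(0)$ and $u(T)$ to be the Kantorovich potentials. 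The main obstacle is the upper bound construction in \rife{rateeps2}, which requires a delicate balance of four competing contributions that go in opposite directions as $\de\to 0$ and relies on the $(-\lambda)$-displacement convexity of the entropy along Wasserstein geodesics.
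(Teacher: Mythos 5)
Your proposal follows the same overall architecture as the paper: equi-integrability of $\{m_\vep(t)\}$ via Proposition \ref{prop:conv} with the function $U$ from Lemma \ref{giuseppe-villani}, a competitor argument for the rate, and the heat-smoothed marginals $m_{i\vep}=S_\vep m_i$ combined with a uniform $W^{1,\infty}$ bound on $\vep\log m_{i\vep}$ to obtain the uniform Lipschitz bound on $\hat u_\vep$. Your Green's-function reformulation of the one-sided ODE comparison for $\vfi_\vep(t)=\int_M U(m_\vep(t))$ is a cleaner way to state what the paper compresses into an appeal to the $H^1\hookrightarrow C^0$ embedding, and integrating the $\nabla V$ term by parts (rather than absorbing it into the Fisher information) is a perfectly valid variant. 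For the rate \rife{rateeps2} you in fact supply more detail than the paper does, and your competitor (heat-flow legs glued to the Wasserstein geodesic between $\tilde m_0^\de,\tilde m_1^\de$, then balance $\de$ against $\vep$) is sound in spirit, although the specific exponents you quote for the leg costs are a bit off: the kinetic cost of the reparametrized heat leg is $O(\de^{\beta-1})$ (not $O(\de^{1-1/\beta})$) and its entropic cost is $O(\vep\,\de\,|\log\de|)$ (not $O(\vep\,\de^{1/\beta}|\log\de|)$); this does not change the final balance, which still yields $O(\vep|\log\vep|)$.

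There is, however, a genuine gap in the step that delivers the uniform Lipschitz bound. You assert that ``the Li--Yau inequality (as in \rife{bound:heat_comp:vel}) gives $|\nabla\log m_{i\vep}|\le C\vep^{-1/2}$''. But \rife{bound:heat_comp:vel} is an \emph{integrated} Fisher-information bound, $\int_M |\nabla\log\mu_0|^2\,\mu_0\,dx\le C+2d/t$; it is an $L^2(\mu_0\,dx)$ bound on $\nabla\log\mu_0$, not a pointwise one, and does not yield $\|\nabla\log m_{i\vep}\|_{L^\infty}\le C\vep^{-1/2}$. The pointwise Li--Yau differential Harnack inequality $|\nabla\log\mu_0|^2-2\partial_t\log\mu_0\le C+2d/t$ also does not directly give what you want, because the term $\partial_t\log\mu_0$ (which is of variable sign and can be large positive) must still be controlled. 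What the paper actually uses is the refined pointwise gradient estimate of Li--Zhang (Thm 1.8 of \cite{Li-Zhang}, a Hamilton-type estimate on manifolds with Ricci bounded below), which produces $\vep\bigl(|\nabla\log S_\vep(m_0)|+|\log S_\vep(m_0)|\bigr)\le C$ directly; note this gives $|\nabla\log m_{i\vep}|=O(\vep^{-1})$, so that $\vep\,\nabla\log m_{i\vep}$ is \emph{bounded} but need not tend to zero, contrary to your claim. Fortunately, boundedness (not decay) is all that \rife{bound:unif_u:pos_marg} and Theorem \ref{thm:Lip_u_par} require, so the overall argument survives once the correct gradient estimate is invoked in place of the Fisher-information bound.
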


\begin{proof}  Let $U$ be the function given by Lemma \ref{giuseppe-villani} (built replacing $m_0$ with $\max(m_0, m_1)$). Using Proposition \ref{prop:conv}, we have
\begin{align*}
  \frac{d^2}{dt^2}\int_ M  U(m_\vep)\,dx & \ge  \int_ M  P(m_\vep)Ricc_g(\nabla u_\vep,\nabla u_\vep)\,dx -  \vep \int_ M  m_\vep \,  |  \nabla V|^2\,dx 
\\ & \geq -   \lambda \, K\int_ M  m_\vep\,  |\nabla u_\vep|^2 \,dx- c\, \vep\,
\end{align*}
where we used the property (ii) of $U$,  from Lemma \ref{giuseppe-villani}. Setting $\vfi_\vep(t)= \int_ M  U(m_\vep)(t)\,dx$, we deduce that $\vfi_\vep$ satisfies
$$
\begin{cases} -\vfi_\vep''(t) \leq \lambda\, K  f_\vep(t) + c\vep & t\in (0,T)
\\
\vfi_\vep(0) = \int_M U(m_0)\,,  \vfi_\vep(T) = \int_M U(m_1) & \,,
\end{cases}
$$
where $f_\vep:= \int_ M  m_\vep\,  |\nabla u_\vep|^2 \,dx$ is bounded in $L^1(0,T)$ by Lemma \ref{stime}. By the (compact) embedding of $H^1(0,T)$ into $C^0([0,T])$, we deduce that $\vfi_\vep$ is uniformly bounded and actually it is relatively compact in the uniform topology. Since $U$ is superlinear, this means that $m_\vep(t)$ is weakly compact in $L^1(M)$ and weakly converges to $m(t)$, for every $t\in [0,T]$. In addition, $t\mapsto m(t)$ is continuous in $L^1(M)$ endowed with the weak topology.  With this in hand, we also have that $m_\vep\nabla u_\vep$ is weakly compact in $L^1(Q_T)$.  Moreover, from Lemma \ref{stime}, we know that $u_\vep$ weakly converges to some $u\in L^2_{loc}((0,T); H^1( M ))\cap L^\infty_{loc}((0,T)\times M)$, exactly as in Theorem \ref{thm:conv_vep}.  In order to identify the limit of $m_\vep\nabla u_\vep$ as $m\nabla u$, we can  proceed as before,  using \rife{vecchia-OPS} on account of Remark \ref{vep0rem}. 
Thus we obtain the same conclusion \rife{converge} as before. 
However, the rate of convergence \rife{rateeps} does not follow in this case since we can only estimate
$$
\iinto m_\vep \log(m_\vep) \,dxdt = O (\vep|\log\vep|)
$$
from estimate \rife{loc_bound_entropy}. This yields \rife{rateeps2}.

Finally, we observe that we can build a smooth approximation of the Wasserstein geodesic if we approximate $m_0, m_1$ with the heat semigroup, namely  $m_{0\vep}= S_\vep(m_0)$, $m_{1\vep}= S_\vep(m_1)$, where $S_t$ is the heat semigroup as in Proposition \ref{prop:exist_comp}. By using Li-Yau estimates on the heat kernel, in the improved form given e.g. in \cite[Thm 1.8]{Li-Zhang} for Riemannian manifolds with Ricci curvature bounded below, we have that there exists a constant $C$, only depending on $M,d$, such that
$$
\vep \,\left( \frac{|\nabla S_\vep(m_0)|}{S_\vep(m_0)}+ |\log(S_\vep(m_0))|\right) \leq C\,.
$$ 
This means that $\vep \log(m_{0\vep})$ is bounded in $W^{1,\infty}(M)$, and so is for $\vep \log(m_{1\vep})$. From \rife{bound:unif_u:pos_marg} in Lemma \ref{stime} we deduce that $u_\vep$ is uniformly bounded, and then from Theorem \ref{thm:Lip_u_par} $u_\vep$ is bounded in Lipschitz norm. Moreover, at fixed $\vep$, $(u_\vep, m_\vep)$ are smooth according to Theorem \ref{smoothex}.  Finally, by Ascoli-Arzel\'a's theorem,  $u_\vep$  is relatively compact in $C^0(\overline Q_T)$ and converges uniformly towards its limit $u$. It is a standard result that $u$ is a viscosity solution of the Hamilton Jacobi equation $\partial_t u=\frac12 |\nabla u|^2$.
\end{proof}

{ \section{Appendix A: existence of smooth solutions}}
Here we show the existence of solutions to  the differential system
\begin{equation}
\label{elliptic:2nd_order:app}
\begin{cases}
    -tr\left(\mathcal{A}(x,\nabla u)\circ\overline{\nabla}^2u\right)+\rho u+\vep \nabla u\scalg \nabla V(x)=0 &\text{in $Q$}\\
    -\partial_t u+\frac{1}{2}\abs{\nabla u}^2=\delta u+\vep (\log(m_1)+V(x)) &\text{in $\Sigma_T$}\\
    -\partial_t u+\frac{1}{2}\abs{\nabla u}^2+\delta u= \vep( \log(m_0)+V(x) )&\text{in $\Sigma_0$}\\
\end{cases}
\end{equation}
and we recall (see \rife{elliptic:long}) that the expanded form of the first equation is
\begin{multline}
\label{elliptic:long:app}
    -\partial_{tt} u+ 2 \nabla (\partial_t u) \scalg \nabla u-\vep\Delta_g u-  (\nabla^2u)(\nabla u, \nabla u)
    +\rho u+\vep \nabla u\scalg \nabla V=0.
\end{multline}

\begin{proposition}\label{teo-rho}Let $\rho,\delta, \vep>0$. Assume that $V\in W^{2,\infty}(M)$ and $m_0,m_1\in\cP(M)\cap C^{1,\alpha}(M)$ with $m_0,m_1>0$ in $M$.\\
Then there exists a  classical solution $u\in C^{2,\alpha}(\overline Q_T)$ of the quasilinear elliptic problem \rife{elliptic:2nd_order:app}.
\end{proposition}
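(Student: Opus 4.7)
The plan is to apply the continuity method in the framework developed by Lions \cite{L-college} and adapted to this setting in \cite{Mu, Po}. I would embed \rife{elliptic:2nd_order:app} in a one-parameter family of quasilinear problems $(P_\sigma)_{\sigma \in [0,1]}$, for instance by freezing the gradient inside the matrix as $\mathcal{A}(x, \sigma \nabla u)$ and multiplying by $\sigma$ the quadratic term $\frac{1}{2}|\nabla u|^2$ and the data $\vep(\log m_i + V)$ in the boundary conditions at $\Sigma_0, \Sigma_T$. At $\sigma = 0$ the reduced problem is the linear equation $-\partial_{tt} u - \vep \Delta_g u + \rho u = 0$ in $Q_T$ with Robin conditions $-\partial_t u = \delta u$ on $\Sigma_T$ and $-\partial_t u + \delta u = 0$ on $\Sigma_0$, whose unique solution is $u \equiv 0$; at $\sigma = 1$ I recover \rife{elliptic:2nd_order:app}. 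It then suffices to show that $S := \{\sigma \in [0,1] : P_\sigma \text{ admits a } C^{2,\alpha}(\overline{Q}_T) \text{ solution}\}$ is both open and closed, hence all of $[0,1]$.

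The heart of the argument is a chain of uniform a priori estimates along the family. For the $L^\infty$ bound I would use the maximum principle: at an interior maximum, $\nabla u = 0$ and $\partial_t u = 0$ reduce the equation to $-\partial_{tt} u - \vep \Delta_g u + \rho u = 0$, so nonpositivity of the second derivatives forces $\rho u \leq 0$ there; at a maximum attained on $\Sigma_T$ (resp.\ $\Sigma_0$), the sign of $\partial_t u$ combined with the boundary condition yields $\delta u \leq \vep \|\log m_1 + V\|_\infty$ (resp.\ $\vep \|\log m_0 + V\|_\infty$), giving a bound on $\|u\|_\infty$ depending on $\delta^{-1}$ but uniform in $\sigma$. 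The Lipschitz bound $\|\overline{\nabla} u\|_\infty \leq C(1 + \|u\|_\infty)$ is furnished by Theorem \ref{thm:Lip_u_par}, with $C$ independent of $\rho, \delta$ and $\sigma$. Once $|\overline{\nabla} u|$ is controlled, the frozen matrix $\mathcal{A}(x, \sigma \nabla u)$ has bounded continuous coefficients and keeps its $\vep$-coercivity in the spatial directions, so the equation becomes a uniformly elliptic quasilinear equation on the cylinder $\overline{Q}_T$; De Giorgi--Nash--Moser theory together with boundary regularity for oblique derivative problems yields $C^{1,\alpha}(\overline{Q}_T)$ estimates, and Schauder estimates bootstrap these to $C^{2,\alpha}(\overline{Q}_T)$.

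Openness of $S$ I would establish via the implicit function theorem applied to the map $(\sigma, u) \mapsto P_\sigma[u]$ between suitable Hölder spaces: the Fréchet derivative at a solution $u_\sigma$ is a linear elliptic operator with zero-order coefficient $\rho > 0$ in the bulk and oblique derivative boundary conditions with zero-order coefficient $\delta > 0$, which is an isomorphism by classical theory for such linear problems, so solutions persist for $\sigma'$ near $\sigma$. Closedness of $S$ follows at once from the uniform $C^{2,\alpha}$ estimates via Arzelà--Ascoli. The main obstacle I would expect is precisely the non-uniform ellipticity of $\mathcal{A}(x, \eta)$ as $|\eta| \to \infty$: the $C^{1,\alpha}$ step is only available after taming the gradient, which is why the delicate gradient bound of Theorem \ref{thm:Lip_u_par} is the pivotal input; once it is in hand, classical linear elliptic regularity takes over. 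Having produced a classical $u \in C^{2,\alpha}(\overline{Q}_T)$, one finally recovers the density via $m = \exp\bigl(\vep^{-1}(-\partial_t u + \tfrac{1}{2}|\nabla u|^2) - V\bigr)$, closing the proof.
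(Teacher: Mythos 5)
Your overall architecture (continuity method, $L^\infty$ via the maximum principle, the gradient bound of Theorem~\ref{thm:Lip_u_par} feeding into elliptic regularity, openness via linearization, closedness via compactness) is the same as the paper's, but the continuation path you choose introduces a real gap. You freeze the coefficient matrix as $\mathcal{A}(x,\sigma\nabla u)$ and scale the quadratic and boundary terms by $\sigma$, and then invoke Theorem~\ref{thm:Lip_u_par} to get the uniform Lipschitz bound. But Theorem~\ref{thm:Lip_u_par} is proved for solutions of the \emph{specific} operator $-\operatorname{tr}(\mathcal{A}(x,\nabla u)\circ\overline{\nabla}^2u)+\rho u+\vep\nabla u\scalg\nabla V$; its proof rests on the Bernstein identities of Lemma~\ref{lemma:calc}, which are tied to the exact algebraic structure of $\mathcal{A}(x,\nabla u)$ coming from the HJ/continuity system. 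For the modified operator $\mathcal{A}(x,\sigma\nabla u)$ those identities no longer hold, so the theorem does not apply to your $\sigma$-family and the pivotal gradient bound is unsupported. This is exactly the difficulty the paper's choice of family is built to avoid: it keeps the interior PDE unchanged and only scales the boundary data, taking $P[u]=(1-\tau)P[0]$ (equivalently replacing $\log m_i+V$ by $\tau(\log m_i+V)$), so Theorem~\ref{thm:Lip_u_par} applies verbatim to every $\tau$, with constants controlled uniformly because $\tau\le1$.

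There is a way to salvage your family: the substitution $v=\sigma u$ converts your $\sigma$-problem into the paper's $\tau$-problem with $\tau=\sigma^2$ (the interior equation for $v$ is the unscaled one and the boundary data for $v$ is scaled by $\sigma^2$). With this observation Theorem~\ref{thm:Lip_u_par} can be applied to $v$ and then translated back; but you do not make this observation, and even with it the back-translation degenerates as $\sigma\to0$ (the bound on $\nabla u=\sigma^{-1}\nabla v$ blows up), so the uniformity over $\sigma\in[0,1]$ would still need an extra argument near $\sigma=0$. In short: either use the paper's family directly, or make the change of variables explicit and handle the $\sigma\to0$ degeneracy, or redo the Bernstein computation of Lemma~\ref{lemma:calc} for the modified operator. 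As written, the step where you cite Theorem~\ref{thm:Lip_u_par} is a gap. The remaining ingredients of your proof (maximum principle for $\|u\|_\infty$ depending on $\delta^{-1}$, Schauder bootstrap after the gradient is tamed, openness through the invertibility of the linearization with zero-order coefficients $\rho,\delta>0$, recovery of $m$ at the end) match the paper's argument and are fine.
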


We will prove  such result by means of the method of continuity. For convenience of notation, we set $\vep=1$ in what follows. We consider the differential operators  $F:C^{2,\alpha}(\overline Q_T)\rightarrow C^\alpha(\overline Q_T)$ and $G:C^{2,\alpha}(\overline Q_T)\rightarrow C^{1,\alpha} (\Sigma_0\cup\Sigma_T)$ defined by
\begin{align*}
    F[u]&\coloneqq-tr\left(\mathcal{A}(x,\nabla u)\circ\overline{\nabla}^2u\right)+\rho u+\nabla u\scalg \nabla V(x)\\
    G[u]&\coloneqq\begin{cases}-\partial_t u+\frac{1}{2}\abs{\nabla u}^2-\delta u- \log(m_1)-V(x) &\text{in $\Sigma_T$}\\
    -\partial_t u+\frac{1}{2}\abs{\nabla u}^2+\delta u- \log(m_0)-V(x) &\text{in $\Sigma_0$.}\\       
    \end{cases}
\end{align*}
Finally we define the operator
\begin{equation*}    
\begin{array}{rcl}
    P:C^{2,\alpha}(\overline Q_T) & \longrightarrow & C^\alpha(\overline Q_T)\times C^{1,\alpha} (\Sigma_0\cup\Sigma_T)\\
    u &\longrightarrow &\left(F[u],G[u]\right) 
\end{array}\end{equation*}
and the set 
\begin{equation*}
    E\coloneqq\Set{u\in C^{2,\alpha}(\overline Q_T)|\exists \tau\in[0,1]\quad s.t.\quad P[u]=(1-\tau)P[0]}
\end{equation*}
We note that $0\in E$ and that if a function $u\in C^{2,\alpha}(\overline Q_T)$ satisfies $P[u]=0$ then it is a solution for the elliptic problem \eqref{elliptic:2nd_order:app}.
{ \begin{lemma}
\label{lemma:E_bounded}
    The set $E$ is bounded in $C^{1,\alpha}(\overline Q_T)$.
\end{lemma}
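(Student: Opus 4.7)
The plan is to bound $u\in E$ successively in $L^\infty$, in $W^{1,\infty}$, and finally in $C^{1,\alpha}$. Note that every $u\in E$ solves a problem of the form \eqref{elliptic:2nd_order:par} with boundary data $\tau(\log m_i+V)$ for some $\tau\in[0,1]$, whose $W^{1,\infty}$-norm is uniformly controlled. For the $L^\infty$ bound I invoke the maximum principle: at a maximum point of $u$ interior to $Q_T$ we have $\overline{\nabla}u=0$ and the space-time Hessian is negative semidefinite, so $-tr(\mathcal{A}(x,0)\circ\overline{\nabla}^2u)\geq 0$ by positive definiteness of $\mathcal{A}(x,0)$, and the interior equation forces $\rho u\leq 0$. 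If instead the maximum is attained on $\Sigma_T$ (resp.\ $\Sigma_0$) at some $x_0\in M$, then $\nabla u(x_0)=0$ and $\partial_tu\geq 0$ (resp.\ $\leq 0$), so the corresponding boundary condition gives $\delta u\leq\tau\|\log(m_1)+V\|_\infty$ (resp.\ $\leq\tau\|\log(m_0)+V\|_\infty$). Together with a symmetric argument at the minimum, this yields $\|u\|_\infty\leq C/\delta$, with $C$ depending only on $\|\log m_0\|_\infty,\|\log m_1\|_\infty,\|V\|_\infty$, uniformly in $\rho$ and $\tau$.

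Next, I apply Theorem~\ref{thm:Lip_u_par} directly to $u\in E$: its constant depends only on $W^{1,\infty}$-norms of $\log m_i$ and on $\|V\|_{W^{2,\infty}}$, and is independent of $\rho$ and $\delta$, so we obtain $\|\overline{\nabla}u\|_\infty\leq C(1+\|u\|_\infty^2)$, and combined with Step 1 this bounds $E$ in $W^{1,\infty}(\overline Q_T)$. With this in hand, the expanded equation \eqref{elliptic:long:app} is a uniformly elliptic quasilinear PDE in $\overline Q_T$, whose symbol $\mathcal{A}(x,\nabla u)$ now has coefficients that are uniformly Lipschitz in the space-time variables. Classical De Giorgi--Nash--Moser, applied to the linear equations satisfied by the directional derivatives of $u$, yields interior $C^{0,\alpha}$ regularity for $\overline{\nabla}u$ in $Q_T$; near $\Sigma_0\cup\Sigma_T$, the oblique boundary conditions express $\partial_tu$ as a smooth function of $u$, $\nabla u$ and the $C^{1,\alpha}$-data $\tau(\log m_i+V)$, so standard boundary Schauder-type estimates for quasilinear elliptic equations (see e.g.\ Ladyzhenskaya--Ural'tseva) extend the $C^{1,\alpha}$ regularity up to $\overline Q_T$, uniformly for $u\in E$.

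The main obstacle is that the quasilinear operator is \emph{not} uniformly elliptic without an a priori gradient bound, because the symbol $\mathcal{A}(x,\nabla u)$ grows quadratically in $\nabla u$; the Lipschitz estimate of Theorem~\ref{thm:Lip_u_par} is the crucial input that reduces the problem to the uniformly elliptic regime, where the standard regularity machinery applies with constants independent of $\tau\in[0,1]$. The parameters $\rho,\delta>0$ are fixed throughout the continuity argument, so their presence only affects the constants and is harmless for the method of continuity, which only requires boundedness of $E$ in $C^{1,\alpha}$ at fixed $\rho,\delta$.
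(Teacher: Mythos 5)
Your proposal follows essentially the same three-step route as the paper: an $L^\infty$ bound via the maximum principle (which is exactly \eqref{deltau}), a $W^{1,\infty}$ bound from Theorem~\ref{thm:Lip_u_par} with constants independent of $\tau$, and then a $C^{1,\alpha}$ bound from classical quasilinear elliptic regularity, localized on a finite atlas, with interior estimates from Gilbarg--Trudinger/Ladyzhenskaya--Ural'tseva and oblique-derivative boundary estimates à la Lieberman. One small inaccuracy: the $W^{1,\infty}$ bound makes the operator uniformly elliptic but does \emph{not} make the coefficients $\mathcal{A}(x,\nabla u)$ Lipschitz in $(t,x)$ (that would require $\nabla u$ to already be Lipschitz); what actually feeds the regularity machinery is the smooth quasilinear dependence of $\mathcal{A}$ on $(x,p)$ combined with the uniform ellipticity, not Lipschitz coefficients.
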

\begin{proof}
    In the expanded form, given $\tau\in [0,1]$, the problem $P[u]=(1-\tau)P[0]$ is
    \begin{equation}
\begin{cases}
\label{app:syst:tau:NL}
    -tr\left(\mathcal{A}(x,\nabla u)\circ\overline{\nabla}^2u\right)+\rho u+\nabla u\scalg \nabla V(x)=0 &\text{in $Q_T$}\\
    -\partial_t u+\frac{1}{2}\abs{\nabla u}^2=\delta u+\tau\left( \log(m_1)+V(x)\right) &\text{in $\Sigma_T$}\\
    -\partial_t u+\frac{1}{2}\abs{\nabla u}^2+\delta u= \tau\left( \log(m_0)+V(x)\right) &\text{in $\Sigma_0$}\\
\end{cases}
\end{equation}
 So by Theorem \ref{thm:Lip_u_par}, \rife{deltau}, and $\tau\le1$ we get
\begin{align*}
    \norma{u}_{W^{1,\infty}}&\le C( \de,\norma{\tau  \log(m_0)}_{W^{1,\infty}},\norma{\tau  \log(m_1)}_{W^{1,\infty}},\norma{\tau V}_{W^{2,\infty}})\\
    &\le C(\de,\norma{ \log(m_0)}_{W^{1,\infty}},\norma{  \log(m_1)}_{W^{1,\infty}},\norma{V}_{W^{2,\infty}})\,.
\end{align*}
Moreover, since $V\in W^{2,\infty}$, we get that the coefficients of the elliptic problem $P[u]=(1-\tau)P[0]$ are $C^{0,\alpha}$ in the interior and $C^{1,\alpha}$ on the boundary with respect to $(t,x)$,  independently of $\tau$.\\
Fixed any local system of coordinates on $ M $, we recall that the second covariant derivatives of $\psi$ are
%, which are the coordinates of $\nabla^2\psi$, are
\begin{equation*}
    \nabla_{ij}\psi=\frac{\partial^2}{\partial x_i\partial x_j}\psi-\sum_{k}\Gamma_{ij}^k\frac{\partial}{\partial x_k}\psi
\end{equation*}
where the $\Gamma_{ij}^k$ are the Christoffel symbols.

Hence, if we localize the differential system \eqref{app:syst:tau:NL}, we get a differential problem on $\R^d$ which differs from \eqref{app:syst:tau:NL} only in the first order terms (because of the Christoffel symbols, which are smooth), so an elliptic problem with H\"older coefficients  in their arguments. Therefore, we can apply the classical results on Schauder estimates (e.g. \cite{gilbarg2001elliptic}, and \cite[Lemma 2.4]{lieb84NA} for the boundary estimates) 
%on the localizations of the differential system \eqref{app:syst:tau:NL} along 
in every local chart of a finite atlas ($ M $ is compact).
We obtain a global $C^{1,\alpha}(\overline Q_T)$ estimate on $u$, independent of $\tau$.
\end{proof}
}
We now observe that, thanks to Lemma 17.29 in \cite{gilbarg2001elliptic},  we have that $E$ is closed in $C^{2,\alpha}(\overline Q_T)$ and that the set
\begin{equation*}
S\coloneqq\Set{\tau\in[0,1]|\exists u\in C^{2,\alpha}(\overline Q_T)\quad s.t.\quad P[u]=(1-\tau)P[0]}    
\end{equation*}
is closed. Note that $S$ is not empty because $0\in S$.
We want to prove that $S$ is also open, so that it will coincide with $[0,1]$. To this purpose, we  prove that  for each $u\in E$ the linear differential system induced by the Gateaux derivative of $P$ has one and only one solution.

\begin{lemma}
Given $u\in E$, $\phi\in C^{0,\alpha}(\overline Q_T), \zeta_1,\zeta_0\in { C^{1,\alpha}( M )}$ there exists one and only one solution $\psi\in C^{2,\alpha}(\overline Q_T)$ of the linear problem 
\begin{equation}
\label{app:linear_diff_prob}
    \begin{cases}
        \begin{aligned}
            -\partial_{tt}\psi -  \Delta_g \psi&-(\nabla^2\psi)(\nabla u, \nabla u)+2\nabla (\partial_t\psi) \scalg \nabla u+\\
            &+[2\nabla (\partial_tu) -\nabla \left(|\nabla u|^2\right)+\nabla V]\scalg \nabla \psi+\rho\psi=\phi
        \end{aligned} & \text{in $Q_T$}\\
        -\partial_t \psi+2\nabla u\scalg \nabla \psi-\delta \psi=\zeta_1 &\text{on $\Sigma_T$}\\
        -\partial_t\psi +2\nabla u\scalg \nabla \psi+\delta \psi=\zeta_0 &\text{on $\Sigma_0$.}\\
    \end{cases}
\end{equation}
\end{lemma}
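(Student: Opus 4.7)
The plan is to read \eqref{app:linear_diff_prob} as a linear oblique boundary value problem in the cylinder $\overline{Q}_T$, which is uniformly elliptic since $u$ is $C^{2,\alpha}$, and to apply classical Schauder theory together with a maximum-principle uniqueness argument. Concretely, since $u\in C^{2,\alpha}(\overline{Q}_T)$ has $\nabla u$ bounded, the interior principal part $-\operatorname{tr}(\mathcal{A}(x,\nabla u)\circ\overline{\nabla}^2\psi)$ is uniformly elliptic (the symmetric matrix $\mathcal{A}(x,\nabla u)$ has determinant $\vep^d$ and trace $1+|\nabla u|^2+d\vep$, so a positive lower bound on its eigenvalues depends only on $\vep$, $d$ and $\|\nabla u\|_\infty$), and the first-order coefficients lie in $C^{0,\alpha}(\overline{Q}_T)$. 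On $\Sigma_T$ the boundary direction $-\partial_t+2\nabla u\scalg\nabla$ has component $-1\ne 0$ along the outward normal $(1,0,\dots,0)$, and analogously on $\Sigma_0$ the component is $+1$; both boundary operators are therefore oblique with $C^{1,\alpha}$ coefficients.

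Uniqueness follows from the maximum principle and Hopf's lemma. Assume $\phi=\zeta_0=\zeta_1=0$ and abbreviate the interior equation as $L\psi+\rho\psi=0$. Suppose $\psi$ attains a positive maximum $M$ at some $x_0\in\overline{Q}_T$. In a neighborhood of $x_0$ we have $\psi>0$ and $L\psi=-\rho\psi<0$, so $\psi$ is a strict subsolution of $L$ there. An interior positive maximum is excluded (at such a point $L\psi\ge 0$), hence $x_0\in\Sigma_0\cup\Sigma_T$, say $x_0=(T,\bar x)$. Compactness of $M$ forces $\nabla\psi(T,\bar x)=0$, so the boundary condition reduces to $-\partial_t\psi=\delta M>0$, giving $\partial_t\psi(T,\bar x)<0$. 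Yet Hopf's lemma applied to $\psi$ as a subsolution of $L$ at its boundary maximum forces $\partial_t\psi(T,\bar x)>0$ (the outward normal at $\{t=T\}$ being $+\partial_t$), a contradiction; the case $x_0\in\Sigma_0$ is symmetric, and applying the same reasoning to $-\psi$ rules out negative minima, so $\psi\equiv 0$.

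Existence is obtained by running the continuity method once more, now at the linear level. We interpolate between the operator of \eqref{app:linear_diff_prob} and a reference operator $L_0\psi=-\partial_{tt}\psi-\Delta_g\psi+\rho\psi$ with boundary operators $-\partial_t\psi-\delta\psi$ on $\Sigma_T$ and $-\partial_t\psi+\delta\psi$ on $\Sigma_0$, via $\mathcal{L}_s=(1-s)L_0+sL$ for $s\in[0,1]$. Each $\mathcal{L}_s$ preserves uniform ellipticity, obliqueness and the Hölder regularity of its coefficients, so the uniqueness argument above applies, and a uniform bound $\|\psi\|_\infty\le C(\|\phi\|_\infty+\|\zeta_0\|_\infty+\|\zeta_1\|_\infty)$ follows from linear barriers built from the positive constants $\rho,\delta$. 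The global Schauder estimate for oblique boundary value problems (cf.\ \cite{gilbarg2001elliptic}, Ch.~6), applied in a finite atlas of coordinate charts on $M$ (the Christoffel corrections only introduce smooth lower-order terms), then yields
\[
\|\psi\|_{C^{2,\alpha}(\overline{Q}_T)}\le C\bigl(\|\phi\|_{C^{0,\alpha}}+\|\zeta_0\|_{C^{1,\alpha}(\Sigma_0)}+\|\zeta_1\|_{C^{1,\alpha}(\Sigma_T)}+\|\psi\|_\infty\bigr),
\]
with constants uniform in $s\in[0,1]$. The set of $s$ for which the problem is $C^{2,\alpha}$-solvable is then closed by the a priori estimate and open by the Banach isomorphism theorem; since $s=0$ is standardly solvable, we conclude at $s=1$. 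The main technical difficulty in this plan is controlling the global Schauder estimate up to the boundary components $\Sigma_0\cup\Sigma_T$ uniformly along $\mathcal{L}_s$, which reduces by localization to the classical Euclidean oblique-derivative estimates.
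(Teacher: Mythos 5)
Your proof is correct and follows essentially the same route as the paper: uniqueness by the maximum principle, existence by the method of continuity at the linear level, with uniform global Schauder estimates for the oblique problem obtained by localizing in a finite atlas. The only (inessential) difference is your choice of reference problem — you keep the $-\partial_t\psi\mp\delta\psi$ boundary operators, so obliqueness is uniform along the homotopy and the endpoint is a coercive Robin problem, whereas the paper interpolates down to the boundary operators $\mp\delta\psi$ and solves a Dirichlet-type Poisson problem at $\tau=0$.
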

\begin{proof}
The uniqueness of the solution follows from the maximum principle applied to the homogeneus system ($\zeta_1=\zeta_0=\phi=0$), thanks to the linearity of the problem.

We now prove the existence of a solution. Let $L:C^{2,\alpha}(\overline Q)\rightarrow C^{0,\alpha}(\overline Q)\times { C^{1,\alpha}( M )^2} $ defined by the left side of system \eqref{app:linear_diff_prob}, in other words a function $\psi\in C^{2,\alpha}$ solves \eqref{app:linear_diff_prob} if and only if $L[\psi]=(\phi,\zeta_1,\zeta_0).$

Let $L_0:C^{2,\alpha}(\overline Q)\rightarrow C^{0,\alpha}(\overline Q)\times { C^{1,\alpha}( M )^2}$ be the operator given by 
\begin{equation*}
    L_0[\psi]=(-\partial_{tt}\psi -\Delta_g\psi+\rho \psi,-\delta\psi,\delta\psi)
\end{equation*}
and, for every $\tau\in[0,1]$,
\begin{equation*}    
\begin{array}{rcl}
    L_\tau:C^{2,\alpha}(Q) & \longrightarrow & C^\alpha(Q)\times {C^{1,\alpha}}( M )^2\\
    \psi &\longrightarrow &\tau L[\psi]+(1-\tau)L_0[\psi]. 
\end{array}\end{equation*}
so the expanded form of the problem $L_\tau[\psi]=(\phi,\zeta_1,\zeta_0)$ is
\begin{equation}
    \label{app:syst:tau}\begin{cases}
        \begin{aligned}
            -\partial_{tt}\psi-&\Delta_g\psi -\tau(\nabla^2\psi)(\nabla u, \nabla u)+2\tau \nabla (\partial_t\psi)\scalg \nabla u+\\
            &+\tau[2\nabla (\partial_t u)-\nabla \left(|\nabla u|^2\right)+\nabla V]\scalg \nabla \psi+\rho\psi=\phi
        \end{aligned} & \text{in Q}\\
        -\tau\partial_t\psi+2\tau \nabla u\scalg \nabla \psi-\delta \psi=\zeta_1 &\text{on $\Sigma_T$}\\
        -\tau\partial_t\psi+2\tau \nabla u\scalg \nabla \psi+\delta \psi=\zeta_0 &\text{on $\Sigma_0$.}\\
    \end{cases}
\end{equation}
%We recall that in chart the second covariant derivatives of $\psi$, which are the coordinates of $\nabla^2\psi$, are
%\begin{equation*}
%    \nabla_{ij}\psi=\frac{\partial^2}{\partial x_i\partial x_j}\psi-\sum_{k}\Gamma_{ij}^k\frac{\partial}{\partial x_k}\psi
%\end{equation*}
%where the $\Gamma_{ij}^k$ are the Christoffel symbols.
As in Lemma \ref{lemma:E_bounded}, we observe that 
if we write   system \eqref{app:syst:tau} in the coordinates of  any local chart, we obtain a uniformly  elliptic problem.
% we get a differential problem on $\R^d$ which differs from \eqref{app:syst:tau} only on the first order terms (because of the Christoffel symbols), so an elliptic problem.
Hence, on every differential problem induced by the localizations of a finite atlas ($ M $ is compact), we can, by ellipticity, apply Theorem  6.17 or Theorem 6.30 of \cite{gilbarg2001elliptic} (the former for $t\in(0,T)$, the latter for $t\in\{0,T\} $). We obtain the global bound
\begin{equation*}
    \norma{\psi}_{C^{2,\alpha}(  \overline Q )}\le C(\norma{\psi}_{C^{\alpha}(\overline Q )}+\norma{\phi}_{C^{\alpha}(\overline Q )}+\norma{\zeta_1}_{{ C^{1,\alpha}}( M )}+\norma{\zeta_0}_{{ C^{1,\alpha}}( M )})
\end{equation*}
where $C$ is independent from $\tau$.
%
%By the maximum principle we get
%\begin{equation*}
%    \norma{\psi}_{C^{\alpha}(\overline Q_T )}\le\max(\frac{1}{\rho},\frac{1}{\delta})C(diam( M )) (\norma{\phi}_{C^{\alpha}(\overline Q_T )}+\norma{\zeta_1}_{C^{\alpha}( M )}+\norma{\zeta_0}_{C^{\alpha}( M )})
%\end{equation*}
We conclude that there exists a constant $C>0$, depending only on $\rho,\delta$ and $ M $, such that
\begin{equation*}
    \norma{\psi}_{C^{2,\alpha}(\overline Q)}\le C\norma{L_\tau[\psi]}_{C^{0,\alpha}(\overline Q)\times { C^{1,\alpha}( M )^2} }\quad\forall\tau\in[0,1],\,\forall\psi\in C^{2,\alpha}(Q)\,.
\end{equation*}
Thanks to the bound above, we can apply the method of continuity for linear elliptic differential problems and we find that, for each $\tau\in[0,1]$, the operator $L_\tau$ is surjective if and only if $L_0$ is surjective. This means that the system \eqref{app:linear_diff_prob} has solution if and only if there is a solution to the system
\begin{equation*}
    \begin{cases}
    -\partial_{tt}\psi-\Delta_g\psi+\rho\psi=\phi & \text{in Q}\\
    -\delta\psi=\zeta_1 & \text{on $\Sigma_T$}\\
    \delta\psi=\zeta_0 & \text{on $\Sigma_0$}.\\
    \end{cases}
\end{equation*}
This   is a standard Poisson problem, which admits a  unique solution since the differential operator $-\frac{\partial^2}{\partial^2 t}-\Delta_g+\rho$ is Fredholm of index zero and injective (see Proposition 1.8, Chapter 5 of \cite{taylor2010partial}). Moreover, by elliptic regularity,  the solution belongs to $C^{2,\alpha}(\overline Q)$.
\end{proof}

\section{Appendix B: proof of Lemma \ref{giuseppe-villani}.}

In this section we provide the proof of Lemma \ref{giuseppe-villani}, which we restate here for the reader's convenience.

\begin{lemma} Let $m_0\in L^1(M)$. Then there exists a function $U: [0,\infty)\to \R^+$ such that:

(i)  $U\in C^2(0,\infty)$,  is convex and satisfies $\frac{U(r)}r \mathop{\to}\limits^{r\to \infty} +\infty$.

(ii) $ P'(r)r - (1-\frac1d) P(r) \geq 0$, and $ P(r) \leq K\, r $  for every $ r>0$, and for some $K>0$, where $P(r)=U'(r)r-U(r)$.

(iii) $U(m_0) \in L^1(M)$.
\end{lemma}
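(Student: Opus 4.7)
The plan is to combine the De la Vall\'ee Poussin lemma with an explicit parametrization of the displacement-convexity class via the ``pressure'' $P$, in the spirit of Villani's \emph{Optimal Transport: Old and New}, Proposition~17.7.

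First, by the De la Vall\'ee Poussin lemma applied to $m_0\in L^1(M)$, there exists a nondecreasing function $h^*:[0,\infty)\to[0,\infty)$ with $h^*(r)\to\infty$ and $\int_M m_0\, h^*(m_0)\,dx<\infty$; by truncating against $\log(e+r)$ if necessary, I may additionally assume $h^*(r)\le\log(e+r)$ for all $r\ge 0$.

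Second, I will parametrize $U$ by its pressure: writing $P(r)=r^{(d-1)/d}\psi(r)$, the identity $(U/r)'=P/r^2$ integrates to
\begin{equation*}
U(r)=c_0\, r+r\int_1^r\frac{\psi(s)}{s^{1+1/d}}\,ds,\qquad r\ge 1,
\end{equation*}
extended smoothly and nonnegatively on $[0,1)$. Direct computation shows that (i)--(ii) reduce to three conditions on $\psi$: $\psi\ge 0$ nondecreasing (giving convexity and the McCann condition $P'(r)r\ge(1-1/d)P(r)$), $\psi(r)\le Kr^{1/d}$ (giving $P\le Kr$), and $\int_1^\infty \psi(s)s^{-1-1/d}\,ds=+\infty$ (giving the superlinearity of $U$). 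Making the ansatz $\psi(s)=s^{1/d}/\omega(s)$ with $\omega:[1,\infty)\to[1,\infty)$ nondecreasing and $\omega(s)\le Cs^{1/d}$ (the latter to keep $\psi$ nondecreasing), the first two properties hold by construction and the third becomes $\int_1^\infty(s\omega(s))^{-1}\,ds=+\infty$. Via the layer-cake formula and the estimate $U'(r)\le c+1+\int_1^r(s\omega(s))^{-1}\,ds$, condition (iii) is then implied by the upper bound
\begin{equation*}
\int_1^r\frac{ds}{s\,\omega(s)}\le C_1\, h^*(r)+C_2,\qquad r\ge 1.
\end{equation*}

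Third, I will build $\omega$ from $h^*$. First, I replace $h^*$ by a smooth, concave, nondecreasing function $\tilde h$ with $\tilde h(r)\le C(1+h^*(r))$ and $\tilde h(r)\to\infty$, obtained as a (mollified) piecewise-linear interpolation through points $(r_n,h^*(r_n))$ along a sparse sequence $r_n\uparrow\infty$ chosen so that the successive slopes are nonincreasing. Then I set $\omega(s):=1/(s\,\tilde h'(s))$ for $s$ large, extended smoothly so that $\omega\ge 1$ everywhere. Concavity of $\tilde h$ gives $s\tilde h'(s)$ nonincreasing, whence $\omega$ is nondecreasing; since $\tilde h\le C(1+\log(e+s))$ forces $s\tilde h'(s)\le 1$ eventually, one has $\omega\ge 1$; by design, $\int_1^r(s\omega(s))^{-1}\,ds=\tilde h(r)-\tilde h(1)$, which delivers simultaneously the upper bound (via $\tilde h\le C(1+h^*)$) and the divergence (via $\tilde h\to\infty$); and $\omega(s)\le Cs^{1/d}$ follows because $\tilde h'(s)\gg s^{-1-1/d}$, a bound valid for any $\tilde h\to\infty$ growing no slower than some iterated logarithm, which we may always arrange by choosing $\tilde h$ above a fixed such minorant.

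The main obstacle is the tension between (ii), which forces $U(r)\lesssim r\log r$, and (iii), which—absent a finite-entropy assumption on $m_0$—typically requires $U$ to grow strictly slower than $r\log r$. The parametrization $\psi=s^{1/d}/\omega$ with $\omega$ slowly tending to infinity encodes exactly this intermediate regime, and the De la Vall\'ee Poussin function $h^*$ supplies the rate from which a compatible $\omega$ is built.
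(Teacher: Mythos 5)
Your overall strategy---parametrize $U$ by its pressure, writing $P(r)=r^{(d-1)/d}\psi(r)$ and integrating $(U/r)'=P/r^2$---is a nice and genuinely different route from the paper's. The paper first manufactures a superlinear $\Psi$ with the extra quantitative control $0\le\Psi''(s)\le C/s$ via a refined De la Vall\'ee Poussin construction, and then invokes Villani's Proposition 17.7, which passes through the \emph{lower convex envelope} of $\delta\mapsto\delta^N\Psi(\delta^{-N})$; that convexification step is precisely what enforces the $C^2$ regularity and the McCann inequality automatically, at the cost of then having to re-estimate $P$. In your direct parametrization those structural conditions become pointwise constraints on $\omega$ that you must enforce by hand, and this is where the argument breaks.

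The central claim ``Concavity of $\tilde h$ gives $s\tilde h'(s)$ nonincreasing, whence $\omega$ is nondecreasing'' is false. Concavity yields only that $\tilde h'$ is nonincreasing; it does not control the product $s\tilde h'(s)$. For instance $\tilde h(s)=s^\alpha$, $0<\alpha<1$, is concave but $s\tilde h'(s)=\alpha s^\alpha$ is \emph{increasing}; and for your mollified piecewise-linear $\tilde h$ with slopes $c_n$ on $[r_n,r_{n+1}]$, the quantity $s\tilde h'(s)\approx c_n s$ increases on each linear piece and drops only at the nodes, so $\omega(s)=1/(s\tilde h'(s))$ oscillates and is not nondecreasing. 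Since monotonicity of $\psi(s)=s^{1/d}/\omega(s)$---which requires $s\omega'(s)/\omega(s)\le 1/d$, hence at minimum $\omega$ nondecreasing---is exactly what produces $P'(r)r\ge(1-\tfrac1d)P(r)$, condition (ii) is not established as written. What your construction actually needs is the stronger property that $s\mapsto\tilde h(e^s)$ be concave (concavity in the logarithmic variable), which ordinary concavity of $\tilde h$ does not imply; alternatively one could post-process by setting $\omega(s):=\sup_{1\le t\le s}1/(t\tilde h'(t))$, but then the divergence $\int_1^\infty ds/(s\omega(s))=+\infty$ required for superlinearity of $U$ must be re-verified, and this is not automatic. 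Two secondary slips: ``$\tilde h\le C(1+\log(e+s))$ forces $s\tilde h'(s)\le 1$ eventually'' is also false (take $\tilde h=2\log s$), though that is fixable by rescaling; and ``$\omega(s)\le Cs^{1/d}$ keeps $\psi$ nondecreasing'' is not the correct condition, the correct one being the differential bound $s\omega'(s)/\omega(s)\le 1/d$.
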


\begin{proof} 
The proof consists of two steps, which rely on  two general facts: first of all, by the classical De la Vall\'ee Poussin criterion, any $m_0 \in L^1(M)$ is integrable for some 
superlinear function $\Psi$. Secondly, by \cite[Proposition 17.7]{Villani-oldnew}, given a superlinear function $\Psi$ one can build a new function $U$ which stays below and satisfies the conditions needed for the displacement convexity. However, for our purposes, it will be needed to detail the possible construction of $\Psi$  as well as  Villani's construction of $U$, in order to show that $P$ satisfies a suitable sublinear growth. This is why we detail the proof, for the reader's convenience.

{\bf Step 1.} \quad Here we follow and slightly modify, to our purposes, the proof of de la Vall\'ee Poussin's lemma proposed in \cite{canizo}. If we set $f(\lambda):={\rm meas}(\{ m_0>\lambda\})$, then $f\in L^1(0,\infty)$ and we can define the sequence
\be\label{an}
a_n:= \inf\{ t\,: \int_t^\infty f(\lambda) d\lambda <\frac1{n^2}\}\,.
\ee
We possibly increase $a_n$ by defining the sequence $x_n$ as 
$$
\begin{cases}
x_1=1 & \\
x_{n+1}= \max(2x_n, a_{n+1}+1) & 
\end{cases}
$$
Now we define  a piecewise linear function (which will play the role of $\Psi'$) as follows:
$$
\begin{cases}
\Phi(x)=x & x\in (0,1) \\
\Phi(x)= \Phi(x_n) + d_{n+1}(x-x_{n}) & \hbox{for $x\in (x_n, x_{n+1})$}\,\, n\geq 1 \end{cases}  
$$
where $d_n$ is defined as
$$
\begin{cases}
d_1=1\,, &  \\
d_{n+1}=\min\left(d_n, \frac1{x_{n+1}}, \frac{n+1-\Phi(x_n)}{x_{n+1}-x_n}\right) & 
\end{cases}
$$
We notice that $\Phi$ is increasing and concave, because $\{d_{n}\}$ is positive and decreasing. In addition, since $d_{n+1}\leq \frac1{x_{n+1}}\leq \frac1x$ for $x\in (x_n, x_{n+1})$, we deduce that $\Phi'(x)\leq \frac1x$ for $x\geq 1$. Moreover, by definition of $d_{n+1}$,   we have
\be\label{phileq}
\Phi(x)\leq  \Phi(x_n) + \frac{n+1-\Phi(x_n)}{x_{n+1}-x_n}(x-x_{n}) \leq n+1 \quad \hbox{for $x\in(x_n, x_{n+1})$.}
\ee
This implies that
$$
\int_{1}^\infty \Phi(\lambda)\, f(\lambda)d\lambda  \leq \, \sup_k \, \sum_{n=1}^k (n+1)\int_{x_n}^{x_{n+1}}  f(\lambda)d\lambda
$$
where the last sum is finite, since reordering summation and using $x_n> a_n$ defined by \rife{an}, we have  
\begin{align*}
 \sum_{n=1}^k (n+1)\int_{x_n}^{x_{n+1}}  f(\lambda)d\lambda
  & = 2\int_{x_1}^\infty  f(\lambda)d\lambda - (k+1)\int_{x_{k+1}}^\infty  f(\lambda)d\lambda + \sum_{n=2}^k \int_{x_n}^{\infty}  f(\lambda)d\lambda
\\ & \leq 2\| f\|_{L^1}+ \sum_{n=2}^k \frac1{n^2} \leq C\,.
% \sup_k \{ c \| f\|_{L^1} + \sum_{j=1}^k  \int_{x_j}^{\infty}  f(\lambda)d\lambda\} \\ & &\leq  \sup_k \{ c \| f\|_{L^1} + \sum_{j=1}^k \frac1{j^2}\} \leq C
\end{align*}
Hence $\Phi\, f \in L^1(1,\infty)$. Finally, we check that $\Phi(r)\mathop{\to}\limits^{r\to \infty}\infty$; for that, being $\Phi$ increasing, it is enough to show that $\Phi(x_n)\to \infty$. But we have
\be\label{telesco}
\Phi(x_{n+1})= 1+ \sum_{k=1}^n (\Phi(x_{k+1})-\Phi(x_k))= 1+ \sum_{k=1}^n  d_{k+1}(x_{k+1}-x_k) 
\ee
If $d_{k+1}= \frac1{x_{k+1}}$, then $d_{k+1}(x_{k+1}-x_k) = 1- \frac{x_{k}}{x_{k+1}} \geq \frac12$ by definition of $x_k$. Similarly, if $d_{k+1}= \frac{k+1-\Phi(x_k)}{x_{k+1}-x_k}$, then $d_{k+1}(x_{k+1}-x_k) =k+1-\Phi(x_k)\geq 1$ (see \rife{phileq}). Therefore,  by definition of $d_k$, we conclude that $\sum_{k=1}^n  d_{k+1}(x_{k+1}-x_k) $  do not converge if $d_{k+1} \neq d_k$ for infinitely many indexes; but this means that the only case left out is that $d_k$ is definitively constant after some $k_0$, and again this implies $\sum_{k=1}^n  d_{k+1}(x_{k+1}-x_k)\geq d_{k_0} (x_{n+1}- x_{k_0}) \to \infty$. We conclude that the sum in \rife{telesco} diverges, hence $\Phi(x_n)\to \infty$.

Therefore, we proved that $\Phi$ is a concave, positive, piecewise linear function, which is increasing and unbounded, and satisfying $\Phi'(x)\leq \frac1x$ for $x\geq 1$; in addition,  $\Phi(\lambda) f(\lambda) \in L^1(0,\infty)$. Now, one can smoothen $\Phi$ by convolution with some function $\zeta\in C^\infty$ which is supported in $(-\frac12, \frac12)$ and with unit mass. Namely, we define
$$
\Psi'(r):= \int_{\R} \Phi(r-s)\zeta(s) ds, \qquad \Psi(r)= \int_0^r \Psi'(s)ds\,.
$$
We notice that $\Psi'\geq 0$, is increasing and differentiable, and satisfies $\Psi''(r) \leq \frac c r$. Moreover, we have
$\Psi'(r)\leq \Phi(r+\frac12) \leq \Phi(r) + \Phi(\frac12)$ (because $\Phi$ is concave and sublinear). Hence we also have $\Psi'(\lambda) f(\lambda) \in L^1(0,\infty)$. This implies $\Psi(m_0) \in L^1(M)$, since 
$$
\int_M \Psi(m_0) = \int_0^{\infty} \Psi'(\lambda) f(\lambda)d\lambda  <\infty\,.
$$
Notice also that $\Psi$ is superlinear, because $\Psi'(r)\to \infty$ as $r\to \infty$, by the very same property of $\Phi$.

{\bf Step 2.}  \quad Given $\Psi$ built in the previous step, which is a superlinear function, now we apply  \cite[Proposition 17.7]{Villani-oldnew} which provides with a function $U\leq \Psi$ such that $U$ is convex and superlinear, belongs to $C^2(0,\infty)$, and satisfies $ P'(r)r - (1-\frac1d) P(r) \geq 0$,  where $P(r)=U'(r)r-U(r)$. All those properties are proved in \cite[Proposition 17.7]{Villani-oldnew}. In addition,  we also wish to show that $P(r)\leq C\, r$, and to this purpose we recall the construction of $U$ in Villani's book. Given $\Psi(r)$, let us define $u(r)= r^{N}\Psi(r^{-N})$, and let $\tilde u(r)$ be the lower convex envelope of $u$ (the sup of linear functions lying below $u$). The function $U$ is then defined as
$$
U(r)= r \tilde u( r^{-1/N})\,.
$$
Since $\tilde u\leq u$, one obviously have $U(r)\leq \Psi(r)$ by definition. Now, let us define
$$
a_r:= \sup_{0<\xi\leq r^{-1/N}} \left[ \frac{u((2r)^{-1/N})-u(\xi)}\xi\right]
$$
which is a finite number because $u(\xi)\mathop{\to}\limits^{\xi \to 0}\infty $. Since $u(\xi) \geq u((2r)^{-1/N})-a_r \xi$ for $\xi\leq r^{-1/N}$, by definition of the convex envelope the same inequality is satisfied replacing $u$ with   $\tilde u$;  when $\xi= r^{-1/N}$, this yields
\be\label{ar}
 u((2r)^{-1/N})-\tilde u(r^{-1/N}) \leq a_r \, r^{-1/N}\,.
 \ee
 We estimate now $a_r$; in fact, we have
 $$
 a_r = \sup_{\lambda\in [1, \infty)}\left[ \frac{u((2r)^{-1/N})-u((\lambda r)^{-1/N})}{(\lambda r)^{-1/N}}\right]
 $$ 
 where we can write, by definition of $u$,
 \begin{align*}
 \frac{u((2r)^{-1/N})-u((\lambda r)^{-1/N})}{(\lambda r)^{-1/N}} & = (\lambda r)^{1/N}\left [ \frac{\Psi(2r)}{2r}-\frac{\Psi(\lambda r)}{\lambda r} \right] 
 \\
&  
= (\lambda r)^{1/N} \int_{\lambda}^2 \left( \frac{\Psi(s)}s\right)'_{\mathop{|}_{s=tr}} r \, dt\,.
 \end{align*}
Now, by the construction in Step 1, we know that $0\leq \Psi''(s) \leq \frac C s$ for some constant $C>0$. This implies that $\frac{\Psi(s)}s$ is increasing and we have
$$
\left( \frac{\Psi(s)}s\right)'= \frac{\Psi'(s)s-\Psi(s)}{s^2} \leq \frac C s
$$
by Lagrange theorem. Therefore, for any $\lambda \geq 1$ we estimate
$$
\int_{\lambda}^2 \left( \frac{\Psi(s)}s\right)'_{\mathop{|}_{s=tr}} r \, dt 
%\leq  C \int_{\lambda\wedge 2 }^2 \frac 1t \, dt 
\leq C\, (\log 2) \, \1_{\{\lambda \in [1,2]\}}\,.
$$
  Coming back to $a_r$ we deduce that 
  $$
  a_r \leq C\, (\log2) (2r)^{1/N}
  $$
  and therefore, from \rife{ar}, we conclude that
\be\label{til}
  u((2r)^{-1/N})-\tilde u(r^{-1/N}) \leq \tilde C
 \ee
for some constant $\tilde C>0$. Now we can estimate $P(r)$; using the convexity of $U$ and  its definition, we have 
\begin{align*}
P(r) & = U'(r)r-U(r) \\ & \leq U(2r)-2 U(r) = 2r \tilde u ((2r)^{-1/N})-2r \tilde u(r^{-1/N})
\\ & \leq 2r (  u((2r)^{-1/N})-\tilde u(r^{-1/N}))
\end{align*}
where we used that $\tilde u\leq u$.  Using \rife{til} we conclude that $P(r) \leq 2 \tilde C \, r$. 
\end{proof}

{\bf Acknowledgments.} The authors are members of Gnampa research group of Indam. We acknowledge the support of the Excellence Project MatMod@TOV of the Department of Mathematics of the University of Rome Tor Vergata. We wish to warmly thank Giuseppe Savar\'e for fruitful discussions on the topics of the article.

\end{document}